%% file: main.tex
\documentclass[12pt]{article}

\usepackage[margin=1in]{geometry}
\usepackage{tikz}
\usepackage{xcolor} 
\usepackage{multirow}
\usepackage{amsmath,amssymb,amsthm}
\usepackage[colorlinks=true, allcolors=blue]{hyperref}
\usepackage[capitalize,nameinlink]{cleveref}
\usepackage{float}

\usepackage{dsfont} 
\usetikzlibrary{decorations.pathreplacing} 

\newtheorem{theorem}{Theorem}[section]
\newtheorem{proposition}[theorem]{Proposition}
\newtheorem{corollary}[theorem]{Corollary}
\newtheorem{lemma}[theorem]{Lemma}

\theoremstyle{definition}

\newtheorem{conjecture}{Conjecture}[section]
\newtheorem{definition}{Definition}[section]
\newtheorem{example}{Example}[section]

\input{paper-pics}

\usepackage{mathtools}

\DeclareMathOperator{\FR}{FR}

\usepackage[backend=biber, style=alphabetic, sorting=ynt]{biblatex}
\title{Enumerating Prime Patterns in Juggling Variations}
\author{Steve Butler\thanks{Iowa State University, Ames, IA 50011, USA. \href{mailto:butler@iastate.edu}{\texttt{butler@iastate.edu}}} 
\and Vera Choi\thanks{Tufts University, Medford, MA 02155, USA. \texttt{\href{mailto:vera.choi@tufts.edu}{vera.choi@tufts.edu}}}
\and Joel Jeffries\thanks{Iowa State University, Ames, IA 50011, USA. \href{mailto:joel.a.jeffries@gmail.com}{\texttt{joel.a.jeffries@gmail.com}}} 
\and Nina McCambridge\thanks{Carnegie Mellon University, Pittsburgh, PA 15213, USA. \href{mailto:nmccambr@andrew.cmu.edu}{\texttt{nmccambr@andrew.cmu.edu}}}
\and Asia Morgenstern \thanks{Westminster College, New Wilmington, PA 16172, USA. Current address: University of Kentucky, Lexington, KY 40506, USA. \href{mailto:asia.morgenstern@gmail.com}{\texttt{asia.morgenstern@gmail.com}}} 
\and Samuel Orellana Mateo\thanks{Duke University, Durham, NC 27708, USA. \href{mailto:samuelorellanamateo@gmail.com}{\texttt{samuelorellanamateo@gmail.com}}}}
\date{\empty}

\begin{document}

\maketitle

\begin{abstract}
    Juggling patterns can be mathematically modeled as closed walks within directed state graphs. In this paper, we present a unified framework of unbounded juggling patterns and its variations (including multiplex, colored, and passing) primarily through the formalism of the juggling state. By extending this state-based approach and utilizing combinatorial tools such as set partitions and filled Ferrers diagrams, we find and prove a new lower bound on the number of $b$-ball prime patterns with period $n$. Further, we determine exact counts for 2-ball multiplex, 1-ball passing, and 2-ball colored juggling patterns, as well as a lower bound for 2-ball passing. We also provide an extensive analysis of the asymptotic growth rates for these pattern counts. Finally, we formalize the infinite state graph, $G_\infty$, and utilize flip-reverse involutions to establish bijections between classes of prime patterns, exploring how fixing a specific state influences the enumeration of prime walks.
\end{abstract}


\newpage

\section{Introduction} \label{sec:introduction}

\subsection{State Graph}

Juggling, an art form celebrated for its physical dexterity, also has a rich mathematical structure. The connections between juggling and combinatorics are particularly strong, with the state graph as a formalism developed to describe and analyze juggling patterns. 

In this model, a juggling state is represented by a binary vector indicating the future landing times of the balls. For instance, in a $b$-ball pattern, a state is a vector with exactly $b$ ones. A $1$ in position $i$ signifies that a ball is scheduled to land $i$ time beats from the present.

A directed edge connects two states if one can legally transition to the other in a single beat. If the first entry is a $0$, then no ball will land in the next time beat, hence achieving the only possible transition by deleting that entry. If the first entry is a $1$, then a ball lands; we delete the first entry and we replace any $0$ by a $1$, representing a throw of the ball that just landed.

While Buhler et al. \cite{Buhler1994} formally model juggling patterns as permutations of integers $f: \mathbb{Z} \to \mathbb{Z}$ where $f(t) \ge t$, we rely entirely on the state graph formalism. 

Formally, given a number of balls $b$, if $\alpha = \langle \alpha_1, \alpha_2, \dots \rangle$ and $\beta = \langle \beta_1, \beta_2, \dots \rangle$ are two states where $\alpha_i, \beta_j \in \{0, 1\}$ for all $i, j$, then $\alpha \to \beta$ is a possible transition if and only if $\alpha_{i+1} \leq \beta_i$ for all $i = 1, 2, \dots$ and
\[
\sum_{i=1}^\infty \alpha_i = \sum_{i=1}^\infty \beta_i = b
\]

A periodic juggling pattern of length $n$ corresponds to a closed walk of length $n$ in this infinite state graph. This is a subgraph for $b = 2$ drawn by \cite{Banaian2016}:

\begin{figure}[ht]
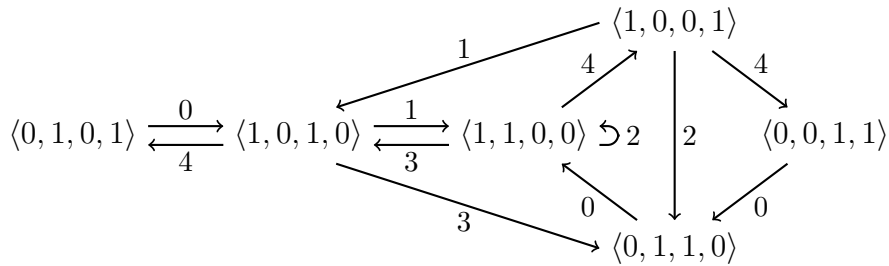

\begin{center}
\introstategraph
\caption{A subgraph of the two-ball state graph. The edge labels correspond to throw heights.}
\label{fig:state}
\end{center}
\end{figure}

While the total number of juggling patterns of a given length is well-understood, a more challenging  problem is to enumerate the \textbf{prime juggling patterns}. A pattern is defined as prime if its corresponding closed walk in the state graph is a cycle, meaning it visits $n$ distinct states before returning to the start. These prime patterns are fundamental, as any juggling pattern can be decomposed into a sequence of them. This concept is identical to what Polster \cite{Polster2003} refers to as ``prime loops'' or what Banian et al. \cite{Banaian2016} define as ``prime cycles''. While Polster mainly bounds state graphs by a maximum throw height $h$, our definitions allow for unbounded heights. Furthermore, this differentiates them from the ``primitive'' sequences enumerated by Butler and Graham \cite{Butler2010}, which only require the starting state to not be revisited.

\subsection{Work on prime patterns and formalization}

Significant progress on this problem was made in the paper \textit{Counting prime juggling patterns} by Banaian, Butler, et al. \cite{Banaian2016}, which is a foundational reference for our work. For the case of two balls ($b=2$), they established a bijection between prime patterns of length $n$ and ordered collections of sets of ``spacings'' (the distance between the two balls in the air). The largest elements of these sets form a partition of $n$ into distinct parts. This connection allowed them to derive a formula for $P'(2,n)$, the number of two-ball prime patterns of length $n$.

This paper looks at prime patterns by defining a bijection between them and series of sets with certain conditions, one being that it is cyclic (such that $S_1, S_2, S_3$ represents the same pattern as $S_2, S_3, S_1$). The proposition they use, which they don't explicitly state, but lies on their argument for the main result, is the following:

\begin{lemma}\label{lem:pattern-sets-bijection}
    Let $\mathcal{X}_{t,n}$ the set of all $X_{t,n} = (S_1, \dots, S_t)$ (up to cyclic order), where
    \[
        n = \sum_{i=1}^t \max(S_i)
    \]
    and $S_i \cap S_j = \varnothing$ for all $1 \leq i < j \leq t$. This is possibly empty if $t$ is not large enough.

    Then, there is a bijection between the set of prime patterns $\rho$ of length $n$ that contain $t$ $C_2$ cards in their construction, and elements in $\mathcal{X}_{t,n}$.
\end{lemma}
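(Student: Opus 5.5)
The plan is to make the bijection explicit through the two-ball states, following the construction of \cite{Banaian2016}. A two-ball state is determined by the landing times $a<b$ of its two balls. In a closed walk, every beat is one of three kinds:
\begin{itemize}
\item a shift ($a>1$, no ball lands), giving $(a,b)\to(a-1,b-1)$;
\item a $C_1$-throw ($a=1$, with the landing ball thrown below the other ball), giving $(1,b)\to(h,b-1)$ with $h<b-1$;
\item a $C_2$-throw ($a=1$, with the landing ball thrown over the other ball), giving $(1,b)\to(b-1,h)$ with $h>b-1$.
\end{itemize}
The key invariant is the spacing $d=b-a$. Shifts preserve $d$, and a state whose first entry is $1$ is determined by its spacing alone, namely $\langle 1,0,\dots,0,1\rangle$ with the second $1$ in position $d+1$.

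First I cut the cycle $\rho$ at its $C_2$ throws into $t$ consecutive blocks, one block for each $C_2$ card. Within a block, the ball that is currently higher is never overtaken. So the block is a sequence of $C_1$-throws and shifts, during which the higher ball descends steadily while the lower ball is rethrown beneath it. To the $i$-th block I attach the set $S_i$ of spacings $d$ of the states $\langle 1,0,\dots,0,1\rangle$ visited in that block, i.e.\ the spacings at the moments a ball lands.

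Next I check the length identity. The block begins just after a $C_2$ throw, with the higher ball at height $m$. That ball descends one unit per beat until it is the one that lands, at which point the next $C_2$ throw ends the block. So the block has length exactly $m$. The first landing state in the block has the largest spacing, and I want to show this spacing equals $m$. This needs a careful offset convention (whether one counts $b-1$ or $b$), which I would fix once so that $\max(S_i)$ equals the block length. Summing over blocks then gives $n=\sum_i \max(S_i)$.

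Primality corresponds to disjointness, and this I expect to be the main obstacle. If two landing states in $\rho$ had the same spacing, they would be the same state, contradicting that $\rho$ is a cycle. Hence the $S_i$ are pairwise disjoint, and within a block the spacings are distinct automatically because they strictly decrease. The converse is the delicate part. I must show that distinct spacings force all $n$ states to be distinct, including the shift states $(a,b)$ with $a>1$. Such a state determines the next landing state by shifting down, and that shift is unique, so a repeated state would force a repeated landing state. I also need the decreasing structure within a block to rule out repeats of intermediate states inside the block.

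For the inverse map, given $(S_1,\dots,S_t)$ I rebuild each block. I start with the higher ball at height $\max(S_i)$, and at each element of $S_i$ in decreasing order I throw the landing ball to the height that produces the next spacing; the largest element is reached by the $C_2$ throw that starts the block. Concatenating the blocks cyclically gives a closed walk. Here I would check two things: that the throws are legal, i.e.\ $h<b-1$ for the $C_1$-throws, which uses that the elements of $S_i$ are distinct and below the maximum; and that the walk is well defined only up to rotation. The rotation ambiguity matches exactly the cyclic equivalence in $\mathcal{X}_{t,n}$. Checking that the two maps are mutually inverse is then routine.
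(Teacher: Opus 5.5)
The paper states this lemma without proof, as the argument implicit in Banaian et al., and your overall plan is that argument: record the spacing at each landing, group landings between consecutive $C_2$ throws, and use the fact that non-landing states shift deterministically to a landing state. The primality-versus-disjointness step and the rotation bookkeeping are fine.

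The step that fails is the length identity. At a $C_2$ throw it is the \emph{lower} ball, the one at height $1$, that lands and goes over. The ball thrown at the previous $C_2$ throw is still in the air, at height $\min(S_i)+1$, and lands only $\min(S_i)$ beats later, at the first landing of the next block. So the sentence ``that ball descends until it is the one that lands, at which point the next $C_2$ throw ends the block'' is false.

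With your cuts the numbers do not match. Suppose the previous $C_2$ throw was made from spacing $\min(S_{i-1})$ to height $m$. Then the first spacing in the block is $m-\min(S_{i-1})$, and the block (from just after that throw through the next $C_2$ throw) has length $m-\min(S_i)$. Neither equals $m$, and the block length differs from $\max(S_i)$ by $\min(S_{i-1})-\min(S_i)$. That depends on the neighbouring blocks, so no fixed offset convention repairs it. For example, take the siteswap $20350$, with states $(1,4),(2,3),(1,2),(1,3),(2,5)$ and sets $\{3,1\}$ and $\{2\}$. Your blocks for these sets have lengths $4$ and $1$, not $3$ and $2$. Your inverse map has the same problem: the $C_2$ throw into block $i$ must have height $\min(S_{i-1})+\max(S_i)$, which depends on the preceding set, not $\max(S_i)$.

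The fix is to cut at landings instead of at $C_2$ throws. Let block $i$ run from the landing with spacing $\max(S_i)$ up to the landing with spacing $\max(S_{i+1})$.
\begin{itemize}
\item A $C_1$ throw from spacing $d$ to spacing $d'$ has height $d-d'$, so the next landing comes $d-d'$ beats later.
\item A $C_2$ throw from spacing $d$ leaves the other ball at height $d$, so the next landing comes $d$ beats later.
\end{itemize}
Hence block $i$ has length $(\max S_i-\min S_i)+\min S_i=\max S_i$.

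Alternatively, keep your cuts and telescope cyclically: $\sum_i\bigl(\min(S_{i-1})+\max(S_i)-\min(S_i)\bigr)=\sum_i\max(S_i)$.

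With landing cuts the inverse construction also becomes self-contained. Block $i$ starts at the landing state with the higher ball at height $\max(S_i)+1$. It descends through the elements of $S_i$ by $C_1$ throws, and closes with a $C_2$ throw of height $\min(S_i)+\max(S_{i+1})$.
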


A key result from \cite{Banaian2016} that we will use extensively is the function $c_t(n)$, which counts the number of ways to form valid, prime-generating spacing sets where the largest spacings come from a partition of $n$ into $t$ distinct parts. Essentially:

\begin{definition}\label{def:c_t(n)}
\[
    c_t(n) = \vert \mathcal{X}_{t,n} \vert
\]
\end{definition}

Then, by using Ferrers diagram and a counting argument, \cite{Banaian2016} shows that:

\begin{proposition}   
\[
c_t(n) = \sum_{\substack{p_1>\cdots>p_t \ge 1 \\ p_1+\cdots+p_t=n}}
\frac{1}{t(t+1)}\prod_{i=1}^t\bigg(\frac{i+1}{i}\bigg)^{p_i}
\]
\end{proposition}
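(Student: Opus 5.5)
The plan is to count $\mathcal{X}_{t,n}$ directly. I would first fix the multiset of maxima, then count how the non-maximal elements can be distributed, and finally account for the ordering of the sets and the cyclic equivalence. Throughout I take each $S_i$ to be a nonempty finite set of positive integers, so $\max(S_i)$ is defined. I also assume, as in the spacing-set interpretation of \cite{Banaian2016}, that the only constraint on the sets is the disjointness stated in Lemma~\ref{lem:pattern-sets-bijection}.

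\textbf{Step 1 (maxima).} Disjointness forces the maxima $\max(S_1),\dots,\max(S_t)$ to be distinct. Sorting them gives a partition $p_1>\cdots>p_t\ge 1$ of $n$ into $t$ distinct parts. Conversely, each such partition together with an assignment of the $p_j$ to positions in the tuple determines the maxima. So I would sum over partitions and, for a fixed partition, first count unordered families of disjoint sets $\{T_1,\dots,T_t\}$ with $\max T_j=p_j$.

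\textbf{Step 2 (filling the Ferrers diagram).} Consider a positive integer $x$ that is not one of the $p_j$, and set $p_{t+1}=0$.
\begin{itemize}
\item If $x>p_1$, it lies in no set.
\item If $p_{i+1}<x<p_i$, then $x$ may lie in any $T_j$ with $j\le i$ (exactly those with $p_j>x$), or in none.
\end{itemize}
Disjointness makes these choices independent, and they determine the sets, so an element in the $i$-th gap has exactly $i+1$ options. The $i$-th gap contains $p_i-p_{i+1}-1$ integers, which gives
\[
\prod_{i=1}^t (i+1)^{p_i-p_{i+1}-1}.
\]
Next I would telescope the exponents. Collecting powers, the factor $(i+1)^{p_i}$ appears in the numerator and $(i+1)^{p_{i+1}}$ in the denominator, i.e.\ $i^{p_i}$ for $i\ge 2$. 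Since $1^{p_1}=1$, and the $-1$ exponents contribute a factor $(t+1)!$, the product equals
\[
\frac{1}{(t+1)!}\prod_{i=1}^t\Big(\frac{i+1}{i}\Big)^{p_i}.
\]
This is the "filled Ferrers diagram" count, read row by row.

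\textbf{Step 3 (ordering and cyclic quotient).} The unordered family can be arranged as an ordered tuple in $t!$ ways. The cyclic group $\mathbb{Z}_t$ acts on these tuples freely, because the maxima are distinct and so no nontrivial rotation fixes a tuple. Each cyclic class therefore has exactly $t$ elements. Multiplying by $t!/t$ gives the coefficient
\[
\frac{t!}{t\,(t+1)!}=\frac{1}{t(t+1)},
\]
and summing over partitions yields the stated formula.

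The main obstacle I anticipate is Step 2. I must make sure the "$i+1$ choices" count is exactly right at the boundaries: elements equal to some $p_j$ are forced, elements above $p_1$ are excluded, and the gaps are empty when consecutive parts differ by $1$. I also need the telescoping to produce the $(t+1)!$ cleanly. The freeness of the cyclic action in Step 3 is the other point I would check explicitly.
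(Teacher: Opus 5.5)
Your proposal is correct and takes the same route the paper relies on: the paper gives no proof of its own but attributes the formula to the Ferrers-diagram counting argument of \cite{Banaian2016}. That argument is your column-by-column count ($i+1$ choices in each non-terminal column of height $i$, giving $\prod_{i}(i+1)^{p_i-p_{i+1}-1}=\frac{1}{(t+1)!}\prod_i\bigl(\frac{i+1}{i}\bigr)^{p_i}$), followed by the factor $t!/t$ for ordering the rows modulo rotation, where the rotation action is free because the maxima are distinct.
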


An immediate consequence of this is that the total number of normal prime patterns is $N'(2,n) = \sum_t c_t(n)$. Asymptotically, they showed that $N'(2,n) \sim \gamma \cdot 2^n$ for a constant $\gamma \approx 1.3296$. While Banaian et al. utilized the $c_t(n)$ function strictly for normal 2-ball patterns, we demonstrate the versatility of this function in later sections by using it as a building block for other counts.

\subsection{Variations of Juggling}

The work of Buhler, Eisenbud, Graham, and Wright, ``Juggling Drops and Descents'' \cite{Buhler1994}, laid the mathematical foundation for the modern analysis of juggling patterns. They established many of the core concepts, formalizing juggling patterns as permutations of the integers \cite{Buhler1994} and providing the condition for a sequence of throws to constitute a valid pattern \cite{Buhler1994}. A key contribution of their work was the introduction of the function $N(b, n)$, which counts the number of periodic juggling patterns with $n$ beats and $b$ balls \cite{Buhler1994}. This function serves as the primary inspiration for our research, as we seek to define and analyze analogous counting functions for different juggling variations and for the more restrictive class of prime patterns.

The paper by Bass and Butler \cite{Bass2024} explores the enumeration of juggling passing patterns within a ``saturated'' model, where all $k$ jugglers are assumed to be active on every beat of the pattern. By counting this specific set of patterns in two distinct ways, the authors provide a combinatorial proof of a generalized Worpitzky’s identity. They use generalized Eulerian numbers and rook placements on a $(kn) \times n$ board, and while this counts all valid sequences, it does not focus on prime patterns.

We, in contrast, analyze a different ``sparse'' system by focusing on prime patterns with a fixed, small number of balls (one or two), a scenario where not all hands are necessarily active, finding an exact count for $P'(1, n, k)$ and a lower bound for $P'(2, n, k)$.

In a note added in proof to their 1994 paper, Buhler et al. \cite{Buhler1994} mentioned that the foundational work by Ehrenborg and Readdy \cite{EHRENBORG1996107} pioneered the enumeration of multiplex juggling patterns. They defined patterns as triples $(d, x, a)$ and utilized decks of cards indexed by multisets of crossings to derive their main $q$-analogue identities. However, when counting physically distinct prime patterns this abstraction, their reliance on ``internal crossings'' implies that a bijection between card sequences and unique juggling states cannot be established. Polster \cite{Polster2003} also explored multiplex state graphs, relying on a bracket notation (for example, $[11]10$) to group balls landing simultaneously. Hence, we redefine these concepts.

A more rigorous framework for multiplex juggling was later provided by Butler and Graham \cite{Butler2010}, who transformed the problem of counting sequences into one of filling a binary matrix subject to specific constraints. While their work successfully enumerated periodic and primitive sequences, they explicitly posed the enumeration of \textbf{prime} multiplex sequences as an open problem. Building on this foundation, we narrow our focus to the exact enumeration of these prime patterns, specifically for two balls. By transitioning from Polster's bracket notation to a concrete coordinate notation $\sigma_{(i,j)}$, and defining a new, unambiguous set of cards ($D_0, D_a, D_b, D_c$), we have a one-to-one correspondence between our card sequences and the prime patterns themselves. This isolates the internal crossings described by Ehrenborg and Readdy (captured by our $D_b$ card) and gives an exact count for $M'(2, n)$, thereby resolving Butler and Graham's open problem for the two-ball case.

\subsection{Infinite State Graph} \label{sec:intro-infinite-state-graph}

To study the broader implications of juggling, researchers have also explored infinite state models. Banaian et al. \cite{Banaian2016} observed that the state graph for $b$ balls is isomorphic to an induced subgraph of the state graph for $b+1$ balls. Furthermore, Knutson, Lam, and Speyer \cite{Knutson_Lam_Speyer_2013} established a connection between juggling patterns and algebraic geometry by formalizing ``virtual juggling states'' and ``juggling functions.'' In their framework, the negative integers act as a ``Dirac sea'' of available balls, and patterns are modeled as affine permutations $f: \mathbb{Z} \to \mathbb{Z}$ in the extended affine Weyl group $\tilde{A}_{n-1}$, where $f(t)$ denotes the landing time of a ball thrown at time $t$.

This infinite modeling allows for the analysis of properties independent of a finite number of balls. This has been relevant in modern combinatorics; for instance, Galashin and Lam \cite{Galashin2024}  used  bounded affine permutations and juggling crossing statistics to compute positroid Catalan numbers, linking juggling math to knot invariants and Khovanov-Rozansky homology. 

In \cref{sec:infinite-state-graph}, we build upon this legacy by formally defining the infinite state graph $G_\infty$. Notice that our abbreviated infinite binary sequences conceptually mirror the ``virtual juggling states'' of Knutson et al., allowing us to establish bijections between walk lengths by utilizing flip-reverse involutions across the infinite graph.

Our formalization of $G_\infty$ provides a direct answer to an open question posed at the conclusion of Buhler et al.'s appendix \cite{Buhler1994}: \textit{``How can these ideas be used to describe, or 'name,' juggling patterns with infinitely many balls?''}.

\subsection{Cards Notation}

Another way to represent juggling patterns is using their cards. A $b$-ball juggling pattern can be represented using $b+1$ cards, $C_0, C_1, \dots, C_b$, where $C_0$ means no ball landed, and $C_i$ means a ball landed and it was thrown to be now in relative position $i$. These cards are well studied, and a valid pattern represented by states has a known bijection with its card representation (see \cite{Buhler1994, Polster2003, Banaian2016}). It is also important to note that our use of cards differs from the definition used by Bass and Butler \cite{Bass2024}. Every beat in a juggling pattern has a corresponding card. 

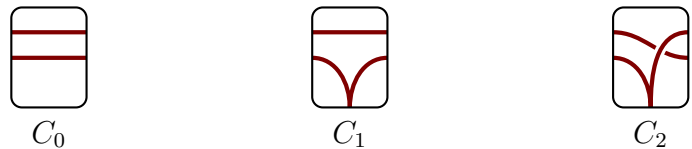
\begin{figure}[h]
\centering
\begin{tikzpicture}
\cardA{-4}0
\node at (-3.5,0) [below = 1] {$C_0$};
\cardB{0}0
\node at (0.5,0) [below = 1] {$C_1$};
\cardC{4}0
\node at (4.5,0) [below = 1] {$C_2$};
\end{tikzpicture}
\caption{Cards for $b = 2$}
\label{fig:juggling-cards}
\end{figure}

For example, in \cref{fig:state}, the pattern $\langle 0,1,1,0 \rangle \to \langle 1,1,0,0 \rangle \to \langle 1,0,0,1 \rangle \to \langle 0,1,1,0 \rangle$ of length $3$ can also be represented by the sequence $C_0 C_2 C_1$ or any rotation of these.

For example, in \cref{fig:state}, the pattern $\langle 0,1,1,0 \rangle \to \langle 1,1,0,0 \rangle \to \langle 1,0,0,1 \rangle \to \langle 0,1,1,0 \rangle$ of length $3$ can also be represented by the sequence $C_0 C_2 C_1$ or any rotation of these. Following the card sequences utilized by Banaian et al. to track relative ball order, we will introduce the $D$ cards to unambiguously track simultaneous landings and crossings in multiplex prime patterns.

\subsection{Our work}

We take the idea of using states to define patterns and extend it to its variations (multiplex, passing) and introduce a new variation: colored. In their concluding remarks, Butler and Graham \cite{Butler2010} speculated on the combinatorial complexity of enumerating walks where balls are distinct rather than identical, which our colored variation formalizes. We aim to count the number of prime patterns for each of these variations. To do this, we establish a new notation framework. While Banaian et al. \cite{Banaian2016} used $P(n,b)$ to denote the count of normal prime patterns, we redefine normal prime patterns as $N'(b,n)$ to align with the standard periodic pattern notation $N(b,n)$ from Buhler et al. \cite{Buhler1994}. This allows us to reserve $P$ for Passing patterns, $M$ for Multiplex, and $C$ for Colored. We use cursive to refer to the sets of the patterns and the standard Latin letter to refer to the count of the respective set.

\begin{table}[h!]
    \centering
    \begin{tabular}{|l|l|l|}
        \hline
        \textbf{State Representation} & \textbf{Constraint on Entries} & \textbf{Constraint on $k$} \\
        \hline \hline
        $\alpha = \langle \alpha_1, \alpha_2, \dots \rangle$ & $\alpha_i \in \{0, 1\}$ & N/A \\
        \hline
        $\alpha = \langle \alpha_1, \alpha_2, \dots \rangle$ & $\alpha_i \in \{0, 1, \dots, k\}$ & $k$apacity per hand $k \le b$ \\
        \hline
        $\alpha = \langle \alpha_1, \alpha_2, \dots \rangle$ & $\alpha_i \in \{ 0, \underbrace{\textcolor{orange}{1}, \textcolor{blue}{1}, \dots, \textcolor{violet}{1}}_{k} \}$ & number of $k$olors $k \le b$ \\
        \hline
        $A = (\alpha_{i,j})$, a $k \times \infty$ matrix  & $\sigma_{i,j} \in \{0, 1\}$ & number of hands $k \in \mathbb{Z}$ \\
        \hline
    \end{tabular}
    \caption{State representations and constraints for juggling variations.}
    \label{tab:state_representations}
\end{table}

Where $\sum_{i = 1}^{\infty} \alpha_i = b$, or $\sum_{i=1}^{k} \sum_{j=1}^{\infty} \alpha_{i,j} = b$ for the last case. The condition for a valid transition $\alpha \to \beta$ is $\alpha_{i+1} \leq \beta_i$ for $i = 1, 2, \dots$, and the invariant of the sum being $b$. Similarly, for passing, the condition for a valid transition $A \to B$ is $\alpha_{i, j+1} \leq \beta_{i, j}$ for $i = 1, 2, \dots, k$ and $j = 1, 2, \dots$, again respecting the invariant.

\begin{table}[H]
    \centering
    \begin{tabular}{|l|c|c|}
        \hline
        \textbf{Juggling Variation} & \textbf{Periodic Patterns} & \textbf{Prime Patterns} \\
        \hline \hline
        Normal & 
        \begin{tabular}{@{}c@{}} $\mathcal{N}(b, n)$ \\ $N(b, n)$ \end{tabular} & 
        \begin{tabular}{@{}c@{}} $\mathcal{N}'(b, n)$ \\ $N'(b, n)$ \end{tabular} \\
        \hline
        Multiplex & 
        \begin{tabular}{@{}c@{}} $\mathcal{M}(b, n, k)$ \\ $M(b, n, k)$ \end{tabular} &
        \begin{tabular}{@{}c@{}} $\mathcal{M}'(b, n, k)$ \\ $M'(b, n, k)$ \end{tabular} \\
        \hline
        Colored & 
        \begin{tabular}{@{}c@{}} $\mathcal{C}(b, n, k)$ \\ $C(b, n, k)$ \end{tabular} &
        \begin{tabular}{@{}c@{}} $\mathcal{C}'(b, n, k)$ \\ $C'(b, n, k)$ \end{tabular} \\
        \hline
        Passing & 
        \begin{tabular}{@{}c@{}} $\mathcal{P}(b, n, k)$ \\ $P(b, n, k)$ \end{tabular} &
        \begin{tabular}{@{}c@{}} $\mathcal{P}'(b, n, k)$ \\ $P'(b, n, k)$ \end{tabular} \\
        \hline
    \end{tabular}
    \caption{Notation for the sets (cursive) and counts (latin) of juggling patterns. Here, $b$ is the number of balls, $n$ is the period, $m$ is the multiplex capacity, and $k$ is the number of colors or hands.}
    \label{tab:notation_prime}
\end{table}

This paper builds upon the framework established above. A general formula for $N(b, n)$ was found using Mobius inversion by Buhler et al \cite{Buhler1994} in 1994. Then, in 2015, an exact count for $N'(2,n)$ was found by Banaian et al \cite{Banaian2016}.

We extend this work by providing an exact count for $M'(2, n) = M'(2, n, 2)$ in \cref{sec:multiplex-2-ball} and also an exact count for $C'(2, n) = C'(2, n, 2)$ in \cref{sec:colored-2-ball}, basing our argument on the $c_t(n)$ function developed by \cite{Banaian2016}. We also find an exact count for $P'(1, n, k)$ in \cref{sec:passing-1-ball} and a lower bound for $P'(2, n, k)$ in \cref{sec:passing-2-ball}. Additionally, we improve the currently standing lower bound for $N'(b, n)$, which was $\frac{1}{b} b^n$, presented in \cite{Banaian2016}. 

Finally, \cref{sec:infinite-state-graph}, we introduce a result that examines the structure of the state graph for an infinite number of balls and establishes a bijection between classes of prime patterns.

\section{Asymptotics for \texorpdfstring{$c_t(n)$}{c_t(n)}} \label{sec:asymptotics}

\subsection{Preliminaries}

We start with some definitions that can be found in \cite{Banaian2016}. We use these to develop a general theory of functions that define the behavior of juggling functions.

\begin{definition} \label{def:c_t}
For integers $t \ge 1$ and $n \ge 1$, the function $c_t(n)$ is defined as:
\[
c_t(n) = \sum_{\substack{p_1>\cdots>p_t \ge 1 \\ p_1+\cdots+p_t=n}}
\frac{1}{t(t+1)}\prod_{i=1}^t\bigg(\frac{i+1}{i}\bigg)^{p_i}
\]
\end{definition}

The asymptotic behavior of $c_t(n)$ is controlled by two sequences, $q_t$ and $r_t$.

\begin{definition} \label{def:q_t}
The sequence $q_t$ is defined by $q_1 = 1/2$ and for $t \ge 2$:
\[
q_t = \bigg(\frac{t-1}{2^t-t-1}\bigg) q_{t-1} = \frac{1}{2} \prod_{i=2}^t \frac{i-1}{2^i - i -1}
\]
\end{definition}

\begin{definition}
The sequence $r_t$ is defined by the recurrence $r_1 = 0$, $r_2 = \frac{4 \sqrt{3}}{9}$, and for $t \ge 3$:
\[
r_t = \frac{t-1}{\sqrt{3}^t - t - 1}r_{t-1} + 2 \left( \frac{2^t}{t+1} \right)^{(t-1)/2} q_{t-1}
\]
\end{definition}

These sequences provide sharp bounds for $c_t(n)$.

\begin{lemma}\label{lem:ct_bounds}
For all integers $t \ge 1$ and $n \ge 1$, the following inequality holds:
\[
q_t 2^n - r_t \sqrt{3}^n \leq c_t(n) \leq q_t 2^n
\]
\end{lemma}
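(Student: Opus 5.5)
The plan is to prove both bounds by induction on $t$, using a recurrence that expresses $c_t$ in terms of $c_{t-1}$. To get it, we write the smallest part as $p_t=m\ge 1$ and set $p_i=m+p'_i$ for $i<t$. Then $p'_1>\cdots>p'_{t-1}\ge 1$ and $\sum p'_i=n-tm$. The weight factors nicely: $\prod_{i=1}^t\big(\tfrac{i+1}{i}\big)^{m}=(t+1)^m$, and the prefactors satisfy $\frac{1}{t(t+1)}=\frac{t-1}{t+1}\cdot\frac{1}{(t-1)t}$. Hence, for $t\ge 2$,
\[
c_t(n)=\frac{t-1}{t+1}\sum_{m\ge 1,\ n-tm\ge 1}(t+1)^m\,c_{t-1}(n-tm),
\]
with the convention that $c_{t-1}(k)=0$ when $k$ is too small to be split into $t-1$ distinct parts. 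The base case $t=1$ is immediate: $c_1(n)=\tfrac12 2^n=q_12^n$ and $r_1=0$.

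For the upper bound, we apply the induction hypothesis $c_{t-1}(k)\le q_{t-1}2^k$ to each term and extend the sum to all $m\ge1$, which only increases it since every term is nonnegative. This gives
\[
c_t(n)\le \frac{t-1}{t+1}\,q_{t-1}2^n\sum_{m\ge1}\Big(\frac{t+1}{2^t}\Big)^m=\frac{t-1}{t+1}\,q_{t-1}2^n\cdot\frac{t+1}{2^t-t-1}=q_t2^n .
\]
The geometric series converges because $2^t>t+1$ for $t\ge2$. The last equality is exactly the defining recurrence for $q_t$.

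For the lower bound, we insert $c_{t-1}(k)\ge q_{t-1}2^k-r_{t-1}\sqrt3^{\,k}$ for $1\le k=n-tm$; terms below that range are zero. This splits the estimate into two pieces.

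The error piece is
\[
\frac{t-1}{t+1}\,r_{t-1}\sqrt3^{\,n}\sum_{m}\Big(\frac{t+1}{\sqrt3^{\,t}}\Big)^m\le \frac{t-1}{\sqrt3^{\,t}-t-1}\,r_{t-1}\sqrt3^{\,n},
\]
valid for $t\ge3$ since $\sqrt3^{\,3}>4$. This reproduces the first term of the recurrence for $r_t$.

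The main piece is the truncated geometric sum $\frac{t-1}{t+1}q_{t-1}2^n\sum_{1\le m\le (n-1)/t}\big(\tfrac{t+1}{2^t}\big)^m$. It equals $q_t2^n$ minus the tail over $m>(n-1)/t$. That tail is $q_t2^n\big(\tfrac{t+1}{2^t}\big)^{M}$, where $M=\lfloor (n-1)/t\rfloor+1\ge (n-1)/t$, so it is at most
\[
q_t\,2^{n}\Big(\tfrac{t+1}{2^t}\Big)^{(n-1)/t}=q_t\,(t+1)^{n/t}\,2\,\Big(\tfrac{2^t}{t+1}\Big)^{1/t}.
\]
We then use $(t+1)^{1/t}\le\sqrt3$ for $t\ge2$ (equality at $t=2$, and decreasing after) to replace $(t+1)^{n/t}$ by $\sqrt3^{\,n}$. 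This bounds the tail by a constant multiple of $q_{t-1}\sqrt3^{\,n}$, to be compared with the second term $2\big(2^t/(t+1)\big)^{(t-1)/2}q_{t-1}$ of the $r_t$ recurrence.

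The step I expect to be the main obstacle is matching this tail constant exactly to $2\big(2^t/(t+1)\big)^{(t-1)/2}$. The naive estimate above gives something of the shape $q_t\,2\,(2^t/(t+1))^{1/t}$, which is not obviously the same. I would first try a cruder but cleaner route: compare $(t+1)^{n/t}$ with $\sqrt3^{\,n}$ while keeping the slack factor $\big(2^t/(t+1)\big)^{\cdot}$ raised to a power chosen so that $q_t\le q_{t-1}$ absorbs the rest. Alternatively, I would bound the tail using the worst residue of $n$ modulo $t$, which seems to be where the exponent $(t-1)/2$ comes from. Any inequality showing the tail is at most $2(2^t/(t+1))^{(t-1)/2}q_{t-1}\sqrt3^{\,n}$ suffices, since overshooting the constant is harmless for a lower bound. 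Finally, $t=2$ must be checked directly, because the error-piece series diverges there ($\sqrt3^{\,2}=3=t+1$). But $r_1=0$, so only the tail appears, and computing $c_2(n)=\tfrac13\sum_{1\le m\le (n-1)/2}3^m2^{n-2m-1}$ exactly for $n$ odd and even should yield the constant $r_2=\tfrac{4\sqrt3}{9}$.
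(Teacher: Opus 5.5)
Since the paper quotes this lemma from Banaian et al.\ without a proof of its own, the only question is whether your argument closes. The skeleton is right. The recurrence $c_t(n)=\frac{t-1}{t+1}\sum_{m}(t+1)^m c_{t-1}(n-tm)$ is correct, the upper bound is complete, and your error piece for $t\ge 3$ gives exactly the first term of the $r_t$ recurrence.

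The gap is the tail, and your computation of it is wrong, not just unfinished. Let $x=\frac{t+1}{2^t}$ and let $L=\lfloor (n-1)/t\rfloor$ be the largest admissible $m$. The identity $\frac{t-1}{t+1}q_{t-1}\sum_{m\ge 1}x^m=q_t$ gives
\[
\frac{t-1}{t+1}\,q_{t-1}2^n\sum_{m\ge L+1}x^m=q_t2^n x^{L},
\]
not $q_t2^nx^{L+1}$. Since $L=(n-t)/t$ whenever $t\mid n$, your bound $q_t2^nx^{(n-1)/t}$ fails. For $t=2$, $n=4$ one has $c_2(4)=2$ and $q_22^4=8$, so the tail is $6>8(3/4)^{3/2}\approx 5.2$. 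Your next rewriting is also off by a factor $2$: it equals $2q_t2^nx^{(n-1)/t}$. That hides the first error for $t\le 3$ but not beyond. For $t=4$, $n=8$ the tail is $80q_4$, while your expression gives $4\cdot 5^{7/4}q_4\approx 66.9\,q_4$.

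Once corrected, there is no exponent to match. Using $L\ge (n-t)/t$ and $(t+1)^{1/t}\le\sqrt3$,
\[
\text{tail}\le q_t2^nx^{(n-t)/t}=q_t\,\frac{2^t}{t+1}\,(t+1)^{n/t}\le q_t\,\frac{2^t}{t+1}\,\sqrt3^{\,n}.
\]
Moreover,
\[
q_t\,\frac{2^t}{t+1}=\frac{(t-1)2^t}{(t+1)(2^t-t-1)}\,q_{t-1}\le 2q_{t-1}\le 2\Big(\frac{2^t}{t+1}\Big)^{(t-1)/2}q_{t-1}.
\]
The first inequality is equivalent to $2(t+1)^2\le (t+3)2^t$, and the second to $2^t\ge t+1$. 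Together with your error piece this closes the induction for $t\ge 3$. The factor $(2^t/(t+1))^{(t-1)/2}$ in $r_t$ is pure slack, so you need no residue analysis beyond $L\ge (n-t)/t$.

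For $t=2$ the error piece is zero because $r_1=0$, and the same estimate gives $c_2(n)\ge 2^{n-1}-\frac23\sqrt3^{\,n}$. The exact tail is $3^{(n-1)/2}$ for odd $n$ and $\frac23\,3^{n/2}$ for even $n$. So the direct computation does not produce $r_2=\frac{4\sqrt3}{9}$; it produces the smaller constant $\frac23$, which is all the lemma requires.
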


Summing $c_t(n)$ over all possible values of $t$ gives the function $P'(2,n)$.

\begin{definition}
\[
P'(2, n) = \sum_{t=1}^\infty c_t(n)
\]
\end{definition}

The bounds on $c_t(n)$ lead to the asymptotic behavior of $P(2,n)$.

\begin{theorem}\label{thm:P2n_asymptotics}
Let $\gamma = \sum_{t=1}^\infty q_t$. The function $P'(2,n)$ has the asymptotic behavior:
\[
(\gamma - o_n(1)) 2^n \leq P'(2, n) \leq \gamma 2^n
\]
\end{theorem}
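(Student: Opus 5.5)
The plan is to sum the bounds of \cref{lem:ct_bounds} over $t$ and control the tail. First note that $c_t(n)=0$ whenever $t(t+1)/2>n$, since there is no partition of $n$ into $t$ distinct positive parts. So the sum defining $P'(2,n)$ is finite, running only over $t\le T(n)$, where $T(n)\approx\sqrt{2n}$.

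For the upper bound we use $c_t(n)\le q_t2^n$ for every $t$. This gives
\[
P'(2,n)=\sum_{t\le T(n)}c_t(n)\le 2^n\sum_{t\le T(n)}q_t\le \gamma 2^n .
\]
Here we need $q_t>0$, which is clear from \cref{def:q_t}. We also need $\gamma<\infty$, which holds because the ratio $q_t/q_{t-1}=(t-1)/(2^t-t-1)\to 0$.

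For the lower bound, fix a cutoff $K$ and keep only the terms $t\le K$, using $c_t(n)\ge 0$ for the rest. Applying the lower half of \cref{lem:ct_bounds} gives
\[
P'(2,n)\ge 2^n\sum_{t=1}^{K}q_t-\sqrt3^{\,n}\sum_{t=1}^{K}r_t .
\]
Dividing by $2^n$, we obtain $P'(2,n)/2^n\ge \gamma-\sum_{t>K}q_t-(\sqrt3/2)^n R_K$, where $R_K=\sum_{t\le K}r_t$ is a constant once $K$ is fixed. Given $\varepsilon>0$, first pick $K$ so that $\sum_{t>K}q_t<\varepsilon/2$. Then pick $n$ large enough that $(\sqrt3/2)^nR_K<\varepsilon/2$. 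This yields $P'(2,n)\ge(\gamma-\varepsilon)2^n$ for all large $n$, which is exactly the statement with an $o_n(1)$ error.

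The only delicate point is that we must not sum the error terms $r_t\sqrt3^n$ over all $t\le T(n)$. The recurrence for $r_t$ contains the factor $(2^t/(t+1))^{(t-1)/2}$, which could grow superexponentially in $t$. Truncating at a fixed $K$, as above, avoids any need to bound the sum of the $r_t$ uniformly.

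Alternatively, one could choose a slowly growing cutoff $K(n)$ to get an explicit rate. Even then the error from the tail of $\sum_t q_t$ already decays superexponentially in $K$, so the plain $\varepsilon$-argument above is enough and is the route I will take.
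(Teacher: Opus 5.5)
Your proof is correct and is essentially the argument the paper gives in part 1 of the General Asymptotic Theorem (\cref{thm:general_asymptotics}) specialized to $w\equiv 1$: the upper bound comes from summing $c_t(n)\le q_t2^n$, and the lower bound comes from truncating at a fixed $K$ so that only the finite constant $\sum_{t\le K} r_t$ multiplies $\sqrt3^{\,n}$, then choosing $K$ before $n$. Your caution about not summing $r_t\sqrt3^{\,n}$ over all $t$ is exactly why the paper also truncates at a fixed $K$.
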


\subsection{General Asymptotic Theorem} 

\begin{theorem}[General Asymptotic Theorem]\label{thm:general_asymptotics}
Let $w: \mathbb{Z}^+ \to \mathbb{R}_{\ge 0}$ be a sequence of non-negative weights. Define the constant $\gamma_w = \sum_{t=1}^\infty w(t) q_t$, and assume this series converges. Let $F_w(n) = \sum_{t=1}^\infty w(t) c_t(n)$. Then, as $n \to \infty$:
\begin{enumerate}
    \item $F_w(n) = (\gamma_w - o(1)) 2^n$.
    \item $S_w(n) := \sum_{m=1}^{n-1} F_w(m) = (\gamma_w - o(1)) 2^n$.
    \item $(F_w * F_w)(n) := \sum_{m=1}^{n-1} F_w(m) F_w(n-m) = (\gamma_w^2 - o(1)) n 2^n$.
\end{enumerate}
\end{theorem}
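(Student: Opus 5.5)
The plan is to derive all three parts from \cref{lem:ct_bounds}. The upper bounds are immediate. The lower bounds use a truncation argument, which avoids any need to control the growth of $r_t$ or the tail of $w(t)$ beyond the assumed convergence of $\gamma_w$.

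For part 1, the upper bound follows by multiplying $c_t(n)\le q_t2^n$ by $w(t)\ge 0$ and summing, which gives $F_w(n)\le \gamma_w 2^n$ for every $n$. For the lower bound, fix $\varepsilon>0$. Since $\gamma_w$ converges, I choose $T$ with $\sum_{t\le T} w(t)q_t\ge \gamma_w-\varepsilon/2$. All terms are nonnegative, so dropping $t>T$ and applying the lower bound of \cref{lem:ct_bounds} gives
\[
F_w(n)\ \ge\ \sum_{t=1}^T w(t)c_t(n)\ \ge\ \Big(\gamma_w-\tfrac{\varepsilon}{2}\Big)2^n-\Big(\sum_{t=1}^T w(t)r_t\Big)\sqrt3^{\,n}.
\]
The coefficient of $\sqrt3^{\,n}$ is a finite constant depending only on $T$, and $(\sqrt3/2)^n\to0$. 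Hence $F_w(n)\ge(\gamma_w-\varepsilon)2^n$ for all $n\ge n_0(\varepsilon)$, which is part 1. I expect this to be the main obstacle, and truncation at a fixed $T$ is what resolves it. The $r_t$ may grow quickly, so one cannot bound $\sum_t w(t)r_t\sqrt3^{\,n}$ uniformly; fixing $T$ first and only then letting $n\to\infty$ sidesteps this.

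For part 2, the upper bound is $S_w(n)\le\gamma_w\sum_{m=1}^{n-1}2^m=\gamma_w(2^n-2)\le\gamma_w2^n$. For the lower bound, fix $\varepsilon>0$ and an integer $K$ with $2^{-K}<\varepsilon$. For $n$ large enough that $n-K\ge n_0(\varepsilon)$, I keep only the last $K$ terms:
\[
S_w(n)\ \ge\ \sum_{m=n-K}^{n-1}(\gamma_w-\varepsilon)2^m=(\gamma_w-\varepsilon)(1-2^{-K})2^n\ \ge\ (\gamma_w-\varepsilon)(1-\varepsilon)2^n.
\]
Letting $\varepsilon\to0$ gives $S_w(n)=(\gamma_w-o(1))2^n$.

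For part 3, the upper bound is $F_w(m)F_w(n-m)\le\gamma_w^2 2^m2^{n-m}=\gamma_w^22^n$ for each $m$. Summing over the $n-1$ values of $m$ gives at most $\gamma_w^2 n2^n$. For the lower bound, I take $n_0=n_0(\varepsilon)$ from part 1 and retain only the indices $n_0\le m\le n-n_0$, for which both factors are at least $(\gamma_w-\varepsilon)2^{\,\cdot}$. This gives
\[
(F_w*F_w)(n)\ \ge\ (n-2n_0+1)(\gamma_w-\varepsilon)^2 2^n.
\]
Since $n_0$ is fixed, $(n-2n_0+1)/n\to1$ as $n\to\infty$, so the right side is $(\gamma_w^2-o(1))n2^n$. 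The degenerate case $\gamma_w=0$ is trivial throughout, since then all the quantities are $0$.
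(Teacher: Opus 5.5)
Your proof is correct. Part 1 is exactly the paper's argument: truncate at a fixed $T$, apply the lower bound of the $c_t$ lemma to finitely many terms, then let $n\to\infty$. For parts 2 and 3, however, you take a different and softer route.

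The paper re-runs the truncation inside each sum. It writes $F_{w,K}(m)\ge\gamma_{w,K}2^m-\rho_{w,K}\sqrt3^{\,m}$ for every $m$ and sums the geometric series of $\sqrt3^{\,m}$ errors in part 2. In part 3 it expands $(\gamma_{w,K}2^m-E_K(m))(\gamma_{w,K}2^{n-m}-E_K(n-m))$, bounding the cross terms by $O(2^n)$ and the product of errors by $O(n\sqrt3^{\,n})$.

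You instead treat part 1 as a black box. You use only $F_w\ge 0$, $F_w(m)\le\gamma_w2^m$, and $F_w(m)\ge(\gamma_w-\varepsilon)2^m$ for $m\ge n_0(\varepsilon)$, and then discard all but a window of indices. In part 2 the last $K$ terms already contribute $(1-2^{-K})2^n$ of the geometric mass. In part 3 only the $2n_0-2$ boundary indices with $m<n_0$ or $n-m<n_0$ are lost, which is negligible against the factor $n$.

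Your version makes parts 2 and 3 general facts about any nonnegative sequence satisfying part 1 together with the uniform upper bound. It needs no error bookkeeping and does not depend on the specific $\sqrt3^{\,n}$ form of the error. The paper's version keeps explicit error terms for each fixed $K$. However, $K$ must grow as $\varepsilon\to 0$, so neither argument yields an effective rate.

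Two small points, both harmless. Your multiplications of lower bounds implicitly assume $\varepsilon<\gamma_w$. The case $\gamma_w=0$ is indeed trivial, since then $0\le F_w\le 0$.
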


\begin{proof}
The proof relies on the bounds for $c_t(n)$ from \cref{lem:ct_bounds}:
\[
q_t 2^n - r_t \sqrt{3}^n \leq c_t(n) \leq q_t 2^n.
\]

First, we work on the asymptotics of $F_w(n)$. We start by establishing the upper bound. Since $w(t) \ge 0$ for all $t$, we have:
\[
F_w(n) = \sum_{t=1}^\infty w(t) c_t(n) \leq \sum_{t=1}^\infty w(t) (q_t 2^n) = \left(\sum_{t=1}^\infty w(t) q_t\right) 2^n = \gamma_w 2^n.
\]
This implies $\limsup_{n \to \infty} \frac{F_w(n)}{2^n} \le \gamma_w$.

For the lower bound, let $\varepsilon > 0$. Since the series for $\gamma_w$ converges, we can choose an integer $K$ large enough such that $\sum_{t=1}^K w(t) q_t > \gamma_w - \frac{\varepsilon}{2}$. Let $\gamma_{w,K} = \sum_{t=1}^K w(t) q_t$ and $\rho_{w,K} = \sum_{t=1}^K w(t) r_t$. Note that $\rho_{w,K}$ is a finite sum and thus well-defined.

Since $w(t) \ge 0$ and $c_t(n) \ge 0$, we can truncate the sum for a lower bound:
\begin{align*}
F_w(n) = \sum_{t=1}^\infty w(t) c_t(n) &\ge \sum_{t=1}^K w(t) c_t(n) \\
&\ge \sum_{t=1}^K w(t) (q_t 2^n - r_t \sqrt{3}^n) \\
&= \left(\sum_{t=1}^K w(t) q_t\right) 2^n - \left(\sum_{t=1}^K w(t) r_t\right) \sqrt{3}^n \\
&= \gamma_{w,K} 2^n - \rho_{w,K} \sqrt{3}^n.
\end{align*}
Using our choice of $K$, we have $F_w(n) > (\gamma_w - \frac{\varepsilon}{2}) 2^n - \rho_{w,K} \sqrt{3}^n$.
We want to show that for sufficiently large $n$, this is greater than $(\gamma_w - \varepsilon) 2^n$. This is equivalent to showing $\frac{\varepsilon}{2} 2^n > \rho_{w,K} \sqrt{3}^n$, which simplifies to $\frac{\varepsilon}{2\rho_{w,K}} > (\frac{\sqrt{3}}{2})^n$. Since $\frac{\sqrt{3}}{2} < 1$, the right-hand side approaches $0$ as $n \to \infty$. Thus, for any fixed $\varepsilon$ and $K$, there exists an $N$ such that the inequality holds for all $n > N$.
This shows that for any $\varepsilon > 0$, $F_w(n) > (\gamma_w - \varepsilon) 2^n$ for sufficiently large $n$. This implies $\liminf_{n \to \infty} \frac{F_w(n)}{2^n} \ge \gamma_w$.

Combining the limsup and liminf results, we conclude that $\lim_{n \to \infty} \frac{F_w(n)}{2^n} = \gamma_w$.

Second, we work on the asymptotics of $S_w(n)$. For the upper bound, we sum the upper bound for $F_w(m)$:
\[
S_w(n) = \sum_{m=1}^{n-1} F_w(m) \le \sum_{m=1}^{n-1} \gamma_w 2^m = \gamma_w (2^n - 2) \le \gamma_w 2^n.
\]
This implies $\limsup_{n \to \infty} \frac{S_w(n)}{2^n} \le \gamma_w$.

For the lower bound, we use the same truncation argument. For a given $\varepsilon > 0$, choose $K$ such that $\gamma_{w,K} = \sum_{t=1}^K w(t) q_t > \gamma_w - \frac{\varepsilon}{2}$.
\begin{align*}
S_w(n) = \sum_{m=1}^{n-1} F_w(m) &\ge \sum_{m=1}^{n-1} \left( \sum_{t=1}^K w(t) c_t(m) \right) \\
&\ge \sum_{m=1}^{n-1} \left( \gamma_{w,K} 2^m - \rho_{w,K} \sqrt{3}^m \right) \\
&= \gamma_{w,K} \sum_{m=1}^{n-1} 2^m - \rho_{w,K} \sum_{m=1}^{n-1} \sqrt{3}^m \\
&= \gamma_{w,K} (2^n - 2) - \rho_{w,K} \frac{\sqrt{3}((\sqrt{3})^{n-1}-1)}{\sqrt{3}-1}.
\end{align*}
Dividing by $2^n$, we get
\[
\frac{S_w(n)}{2^n} \ge \gamma_{w,K} (1 - 2/2^n) - \frac{\rho_{w,K}}{\sqrt{3}-1} \left( \left(\frac{\sqrt{3}}{2}\right)^n - \frac{\sqrt{3}}{2^n} \right).
\]
Taking the limit inferior as $n \to \infty$:
\[
\liminf_{n \to \infty} \frac{S_w(n)}{2^n} \ge \gamma_{w,K} (1 - 0) - \frac{\rho_{w,K}}{\sqrt{3}-1} (0 - 0) = \gamma_{w,K}.
\]
Since we can choose $K$ such that $\gamma_{w,K}$ is arbitrarily close to $\gamma_w$, it follows that $\liminf_{n \to \infty} \frac{S_w(n)}{2^n} \ge \gamma_w$.

Combining the limsup and liminf results, we conclude that $\lim_{n \to \infty} \frac{S_w(n)}{2^n} = \gamma_w$.

Finally, we study the asymptotics of $(F_w * F_w)(n)$. The upper bound is found by substituting the upper bound for $F_w$:
\[
(F_w * F_w)(n) = \sum_{m=1}^{n-1} F_w(m) F_w(n-m) \le \sum_{m=1}^{n-1} (\gamma_w 2^m) (\gamma_w 2^{n-m}) = \sum_{m=1}^{n-1} \gamma_w^2 2^n = (n-1)\gamma_w^2 2^n.
\]
Dividing by $n 2^n$ gives $\frac{(F_w * F_w)(n)}{n 2^n} \le \frac{n-1}{n} \gamma_w^2$. Taking the limit superior, we get $\limsup_{n \to \infty} \frac{(F_w * F_w)(n)}{n 2^n} \le \gamma_w^2$.

For the lower bound, let $\varepsilon > 0$ and choose $K$ such that $\gamma_{w,K} > \gamma_w - \varepsilon$. Since $F_w(k) \ge 0$, we have $(F_w * F_w)(n) \ge (F_{w,K} * F_{w,K})(n)$, where $F_{w,K}(k) = \sum_{t=1}^K w(t) c_t(k)$.
Let $F_{w,K}(k) = \gamma_{w,K} 2^k - E_K(k)$, where $0 \le E_K(k) \le \rho_{w,K} \sqrt{3}^k$.
\begin{align*}
(F_{w,K} * F_{w,K})(n) &= \sum_{m=1}^{n-1} (\gamma_{w,K} 2^m - E_K(m))(\gamma_{w,K} 2^{n-m} - E_K(n-m)) \\
&= \sum_{m=1}^{n-1} \gamma_{w,K}^2 2^n - \gamma_{w,K} \sum_{m=1}^{n-1} (2^m E_K(n-m) + 2^{n-m} E_K(m)) + \sum_{m=1}^{n-1} E_K(m)E_K(n-m).
\end{align*}
The main term is $(n-1)\gamma_{w,K}^2 2^n$. The error terms are of a smaller order. The second term is bounded by $O(2^n)$ and the third by $O(n\sqrt{3}^n)$.
So, $(F_{w,K} * F_{w,K})(n) = (n-1)\gamma_{w,K}^2 2^n - O(2^n) - O(n\sqrt{3}^n)$.
Dividing by $n 2^n$:
\[
\frac{(F_{w,K} * F_{w,K})(n)}{n 2^n} = \frac{n-1}{n}\gamma_{w,K}^2 - O(1/n) - O((\sqrt{3}/2)^n).
\]
Taking the limit as $n \to \infty$, we get $\lim_{n \to \infty} \frac{(F_{w,K} * F_{w,K})(n)}{n 2^n} = \gamma_{w,K}^2$.
Therefore,
\[
\liminf_{n \to \infty} \frac{(F_w * F_w)(n)}{n 2^n} \ge \lim_{n \to \infty} \frac{(F_{w,K} * F_{w,K})(n)}{n 2^n} = \gamma_{w,K}^2 > (\gamma_w - \varepsilon)^2.
\]
Since this holds for any $\varepsilon > 0$, we can let $\varepsilon \to 0$ to find that $\liminf_{n \to \infty} \frac{(F_w * F_w)(n)}{n 2^n} \ge \gamma_w^2$.

Combining the limsup and liminf results, we conclude that $\lim_{n \to \infty} \frac{(F_w * F_w)(n)}{n 2^n} = \gamma_w^2$.
\end{proof}

\begin{corollary}\label{cor:general_asymptotics}
Let $w: \mathbb{Z}^+ \to \mathbb{R}_{\ge 0}$ be a sequence of non-negative weights. Define the constant $\gamma_w = \sum_{t=1}^\infty w(t) q_t$, and assume this series converges. Let $F_w(n) = \sum_{t=1}^\infty w(t) c_t(n)$. Then, as $n \to \infty$:
\begin{enumerate}
    \item $F_w(n) \sim \gamma_w 2^n$.
    \item $S_w(n) := \sum_{m=1}^{n-1} F_w(m) \sim \gamma_w 2^n$.
    \item $(F_w * F_w)(n) := \sum_{m=1}^{n-1} F_w(m) F_w(n-m) \sim \gamma_w^2 n 2^n$.
\end{enumerate}
\end{corollary}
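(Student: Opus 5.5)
This corollary is a restatement of \cref{thm:general_asymptotics} in the language of asymptotic equivalence. The plan is to translate each item of that theorem into a statement that a ratio tends to $1$. Recall that for positive sequences $f(n)\sim g(n)$ means $f(n)/g(n)\to 1$. The three limits needed are:
\[
\lim_{n\to\infty} \frac{F_w(n)}{2^n}=\gamma_w, \qquad \lim_{n\to\infty}\frac{S_w(n)}{2^n}=\gamma_w, \qquad \lim_{n\to\infty}\frac{(F_w*F_w)(n)}{n2^n}=\gamma_w^2 .
\]
The proof of \cref{thm:general_asymptotics} already establishes each of these, by matching the $\limsup$ and $\liminf$ bounds.

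The first step is to check that $\gamma_w>0$. Only then does dividing by $\gamma_w 2^n$ (respectively $\gamma_w^2 n2^n$) make sense, and only then is $\sim$ a meaningful statement, since $\sim 0$ is degenerate. This is the one real obstacle: the corollary as written allows $w\equiv 0$, in which case $\gamma_w=0$. I would handle it by assuming, implicitly or explicitly, that $w$ is not identically zero. Since $q_t>0$ for every $t$ (by \cref{def:q_t} it is a product of positive factors, because $2^i-i-1>0$ for $i\ge 2$), $w(t_0)>0$ for some $t_0$ gives $\gamma_w\ge w(t_0)q_{t_0}>0$. If $\gamma_w=0$, I would note that every statement holds in the degenerate sense that the quantity is $o(2^n)$ or $o(n2^n)$.

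With $\gamma_w>0$, the conclusion is immediate. Dividing the first limit by $\gamma_w$ gives $F_w(n)/(\gamma_w 2^n)\to 1$, which is item 1. The same division gives item 2 for $S_w$. Dividing the third limit by $\gamma_w^2>0$ gives item 3. Equivalently, each part of the theorem has the form $X(n)=(c-o(1))Y(n)$ with $c>0$, so $X(n)/(cY(n)) = 1-o(1)/c\to 1$. No further estimates on $c_t(n)$ are needed.
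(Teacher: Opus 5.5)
Your proposal is correct and matches the paper, which gives no separate argument for the corollary and reads it off directly from the three limits $F_w(n)/2^n\to\gamma_w$, $S_w(n)/2^n\to\gamma_w$, and $(F_w*F_w)(n)/(n2^n)\to\gamma_w^2$ established in the proof of \cref{thm:general_asymptotics}. Your caveat that $\sim$ requires $\gamma_w>0$ is correct and is left implicit in the paper; since every $q_t>0$, this is equivalent to $w\not\equiv 0$.
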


\section{Multiplex 2-Ball} \label{sec:multiplex-2-ball}

\subsection{Preliminaries}

Multiplex juggling is a variation of normal juggling that allows for the simultaneous toss of multiple balls in a single time beat. Unlike standard juggling, where each throw and catch involves only one ball at a time, multiplex juggling introduces the possibility of throwing multiple balls within the same hand. In other words, we increase the capacity of our juggling hands, unlocking for example the transition $\langle 1, 0, 1 \rangle \to \langle 0, 2 \rangle$.

Concretely, we can set the $k$apacity of each hand to be $k \leq b$. Hence, a state $\sigma$ is not restricted anymore to having only 1's, as now, $a_i \in \{ 0, 1, 2, \dots, k \}$. However, we still have the condition that $\sum_i a_i = b$.

For example, if we set $b=3$ and $k=2$, we can have the state $\sigma = \langle 0, 2, 1 \rangle$. After one beat, all balls fall, so $\sigma \to \sigma'$ where $\sigma' = \langle 2, 1 \rangle$. Then, we have infinitely many choices for $\sigma''$ such that $\sigma' \to \sigma''$ since the two balls in the first position can go anywhere.  Some possible states include $\langle 1, 0, 2 \rangle$, $\langle 1, 1, 0, 1 \rangle$, and $\langle 2, 1 \rangle$. For all of them, $a_1 \geq 0$, and the only throw that would not be allowed is $\langle 3 \rangle$, as $k= 2$ does not allow $3$ balls to be in the same beat.

\cref{fig:b3-m2-diagram} shows some possible transitions in $\mathcal{M}(3,n,2)$.

\begin{figure}[htb]
\centering
\includegraphics[scale=1.25]{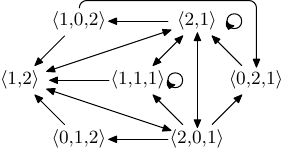}
\caption{A portion of the state diagram when $b=3$ and $k=2$.}
\label{fig:b3-m2-diagram}
\end{figure}

This section will focus in 2-ball multiplex prime patterns. We aim to expand the method used in \cite{Banaian2016} in order to find a count for $M'(2,n)$. 

As we are only going to look at $b=2$, we will assume that $k$ is always $2$ (as $k= 1$ is equivalent to normal juggling). Hence, all states will contain either one $2$ and possibly $0$'s or two $1$'s and possibly $0$'s. For example, this would be a valid 2-ball multiplex juggling pattern:
\begin{align*}
    &\langle 0, 0, 2 \rangle \to \langle 0, 2 \rangle \to \langle 2 \rangle \to \langle 0, 1, 0, 0, 1 \rangle \to \langle 1, 0, 0, 1 \rangle \\ 
    & \to  \langle 0, 1, 1 \rangle \to \langle 1, 1 \rangle \to \langle 1, 0, 0, 0, 1 \rangle \to \langle 0, 0, 0, 2 \rangle \to \langle 0, 0, 2 \rangle
\end{align*}

For the graphical representation, we will add double lines and double circles to represent states and transitions with two balls. As an example, \cref{fig:mpx-example} shows the pattern described above, but graphically.

\begin{figure}[H]
\centering
\begin{tikzpicture}
\multiplexexample
\end{tikzpicture}
\caption{An example of a 2-ball multiplex pattern.}
\label{fig:mpx-example}
\end{figure}
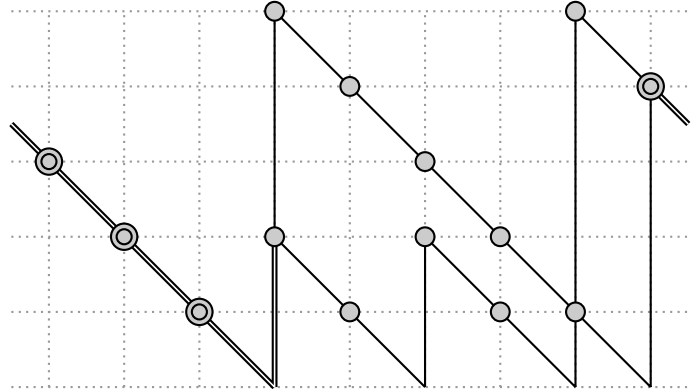

Furthermore, we will introduce a new notation for multiplex 2-ball states that will simplify future processes:
\[
    \sigma_{(i,j)} = \langle \underbrace{
    \underbrace{0, \dots, 0, 1}_{i}, 0, \dots, 0, 1}_{j}\rangle
\]
where $1 \leq i \leq j$. Essentially, each ``coordinate'' denotes the position of one ball. The definition shows the case when $i < j$ but in multiplex juggling, we can define that $i = j$ when both the first and second balls are indistinguishably in the same position:
\[
    \sigma_{(i,i)} = \langle \underbrace{
    \underbrace{0, \dots, 0, 2}_{i}}_{i}\rangle
\]

We propose a generalization of this notation that could be useful for future analysis of multiplex $b$-ball prime patterns:
\[
    \sigma_{(i, j, k, \dots)} = \langle \underbrace{\underbrace{\underbrace{\underbrace{0, \dots, 0, 1}_{i}, 0, \dots, 0, 1}_{j}, 0, \dots, 0, 1}_{k}, \dots}_{\dots}
    \rangle
\]

\subsection{Set Notation}

In this section, we will follow a similar process to the one used to solve the number of normal 2-ball juggling patterns. In order to simplify, we will often omit the fact that all patterns in this section are 2-ball patterns.

A first realization is that $\mathcal{N}'(2,n) \subseteq \mathcal{M}'(2,n)$, because any normal pattern can be considered a multiplex pattern and primeness is preserved.  This leads to the following definition:

\begin{definition} \label{def:mpx-strict}
    Let $\rho$ denote a juggling pattern. The \textbf{set of strict-multiplex patterns}, denoted $\mathcal{M}'(2,n)$, is defined as follows:
    \[
        \mathcal{M}' \setminus \mathcal{N}' (2,n) = \{ \rho \in \mathcal{M}'(2,n) \mid \rho \notin \mathcal{N}'(2,n) \},
    \]
    where $\mathcal{M}'(2,n)$ represents the set of all multiplex 2-ball prime patterns, and $\mathcal{N}'(2,n)$ represents the set of all normal 2-ball prime patterns.
\end{definition}

This allows us to count only the new patterns added to $\mathcal{M}'(2,n)$ by increasing hand capacity. Now, we introduce a fundamental property of these multiplex patterns.

\begin{lemma} \label{lem:mpx-2-state}
    A 2-ball strict multiplex prime pattern contains the state $\langle 2 \rangle$ exactly once.
\end{lemma}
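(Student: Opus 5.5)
A strict multiplex pattern is by definition not a normal pattern. So the plan is two steps: first show it must pass through some state $\sigma_{(i,i)}$ and then reach $\langle 2\rangle = \sigma_{(1,1)}$; then use primeness to show $\langle 2\rangle$ cannot appear twice.

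\textbf{Existence.} Let $\rho \in \mathcal{M}'\setminus\mathcal{N}'(2,n)$, written as a closed walk of states in the coordinate notation $\sigma_{(i,j)}$, $1\le i\le j$.

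If every state had $i<j$, then every entry would lie in $\{0,1\}$. The transition rule $\alpha_{i+1}\le\beta_i$ with sum $2$ would then coincide with the normal rule, so $\rho$ would lie in $\mathcal{N}'(2,n)$, a contradiction. Hence some state of $\rho$ is $\sigma_{(i,i)}$ for some $i\ge 1$.

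Next I would record the transitions out of $\sigma_{(i,j)}$:
\begin{itemize}
\item if $i\ge 2$, the only move is the forced shift $\sigma_{(i,j)}\to\sigma_{(i-1,j-1)}$;
\item if $i=1<j$, one ball is thrown, giving $\sigma_{(j-1,m)}$ or $\sigma_{(m,j-1)}$ for any $m$;
\item from $\sigma_{(1,1)}$, both balls are rethrown to any $\sigma_{(a,b)}$.
\end{itemize}
Starting at $\sigma_{(i,i)}$ with $i\ge2$, the forced shifts give $\sigma_{(i,i)}\to\sigma_{(i-1,i-1)}\to\cdots\to\sigma_{(1,1)}=\langle 2\rangle$. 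Since $\rho$ is a closed walk, it follows this forced path, so $\langle 2\rangle$ occurs in $\rho$.

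\textbf{Uniqueness.} A prime pattern visits $n$ distinct states before returning to its start, so no state can appear twice in the cycle. In particular $\langle 2\rangle$ appears at most once, and combined with existence it appears exactly once.

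The main obstacle is only formal: justifying that the walk really passes from $\sigma_{(i,i)}$ down to $\sigma_{(1,1)}$. The shift from a state whose first entry is $0$ is deterministic, so the walk has no alternative. Since the cycle is closed and every state is followed by its successor, the forced path from $\sigma_{(i,i)}$ lies inside $\rho$ and reaches $\langle 2\rangle$.
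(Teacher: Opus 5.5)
Your proof is correct and follows the same route as the paper. A pattern with no $\sigma_{(i,i)}$ would be normal, so one exists; the forced shifts then carry it down to $\langle 2\rangle$, and primeness gives uniqueness. You also make explicit two points the paper leaves implicit: that on $\{0,1\}$-states the multiplex transition rule coincides with the normal one, and that the closed walk must contain the entire forced descent.
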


\begin{proof}
    Let $\rho$ be a strict multiplex prime pattern. By primeness, each state $\langle 2 \rangle$ can appear at most once within $\rho$. Now it remains to prove that it appears at least once.

    Suppose initially that no state in $\rho$ takes the form $\sigma_{i,i}$. Then, every state must be of the form $\sigma_{i,j}$ where $i < j$. This condition implies that $\rho \in \mathcal{N}'(2,n)$ and consequently $\rho \notin \mathcal{M}' \setminus \mathcal{N}'(2,n)$, which leads to a contradiction.

    Therefore, there exists at least one state $\sigma_{i,i} \in \rho$ for some $i$. If $i = 1$, then $\sigma_{1,1} = \langle 2 \rangle$ appears in $\rho$. If not, the only possible transition from $\sigma_{i,i}$ is $\sigma_{i,i} \to \sigma_{i-1,i-1} \to \dots \to \sigma_{1,1} = \langle 2 \rangle$.
\end{proof}

\cref{lem:mpx-2-state} implies that a strict multiplex pattern can never be written using $C_0$, $C_1$ and $C_2$ cards, because we need cards that represent throws involving two balls. 

We introduce four new cards, $D_0$, $D_a$, $D_b$ and $D_c$ that represent all possible throws between states involving multiplex with two balls. These cards are graphically represented in \cref{fig:mpx-cards}, where a thick line represents two balls being in the same beat.

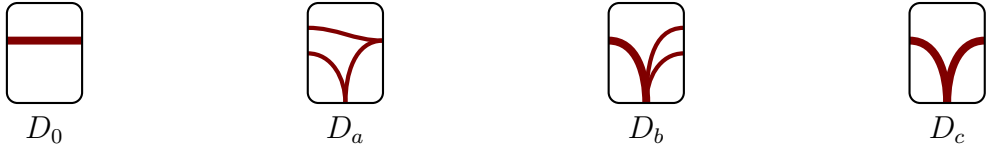
\begin{figure}[h]
\centering
\begin{tikzpicture}
\cardE{-4}0
\node at (-3.5,0) [below = 1] {$D_0$};
\cardD{0}0
\node at (0.5,0) [below = 1] {$D_a$};
\cardF{4}0
\node at (4.5,0) [below = 1] {$D_b$};
\cardG{8}0
\node at (8.5,0) [below = 1] {$D_c$};
\end{tikzpicture}
\caption{New cards used in strict multiplex patterns}
\label{fig:mpx-cards}
\end{figure}


\begin{itemize}
    \item $D_0$ represents the transition $\sigma_{(i,i)} \to \sigma_{(i-1,i-1)}$.
    \item $D_a$ represents the transition $\sigma_{(1,i)} \to \sigma_{(i-1,i-1)}$.
    \item $D_b$ represents the transition $\sigma_{(1,1)} \to \sigma_{(i,j)}$, with $i < j$.
    \item $D_c$ represents the transition $\sigma_{(1,1)} \to \sigma_{(i,i)}$.
\end{itemize}

The $D_c$ card is a rare card among prime patterns. Given a fixed length $n$, there is exactly one $\rho \in \mathcal{M}'\setminus \mathcal{N}' (2,n)$ such that $D_c \in \rho$. That pattern is
\[
    \sigma_{(n,n)} \to \sigma_{(n-1,n-1)} \to \dots \to \sigma_{(1, 1)} \to \sigma_{(n,n)}
\]
Hence, we can count all patterns such that $D_c \notin \rho$. Any prime pattern will have a sequence of cards with the form $D_a, D_0, D_0, \dots, D_0, D_b$. Suppose we place those balls at the end of the pattern.

\subsection{Main Theorem}

\begin{theorem} \label{thm:mpx-strict-formula}
    Let $M' \setminus N' (n,2)$ be the number of 2-ball strict multiplex prime juggling patterns of period $n$. Then
    \[
        M'(n,2) = \sum_{t} \left( \sum_{m=1}^{n-1} t \cdot c_t(m) \right) + 1
    \]
\end{theorem}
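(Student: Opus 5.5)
The plan is to split $\mathcal{M}'\setminus\mathcal{N}'(2,n)$ according to whether the card $D_c$ occurs. We have already seen that exactly one pattern uses $D_c$, namely $\sigma_{(n,n)}\to\sigma_{(n-1,n-1)}\to\cdots\to\sigma_{(1,1)}\to\sigma_{(n,n)}$; this gives the $+1$ term. It then remains to show that the strict multiplex prime patterns of period $n$ avoiding $D_c$ are in bijection with triples $(m,X,i)$. Here $1\le m\le n-1$, $X\in\mathcal{X}_{t,m}$ for some $t$, and $i\in\{1,\dots,t\}$ marks one of the $t$ sets, equivalently one of the $t$ $C_2$ cards, via \cref{lem:pattern-sets-bijection}. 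Summing $t\cdot c_t(m)$ over $t$ and $m$ then gives the formula, using \cref{def:c_t(n)}.

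First I describe the structure of such a pattern $\rho$. By \cref{lem:mpx-2-state}, $\rho$ passes through $\langle 2\rangle=\sigma_{(1,1)}$ exactly once. Since $D_c$ is excluded, the edge leaving $\sigma_{(1,1)}$ is a $D_b$ card going to some $\sigma_{(a,b)}$ with $a<b$. Tracing backwards, the only states with an edge into $\sigma_{(1,1)}$ are $\sigma_{(2,2)}$ (via $D_0$) and $\sigma_{(1,2)}$ (via $D_a$). Repeating this, the states preceding $\sigma_{(1,1)}$ form a block $\sigma_{(1,\ell)}\xrightarrow{D_a}\sigma_{(\ell-1,\ell-1)}\xrightarrow{D_0}\cdots\xrightarrow{D_0}\sigma_{(1,1)}\xrightarrow{D_b}\sigma_{(a,b)}$. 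All other transitions involve only states with $i<j$, so they are ordinary $C_0,C_1,C_2$ cards. Consequently, $\rho$ consists of one multiplex block, of length $\ell$ counting the $D_a$, the $D_0$'s and the $D_b$, followed by a normal path of length $n-\ell$ from $\sigma_{(a,b)}$ to $\sigma_{(1,\ell)}$ with distinct states. The diagonal states of the block can never coincide with normal states, so primeness of $\rho$ reduces to the normal path having distinct states.

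Next I define the contraction map. Replace the whole multiplex block by a single $C_2$ card $\sigma_{(1,\ell')}\to\sigma_{(a',b')}$, where the shift of heights is chosen so that the spacing sets of \cref{lem:pattern-sets-bijection} are preserved. The result is a normal prime pattern of length $m=n-(\text{block length})+1$, together with a distinguished $C_2$ card, and hence a distinguished set $S_i$. Going the other way, start from a normal prime pattern of length $m$ and a marked $C_2$ card. Inflating that card into a $D_a D_0\cdots D_0 D_b$ block of the appropriate length produces a unique strict pattern of period $n$. The point is that the ``collapsed'' portion behaves like a spacing-$0$ stretch whose length is free, constrained only by $m\le n-1$. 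I would verify the bijection by translating both directions into the spacing-set language of \cref{lem:pattern-sets-bijection}. The cyclic equivalence of $(S_1,\dots,S_t)$ is broken exactly by the marking, which accounts for the factor $t$: each orbit of size $t$ becomes $t$ marked objects. If some rotation fixes a tuple, the count of marked objects is still $t$ times the count of distinct tuples, as long as the marking is taken as a position in a fixed representative.

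The main obstacle is the middle step: showing that the heights in the normal segment can be consistently shifted when the block is contracted and re-expanded, and that the inflation length can be chosen to reach exactly $n$ for every $m<n$, bijectively, without creating repeated states. My first attempt would be to work entirely with the spacing sets, where the normal states are determined by spacings and the $D$-block only adds a run of beats during which both balls share a beat. If that fails, I would instead check the small cases $n\le 5$ by hand against the formula to pin down the correct length correspondence before writing out the general argument.
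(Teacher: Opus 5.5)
Your overall architecture is the paper's: peel off the unique $D_c$ pattern for the $+1$, then match the remaining strict patterns with normal prime patterns of smaller period in which one of the $t$ $C_2$ cards is marked. The gap is in the step you flag as the main obstacle, and it is not just unfinished but mis-specified. You set $m=n-(\text{block length})+1=n-\ell+1$ and treat the spacing-$0$ stretch as having free length. But the $D_a$ card out of $\sigma_{(1,\ell)}$ necessarily lands at $\sigma_{(\ell-1,\ell-1)}$. So the number of $D_0$ cards is forced to be $\ell-2$, and $\ell-1$ is the smallest spacing of the set preceding the block, which the spacing sets already determine.

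Your length formula therefore contradicts your own requirement that the spacing sets be preserved. Two period-$3$ examples show this:
\begin{itemize}
\item The pattern $\sigma_{(1,1)}\to\sigma_{(1,3)}\to\sigma_{(2,2)}\to\sigma_{(1,1)}$ has $\ell=3$, so you would assign $m=1$. Yet its only spacing set is $\{2\}$, which gives a normal pattern of period $2$.
\item The pattern $\sigma_{(1,1)}\to\sigma_{(2,3)}\to\sigma_{(1,2)}\to\sigma_{(1,1)}$ has $\ell=2$, so you would assign $m=2$. Yet its spacing set is $\{1\}$, which gives period $1$.
\end{itemize}
In the reverse direction, ``inflating to the appropriate length'' is impossible whenever $n-m+1\neq\ell$.

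The missing idea is that the free parameter is the height $a$ of the lower ball after the $D_b$ throw $\sigma_{(1,1)}\to\sigma_{(a,a+M)}$, where $M$ is the maximum of the next set. Compare the two routes to $\sigma_{(1,1+M)}$:
\begin{itemize}
\item In $\rho$, getting from $\sigma_{(1,\ell)}$ to $\sigma_{(1,1+M)}$ takes $1+(\ell-2)+1+(a-1)=\ell+a-1$ beats.
\item In the contracted pattern, the $C_2$ throw $\sigma_{(1,\ell)}\to\sigma_{(\ell-1,\ell-1+M)}$ followed by $C_0$ cards reaches $\sigma_{(1,1+M)}$ in $\ell-1$ beats.
\end{itemize}
So the spacing-$0$ stretch exactly mirrors the descent of the lower ball after that $C_2$ throw, and contraction shortens the period by exactly $a$: $m=n-a$.

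Conversely, take a normal prime pattern of period $m<n$ with a marked $C_2$ card. Everything in the block is then forced except $a=n-m\ge 1$, which ranges freely. Primeness is preserved because each spacing still occurs in only one run and the diagonal states are new. This is precisely the paper's statement that $\rho$ yields $\rho'\in\mathcal{N}'(n-i,2)$, where $\sigma_{(i,j)}$ is the state after $D_b$.

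With this correction the rest of your plan (the factor $t$ from the marking, and the $+1$) goes through. Your worry about rotations fixing a tuple is moot, since the maxima of the sets are distinct.
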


\begin{proof}[Proof of \cref{thm:mpx-strict-formula}]
    Let $\rho \in \mathcal{M}' \setminus \mathcal{N}' (n, 2)$. Notice that we can see this juggling pattern that begins with the state $\sigma_{(1, 1)}$, immediately after it transitions to $\sigma_{(i, j)}$ and eventually reaches $\sigma_{(1,j - i +1)}$. Then behaves as a normal prime pattern, and we can describe this behavior through sets $S_1, S_2, \dots, S_t$. Finally, when the smallest space of the last spacing happens, say a space $k$, the transition $\sigma_{(1,1+k)} \to \sigma_{(k,k)}$, which eventually reaches $\sigma_{(1,1)}$ and closes the cycle. 

    Notice how here, if $i = 1$, we started with a pattern $\rho \in \mathcal{M}' \setminus \mathcal{N}' (n, 2)$ and we obtained sets $S_1, S_2, \dots, S_t$ that define a pattern $\rho' \in \mathcal{N}'(n - 1, 2)$ (as we remove the multiplex throw). However, for any $i$, a pattern $\rho \in \mathcal{M} \setminus \mathcal{N}' (n, 2)$ gives  sets $S_1, S_2, \dots, S_t$ that define a pattern $\rho' \in \mathcal{N}'(n - i, 2)$.

    Reasoning backwards, for all $i > 0$ we can take a pattern $\rho' \in \mathcal{N}'(n - i, 2)$, obtain its sets $S_1, S_2, \dots, S_t$ and place the multiplex throw in $t$ different positions (because of the cyclic structure), obtaining $t$ different patterns $\rho_j \in \mathcal{M}' \setminus \mathcal{N}' (n, 2)$ for $j = 1, 2, \dots, t$. The theorem follows directly from the uniqueness of these $\rho_j$ given an initial $\rho'$, and the $+1$ comes from the pattern $\sigma_{(n,n)} \to \dots \to \sigma_{(1,1)}$.
\end{proof}

Given \cref{thm:mpx-strict-formula}, the following follows directly

\begin{corollary} \label{cor:mpx-formula}
    Let $M'(n,2)$ be the number of 2-ball multiplex prime juggling patterns of period $n$. Then
    \[
        M'(n,2) = \sum_t \left( c_t(n) + \sum_{m=1}^{n-1} t \cdot c_t(m) \right) + 1
    \]
\end{corollary}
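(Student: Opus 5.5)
The plan is to split $\mathcal{M}'(2,n)$ into two disjoint pieces and count each one separately. Since $\mathcal{N}'(2,n)\subseteq\mathcal{M}'(2,n)$, we can write $\mathcal{M}'(2,n)$ as the disjoint union of $\mathcal{N}'(2,n)$ and the strict-multiplex set $\mathcal{M}'\setminus\mathcal{N}'(2,n)$ of \cref{def:mpx-strict}. This gives
\[
M'(n,2) = N'(2,n) + \bigl|\mathcal{M}'\setminus\mathcal{N}'(2,n)\bigr|.
\]

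For the first piece, \cref{lem:pattern-sets-bijection} says that normal prime patterns with $t$ copies of the $C_2$ card correspond bijectively to elements of $\mathcal{X}_{t,n}$. Summing over $t$ and using \cref{def:c_t(n)} gives $N'(2,n)=\sum_t c_t(n)$. This is the term $\sum_t c_t(n)$ in the corollary. Note that $c_t(n)=0$ as soon as $t(t+1)/2>n$, so the sum is finite.

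For the second piece, \cref{thm:mpx-strict-formula} counts the strict-multiplex patterns as $\sum_t\sum_{m=1}^{n-1} t\,c_t(m)+1$. (Its displayed left-hand side reads $M'$, but its hypothesis and proof are about $M'\setminus N'$.) To justify this count, I recall the structure the proof of \cref{thm:mpx-strict-formula} uses. By \cref{lem:mpx-2-state}, every strict pattern passes through $\langle 2\rangle$ exactly once. Either it uses the single $D_c$ card, which yields the unique pattern $\sigma_{(n,n)}\to\cdots\to\sigma_{(1,1)}\to\sigma_{(n,n)}$ and accounts for the $+1$, or it contains exactly one block $D_a D_0\cdots D_0 D_b$. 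Deleting that block leaves a normal prime pattern of length $m=n-i$ with $1\le i\le n-1$. Conversely, the block can be reinserted at any one of the $t$ cyclic positions between consecutive spacing sets $S_1,\dots,S_t$. Hence each element of $\mathcal{X}_{t,m}$ contributes $t$ strict patterns, giving $\sum_{m=1}^{n-1}\sum_t t\,c_t(m)$.

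Adding the two pieces and exchanging the finite sums over $t$ and $m$ gives
\[
M'(n,2)=\sum_t\Bigl(c_t(n)+\sum_{m=1}^{n-1}t\,c_t(m)\Bigr)+1.
\]

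The main obstacle is the bookkeeping rather than the combinatorics. First, I must make sure that \cref{thm:mpx-strict-formula} is being used for the strict count only, and not already including $N'$; otherwise the normal patterns would be counted twice. Second, I must check that the two classes really are disjoint. This follows directly from \cref{lem:mpx-2-state}: every strict pattern visits $\langle 2\rangle$, and no normal pattern does.
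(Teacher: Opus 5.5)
Your proposal is correct and is exactly the paper's argument: split $\mathcal{M}'(2,n)$ into $\mathcal{N}'(2,n)$ and the strict part, count the former as $\sum_t c_t(n)$ via \cref{lem:pattern-sets-bijection}, and take the latter from \cref{thm:mpx-strict-formula}, whose displayed left-hand side is indeed a typo for $M'\setminus N'(n,2)$. Disjointness holds by definition of the set difference, so invoking \cref{lem:mpx-2-state} there is harmless but not needed.
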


\subsection{Asymptotics}

We present the exact values of $M'(n,2)$ for small $n$ in \cref{tab: mpx-data}.

\begin{table}[H]
    \centering
    \begin{tabular}{c@{\hskip 1in}c@{\hskip 1in}c}
        \begin{tabular}{|c|c|}
            \hline
            $n$ & $M'(n,2)$ \\
            \hline
            1 & 2 \\ \hline
            2 & 4 \\ \hline
            3 & 9 \\ \hline
            4 & 20 \\ \hline
            5 & 45 \\ \hline
            6 & 100 \\ \hline
            7 & 223 \\ \hline
            8 & 484 \\ \hline
            9 & 1053 \\ \hline
            10 & 2258 \\ \hline
        \end{tabular}
        &
        \begin{tabular}{|c|c|}
            \hline
            $n$ & $M'(n,2)$ \\
            \hline
            11 & 4827 \\ \hline
            12 & 10198 \\ \hline
            13 & 21505 \\ \hline
            14 & 44920 \\ \hline
            15 & 93687 \\ \hline
            16 & 194072 \\ \hline
            17 & 401061 \\ \hline
            18 & 824710 \\ \hline
            19 & 1693027 \\ \hline
            20 & 3460930 \\ \hline
        \end{tabular}
        &
        \begin{tabular}{|c|c|}
            \hline
            $n$ & $M'(n,2)$ \\
            \hline
            21 & 7064961 \\ \hline
            22 & 14377628 \\ \hline
            23 & 29219511 \\ \hline
            24 & 59240884 \\ \hline
            25 & 119980813 \\ \hline
            26 & 242531914 \\ \hline
            27 & 489839523 \\ \hline
            28 & 987879134 \\ \hline
            29 & 1990834305 \\ \hline
            30 & 4007533072 \\ \hline
        \end{tabular}
    \end{tabular}
    \caption{$M'(n,2)$ for $1 \leq n \leq 30$}
    \label{tab: mpx-data}
\end{table}

We now determine the asymptotic behavior of $M'(n,2)$ by applying the General Asymptotic Theorem (\cref{thm:general_asymptotics}) and its corollary (\cref{cor:general_asymptotics}).

Recall from \cref{thm:mpx-strict-formula} that the number of strict multiplex patterns is given by
\[
M' \setminus N' (n,2) = \sum_{m=1}^{n-1} \left( \sum_{t=1}^\infty t \cdot c_t(m) \right) + 1
\]
We define the weight function $w(t) = t$. The associated constant is
\[
\gamma_{M' \setminus N'} = \sum_{t=1}^\infty t \cdot q_t
\]
Using the definition of $q_t$, this series converges. We identify the inner sum of the expression for $M' \setminus N' (n,2)$ as $F_w(m)$ and the outer sum as $S_w(n)$ from \cref{cor:general_asymptotics}.

\begin{lemma} \label{lem:mpx-strict-asymptotics}
The number of strict multiplex prime patterns satisfies
\[
M' \setminus N' (n,2) \sim \gamma_{M' \setminus N'} 2^n
\]
as $n \to \infty$.
\end{lemma}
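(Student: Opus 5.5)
The plan is to read the lemma off from part 2 of \cref{cor:general_asymptotics} with the weight $w(t)=t$, and then check that the additive constant $+1$ does not affect the asymptotics.

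First, rewrite the count from \cref{thm:mpx-strict-formula} by swapping the order of summation:
\[
M'\setminus N'(n,2)=\sum_{m=1}^{n-1}\sum_{t\ge 1} t\,c_t(m)+1=\sum_{m=1}^{n-1}F_w(m)+1=S_w(n)+1 .
\]
The swap is legitimate because every term is non-negative. In fact each inner sum is finite, since $c_t(m)=0$ once $t(t+1)/2>m$: there is no partition of $m$ into $t$ distinct parts.

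Second, verify the hypothesis of the General Asymptotic Theorem, namely that $\gamma_w=\sum_t t\,q_t$ converges. By \cref{def:q_t}, $q_t/q_{t-1}=(t-1)/(2^t-t-1)$. Hence
\[
\frac{t\,q_t}{(t-1)q_{t-1}}=\frac{t}{2^t-t-1}\to 0 ,
\]
and the ratio test gives convergence. One can also bound $q_t\le \tfrac12\prod_{i\le t} i/2^{i-1}$, which decays superexponentially. This is the only step with any content, and it is routine.

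Third, apply \cref{cor:general_asymptotics}(2) to get $S_w(n)\sim\gamma_w 2^n$. Since $\gamma_w\ge q_1=1/2>0$, we have $1/(\gamma_w 2^n)\to 0$, so $(S_w(n)+1)/(\gamma_w 2^n)\to 1$. This gives $M'\setminus N'(n,2)\sim\gamma_{M'\setminus N'}2^n$.

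I do not expect a real obstacle. The only points needing care are the convergence check and noting that $\gamma_w>0$, so that the $+1$ is negligible.
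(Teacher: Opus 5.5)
Your proposal is correct and follows the paper's proof: you write the count as $S_w(n)+1$ with $w(t)=t$, apply part 2 of \cref{cor:general_asymptotics}, and note that the $+1$ is negligible. The only difference is that you spell out two things the paper merely asserts, namely the convergence of $\sum_t t\,q_t$ via the ratio test and the bound $\gamma_w \ge 1/2 > 0$ that makes the $+1$ harmless.
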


\begin{proof}
Let $w(t) = t$. By definition, $F_w(m) = \sum_{t=1}^\infty t c_t(m)$. The expression for strict multiplex patterns is
\[
M' \setminus N' (n,2) = \sum_{m=1}^{n-1} F_w(m) + 1 = S_w(n) + 1
\]
By \cref{cor:general_asymptotics}, part 2, we have $S_w(n) \sim \gamma_{M' \setminus N'} 2^n$. Since the constant term $1$ is negligible compared to $2^n$, the result follows.
\end{proof}

We now consider the total count of multiplex prime patterns, $M'(n,2)$.

\begin{theorem}\label{thm:mpx-asymptotics}
The number of multiplex 2-ball prime juggling patterns satisfies
\[
M'(n,2) \sim \gamma_{M'} 2^n
\]
where
\[
\gamma_{M'} = \sum_{t=1}^\infty (t+1) q_t
\]
\end{theorem}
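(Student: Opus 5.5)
The plan is to split $M'(n,2)$ into its normal and strict-multiplex parts and apply \cref{cor:general_asymptotics} to each. By \cref{cor:mpx-formula} we have
\[
M'(n,2) = \sum_{t} c_t(n) + \left(\sum_{m=1}^{n-1}\sum_t t\, c_t(m) + 1\right) = F_{\mathbf 1}(n) + M'\setminus N'(n,2),
\]
where $F_{\mathbf 1}$ is $F_w$ with constant weight $w(t)=1$. This is exactly $P'(2,n)$, the normal prime count.

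For the first term I will use part 1 of \cref{cor:general_asymptotics} with $w\equiv 1$, giving $F_{\mathbf 1}(n)\sim \gamma 2^n$ with $\gamma=\sum_t q_t$. This is also \cref{thm:P2n_asymptotics}. For the second term I will invoke \cref{lem:mpx-strict-asymptotics}, which gives $M'\setminus N'(n,2)\sim \big(\sum_t t q_t\big)2^n$. Since both summands are nonnegative and each is asymptotic to a constant times $2^n$, their sum is asymptotic to the sum of the constants times $2^n$. That constant is $\sum_t (1+t)q_t=\gamma_{M'}$, provided the rearrangement is legitimate.

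Alternatively, and more cleanly, I can take the single weight $w(t)=t+1$ and write
\[
M'(n,2) = F_w(n) - \sum_t t\,c_t(n) + S_{w'}(n)+1
\]
with $w'(t)=t$. However, the direct splitting above is simpler, and I will use it.

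The only point needing care is convergence of $\gamma_{M'}=\sum_t (t+1)q_t$, which is needed to apply the corollary and to add the two limits. From \cref{def:q_t}, $q_t/q_{t-1}=(t-1)/(2^t-t-1)\to 0$, so $q_t$ decays superexponentially. The ratio test then gives convergence of $\sum_t t q_t$ and of $\sum_t q_t$, hence of their sum. With that established, the limits $M'(n,2)/2^n\to \gamma+\gamma_{M'\setminus N'}=\gamma_{M'}$ follow by adding the two limits, and the $+1$ term is negligible. I expect no real obstacle beyond this bookkeeping.
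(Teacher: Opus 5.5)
Your proposal is correct and matches the paper's proof: both split $M'(n,2)=N'(n,2)+M'\setminus N'(n,2)$, apply part 1 of \cref{cor:general_asymptotics} with weight $1$ to the normal part and \cref{lem:mpx-strict-asymptotics} to the strict part, then add the constants $\sum_t q_t+\sum_t t q_t=\sum_t (t+1)q_t$. Your ratio-test argument for the convergence of $\sum_t (t+1)q_t$ spells out a step the paper only asserts.
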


\begin{proof}
By definition, $M'(n,2) = M' \setminus N'(n,2) + N'(n,2)$.
For the term $N'(n,2) = \sum_{t=1}^\infty c_t(n)$, we apply \cref{cor:general_asymptotics}, part 1, with the weight function $v(t) = 1$. This yields
\[
N'(n,2) \sim \left(\sum_{t=1}^\infty 1 \cdot q_t\right) 2^n = \gamma_{N'} 2^n
\]
Combining this with \cref{lem:mpx-strict-asymptotics}, we obtain
\[
M'(n,2) \sim \gamma_{M' \setminus N'} 2^n + \gamma_{N'} 2^n = (\gamma_{M' \setminus N'} + \gamma_{N'}) 2^n
\]
The constant is given by
\[
\gamma_{M'} = \sum_{t=1}^\infty t q_t + \sum_{t=1}^\infty q_t = \sum_{t=1}^\infty (t+1) q_t
\]
This completes the proof.
\end{proof}

We can compute the numerical value of the constant $\gamma_{M'}$ using the formula for $q_t$:
\[
\gamma_{M'} = \frac{1}{2}(1+1) + \sum_{t=2}^\infty (t+1) \frac{1}{2} \prod_{i=2}^t \frac{i-1}{2^i - i - 1}
\]
Using the identity $t! = t \prod_{i=2}^t (i-1)$, we can rewrite the terms to facilitate computation.

\section{Colored 2-Ball} \label{sec:colored-2-ball}

\subsection{Preliminaries}

Colored juggling is a variation of normal juggling where each ball is assigned a color. Unlike standard juggling, where the balls are typically indistinguishable from one another, colored juggling requires to not only keep track of the number of balls but also their specific colors. For example, in
\[
    \langle 0, 0, \textcolor{orange}{1}, 0, 0, \textcolor{blue}{1} \rangle \rightarrow \langle 0, \textcolor{orange}{1}, 0, 0, \textcolor{blue}{1}\rangle
\]
\[
    \langle 0, 0, \textcolor{orange}{1}, 0, 0, \textcolor{blue}{1}  \rangle \not\rightarrow \langle 0, \textcolor{blue}{1}, 0, 0, \textcolor{orange}{1} \rangle
\]
the first transition is valid because each balls comes down one position after a beat. The second one, however, is not valid because the balls that come down don't match the original balls.

For colored 2-ball, the first observation we make is that for something being a pattern, it's not sufficient that the states match at the end with the beginning, as we also need that the colors of the balls match.

\begin{lemma} \label{lem:clr-2-ball-even-cards}
A 2-ball colored juggling pattern contains an even number of $C_2$ cards.
\end{lemma}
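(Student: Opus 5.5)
The plan is to track the relative order of the two colored balls and show that only $C_2$ cards swap it. A 2-ball colored state (without multiplex) has the form $\sigma_{(i,j)}$ with $i<j$, together with a coloring that says which color sits at the lower position $i$ and which at the higher position $j$. Call this the \emph{order} of the state: it is one of two values, say ``orange below blue'' or ``blue below orange.''

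First I would check the effect of each card on the order.
\begin{itemize}
\item A $C_0$ card shifts both balls down one position. Both balls keep their colors, so the order is unchanged.
\item A $C_1$ card lands the lower ball, at position $1$, and rethrows it to relative position $1$. Its new height lies below the other ball's height $j-1$, so the order is unchanged.
\item A $C_2$ card rethrows the landing ball above the other ball. The ball that was lower becomes higher, so the order flips.
\end{itemize}
The color of each ball is carried along by every throw, because a throw moves the ball that landed. This is exactly the validity condition illustrated before the lemma.

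Next I would use periodicity. A colored juggling pattern is a closed walk, so the final colored state must equal the initial one. In particular the order at the end equals the order at the start. Each $C_2$ card flips the order and every other card preserves it, so after $m$ cards of type $C_2$ the order has been flipped $m$ times. Returning to the starting order therefore forces $m \equiv 0 \pmod 2$.

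The only real obstacle is making precise that $C_0$, $C_1$ and $C_2$ are all the transitions available. The key point is that the landing ball is always the lower one, the ball at position $1$. By the card definitions, $C_1$ places it below the other ball and $C_2$ places it above. The two balls can never coincide, since we are not in multiplex, so these two cases, together with the no-landing case $C_0$, are exhaustive.
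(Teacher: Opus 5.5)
Your proposal is correct and is essentially the paper's argument: $C_0$ and $C_1$ preserve which colored ball is lower, while $C_2$ swaps the two, so returning to the initial colored state forces an even number of $C_2$ cards. Your remark that $C_0$, $C_1$, $C_2$ are the only possible non-multiplex transitions spells out a point the paper takes for granted.
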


\begin{proof}
By describing a pattern using card notation, we are describing the relative position of the balls in each time beat. Note that the cards $C_0$ and $C_1$ do not change the relative position of the balls, and the $C_2$ always do it. Hence, in order to keep the relative order of the colored balls from the end to cycling to the beginning, an even number of $C_2$ cards are needed.
\end{proof}

Hence, a colored 2-ball pattern can be defined by a collection of $2n$ sets $S_1, S_2, \dots, S_{2t}$ up to rotational symmetry ($S_1 S_2 S_3$ is the same as $S_2 S_3 S_1$). The condition for primeness is that a state can be repeated only if the corresponding colors are different. For example $\langle 0, \textcolor{blue}{1}, 0, 0, \textcolor{orange}{1} \rangle \neq \langle 0, \textcolor{orange}{1}, 0, 0, \textcolor{blue}{1} \rangle$. Translating this into sets, for a prime pattern we know $a \in S_i, S_j \implies i \not\equiv j (\mod 2)$. Here, as proven in \cite{Banaian2016} $n = \sum_i^{2t} \max(S_i)$.

\begin{figure}[H]
\centering
\begin{tikzpicture}
    \coloredexample
    \draw [decorate,decoration={brace,amplitude=10pt,mirror}] 
    (-1.5,-0.25) -- (0.5,-0.25) node [black,midway,yshift=-0.6cm] {$S_1$};
    \draw [decorate,decoration={brace,amplitude=10pt,mirror}] 
    (0.5,-0.25) -- (2.5,-0.25) node [black,midway,yshift=-0.6cm] {$R_1$};
    \draw [decorate,decoration={brace,amplitude=10pt,mirror}] 
    (2.5,-0.25) -- (6.5,-0.25) node [black,midway,yshift=-0.6cm] {$S_2$};
    \draw [decorate,decoration={brace,amplitude=10pt,mirror}] 
    (6.5,-0.25) -- (8.5,-0.25) node [black,midway,yshift=-0.6cm] {$R_2$};

\end{tikzpicture}
\caption{An example of a colored pattern.}
\label{fig:clr-example}
\end{figure}
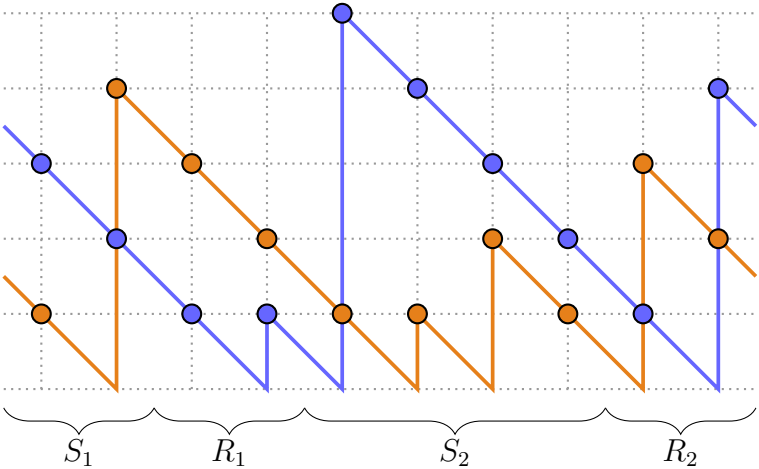

Another way to see this is renaming the initial sets: we can define two collections $\mathcal{S} = S_1, S_2, \dots, S_t$ and $\mathcal{R} = R_1, R_2, \dots, R_t$, which we assign to be the even and odd sets up to rotational symmetry. Here, the length of the pattern is
\[
    n = \sum_{i=1}^{t} \max(S_i) + \max(R_i)
\]
The condition for primeness for a colored prime pattern becomes
\[
    \forall \, i, j \quad S_i \cap S_j = \varnothing \quad \text{and} \quad R_i \cap R_j = \varnothing
\]

The above condition implies that, in a 2-ball prime colored pattern, both $\mathcal{S}$ and $\mathcal{R}$ have to represent normal prime juggling patterns with lengths $m$ and $n - m$, such that both have the same number of $C_2$ cards (which is equivalent as having the same number of partitions in the set notation), where $n$ is the length of the colored 2-ball prime juggling pattern.

Using this, we have converted out task of counting juggling patterns into a problem of counting collections of sets with certain conditions. We use slightly different methods for the cases when $n$ is even or $n$ is odd.

\subsection{Main Theorems}

\begin{theorem} \label{thm:clr-count-odd-length}
    The number of colored 2-ball prime juggling patterns of odd length is
    \[
        C'(2,n) = \sum_{m = 1}^{\frac{n-1}{2}} \sum_t t \cdot c_t(m) \cdot c_t(n-m)
    \]
\end{theorem}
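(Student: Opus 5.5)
We will count colored 2-ball prime patterns of odd length $n$ through the set description developed just above. Such a pattern is a cyclic sequence $S_1,R_1,S_2,R_2,\dots,S_t,R_t$ (there are $2t$ sets, by \cref{lem:clr-2-ball-even-cards}). The pattern is prime exactly when the $S_i$ are pairwise disjoint and the $R_i$ are pairwise disjoint. Its length is $n=\sum_i \max(S_i)+\sum_i\max(R_i)$.

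Set $m=\sum_i\max(S_i)$, so that $n-m=\sum_i\max(R_i)$. By \cref{lem:pattern-sets-bijection}, each of the cyclic tuples $(S_1,\dots,S_t)$ and $(R_1,\dots,R_t)$ is an element of $\mathcal{X}_{t,m}$ and of $\mathcal{X}_{t,n-m}$ respectively, i.e.\ a normal prime pattern with $t$ $C_2$ cards. The first step is to make this correspondence precise in the reverse direction: given $X\in\mathcal{X}_{t,m}$ and $Y\in\mathcal{X}_{t,n-m}$, we must count the interleavings $S_1R_1\cdots S_tR_t$ up to rotation of the whole $2t$-cycle.

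To do this, fix a linear representative of $X$. A rotation of the interleaved cycle by an even number of positions rotates both subsequences simultaneously. Hence the cyclic class of the interleaving is determined by the class of $X$, together with a choice of which of the $t$ rotations of $Y$ is placed after $S_1$. That gives $t$ choices. These choices are distinct provided no nontrivial even rotation fixes the interleaved sequence. Such a symmetry would force some $S_i=S_j$ with $i\ne j$, which is impossible because the $S_i$ are disjoint and nonempty. The same argument shows $X$ and $Y$ themselves have trivial stabilizers, so $|\mathcal{X}_{t,m}|$ really counts linear representatives divided by $t$.

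Rotations by an odd number of positions swap the roles of $\mathcal{S}$ and $\mathcal{R}$. Thus the ordered pair $(X,Y)$ and the pair $(Y,X)$ produce the same set of colored patterns, with the even-indexed and odd-indexed sets exchanged. This is where the odd-$n$ hypothesis is used: since $m\neq n-m$, we may canonically let $\mathcal{S}$ be the family whose maxima sum to the smaller value $m<n/2$. Each colored pattern is then counted exactly once with $1\le m\le (n-1)/2$. For each such $m$ and $t$ there are $t\cdot c_t(m)\cdot c_t(n-m)$ patterns, using \cref{def:c_t(n)}. Summing over $m$ and $t$ yields the formula.

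The main obstacle is the careful orbit count: showing that the $t$ interleavings are pairwise inequivalent under the full $\mathbb{Z}_{2t}$ rotation group, and that odd rotations are fully absorbed by the normalization $m<n-m$. I would first handle this by verifying directly that any rotation fixing an interleaved sequence must fix $X$ as a cyclic tuple, which is impossible by disjointness. I would also check the formula against small cases, e.g.\ $n=3$, as a sanity test. A secondary point is to confirm that every pair of valid families yields a genuine colored pattern, with the colors returning correctly; this holds because the total number of $C_2$ cards, $2t$, is even.
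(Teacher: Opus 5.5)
Your proposal is correct and follows essentially the same route as the paper. You split the $2t$ blocks into the alternating families $\mathcal{S},\mathcal{R}$, use $m\neq n-m$ (odd $n$) to orient so that $\mathcal{S}$ has the smaller total, and count $t\cdot c_t(m)\,c_t(n-m)$ interleavings per $(m,t)$; your handling of even versus odd rotations is more explicit than the paper's. One small wording fix: the $t$ interleavings are pairwise inequivalent not because no even rotation fixes an interleaving, but because an even rotation carrying one interleaving to another must fix the $S$-subsequence, and so is trivial by disjointness---which is exactly the trivial-stabilizer fact you already state.
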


\begin{proof}

We can assign each colored 2-ball prime juggling pattern to two collections $\mathcal{S}$ and $\mathcal{R}$ as defined above. Hence, we can count the number of ways to arrange those collections in order (up to cyclic permutation) to count the number of desired patterns.

First, we assume without loss of generality that the length of the pattern defined by $\mathcal{S}$ is $m$ and the length of the pattern defined by $\mathcal{R}$ is $n - m$ where $m < n-m$ (thus $m < \frac{n}{2} - 1$).

Given $m$, if we let $|\mathcal{S}| = |\mathcal{R}| = t$, we see there are $c_t(m)$ ways to arrange valid collections $\mathcal{S}$ and $c_t(n-m)$ ways to arrange valid collections $\mathcal{R}$.

As $\mathcal{S}$ and $\mathcal{R}$ describe different patterns, we know that there is at least one pair $i, j$ such that $S_i \neq R_j$. Furthermore, no two elements in $\mathcal{S}$ and $\mathcal{R}$ are equal. Hence, all $t$ rotations $\pi$ of $\mathcal{R}$ make the arrangement
\[
    S_1, R_{\pi(1)}, S_2, R_{\pi(2)}, \dots, S_t, R_{\pi(t)}
\]
different under rotational symmetry for each $\pi$, because the elements $S_i$ and $R_j$ will have different relative positions on it. Other way to see it is that we can place $R_1$ after $S_1$, or after $S_2$, etc, and each one will yield a different pattern.








    
    
    

\end{proof}

\begin{proposition} \label{prop:clr-count-even-length}
The number of colored 2-ball prime juggling patterns of even length is
\[
    C'(2,n) = \left( \sum_t \sum_{m = 1}^{\frac{n}{2}-1} \cdot c_t(m) \cdot t \cdot c_t(n-m) \right) + \left( \sum_t t \binom{c_t(\frac{n}{2})}{2} + \left\lceil \frac{t}{2} \right\rceil c_t\left(\frac{n}{2}\right) \right)
\]
\end{proposition}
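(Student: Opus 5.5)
We reuse the set-collection description from the proof of \cref{thm:clr-count-odd-length}. A colored 2-ball prime pattern of length $n$ is a cyclic interleaving
\[
S_1, R_{\pi(1)}, \dots, S_t, R_{\pi(t)}
\]
taken up to rotation. Here $\mathcal{S}$ and $\mathcal{R}$ are cyclic collections of pairwise disjoint sets, in the sense of \cref{lem:pattern-sets-bijection}. They encode normal prime patterns of lengths $m$ and $n-m$ with the same number $t$ of $C_2$ cards. We split the count according to $m$. The pairs with $m\neq n/2$ are handled exactly as in the odd case. The diagonal $m=n/2$ is where $\mathcal{S}$ and $\mathcal{R}$ can coincide or share an orbit, so it is treated separately.

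\textbf{Off-diagonal part.} Unordered splits $\{m,n-m\}$ with $m<n/2$ correspond to $1\le m\le \frac n2-1$. We label the shorter collection $\mathcal{S}$. The two collections have different total lengths, so no rotation of the combined cycle can exchange the roles of $\mathcal{S}$ and $\mathcal{R}$. Since the sets inside $\mathcal{S}$ are pairwise distinct, the $t$ cyclic shifts $\pi$ of $\mathcal{R}$ relative to $\mathcal{S}$ give $t$ distinct cyclic words. This is the argument of \cref{thm:clr-count-odd-length}, and it yields $\sum_t\sum_{m=1}^{n/2-1} t\,c_t(m)c_t(n-m)$.

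\textbf{Diagonal part, $m=n/2$.} Here $\mathcal{S},\mathcal{R}\in\mathcal{X}_{t,n/2}$, and the pair is unordered because a rotation by one step swaps the parity classes.

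\emph{Case $\mathcal{S}\neq\mathcal{R}$.} There are $\binom{c_t(n/2)}{2}$ unordered pairs. We must check that for fixed $\{\mathcal{S},\mathcal{R}\}$ the $t$ relative shifts give $t$ distinct patterns, and that no two shifts become identified through the odd rotation that swaps the two collections. A swap would force $\mathcal{S}=\mathcal{R}$ as cyclic collections, so this gives the term $t\binom{c_t(n/2)}{2}$.

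\emph{Case $\mathcal{S}=\mathcal{R}$ as cyclic collections.} We write the pattern as $S_1,S_{1+s},S_2,S_{2+s},\dots$ for a shift $s\in\mathbb{Z}/t$. Primeness requires only disjointness within each parity class, and this holds automatically. The odd rotation that moves $R$-positions to $S$-positions sends shift $s$ to shift $-s$. Indeed, starting at $S_{1+s}$ we read $S_{1+s},S_2,\dots$, that is, $S'_i=S_{i+s}$ and $R'_i=S_{i+1}=S'_{i+1-s}$, up to an index convention that needs checking. So the distinct patterns correspond to orbits of $s\mapsto -s$ on $\mathbb{Z}/t$ (or $s\mapsto 1-s$, depending on the convention). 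Because the sets of $\mathcal{S}$ are distinct, there are no further coincidences.

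The number of such orbits is $\lceil t/2\rceil$ under one convention and $\lfloor t/2\rfloor+1$ under the other. \textbf{This step is the main obstacle.} Pinning down the exact involution on shifts, including the degenerate $s$ values that may or may not be allowed (for example $t=1$ with shift $0$, which repeats the same colored state), determines whether the count matches $\lceil t/2\rceil$. I would settle it by checking small cases, $t=1,2$ with $n$ small, directly against enumeration. This case contributes $\lceil t/2\rceil c_t(n/2)$.

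Summing over $t$ the off-diagonal contribution, the distinct-pair diagonal contribution and the equal-pair diagonal contribution gives the formula of \cref{prop:clr-count-even-length}.
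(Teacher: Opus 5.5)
Your decomposition (off-diagonal $m<n/2$; diagonal with $\mathcal{S}\neq\mathcal{R}$; diagonal with $\mathcal{S}=\mathcal{R}$) is the same as the paper's, and the first two parts are fine. The gap is the third term. You never show that each $\mathcal{S}$ contributes $\lceil t/2\rceil$ patterns. Instead you leave $\lfloor t/2\rfloor+1$ open as an alternative ``depending on the convention,'' to be settled by small cases.

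The number of distinct cyclic words is intrinsic, not a matter of convention, and the alternative is wrong. For $t=2$ the words $(S_1,S_1,S_2,S_2)$ and $(S_1,S_2,S_2,S_1)$ are rotations of each other, so the count is $1$, not $2$.

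Your own computation already decides the question. Take $W_s=(S_1,S_{1+s},S_2,S_{2+s},\dots,S_t,S_{t+s})$ and start reading at the second entry. This gives $T_i=S_{i+s}$ and $U_i=S_{i+1}=T_{i+1-s}$, so a one-step rotation sends $W_s$ to $W_{1-s}$.
\begin{itemize}
\item Any affine reindexing of the shifts turns this into $s\mapsto e-s$ with $e$ an odd integer. So $s\mapsto -s$ never occurs.
\item The fixed-point equation $2s\equiv e \pmod t$ has no solution for even $t$ and exactly one for odd $t$.
\item An even rotation must preserve the alignment of the pairwise distinct $S_i$. Hence $W_s$ and $W_{s'}$ coincide if and only if $s'\in\{s,1-s\}$.
\end{itemize}
This gives exactly $t/2$ orbits for even $t$ and $(t+1)/2$ for odd $t$, i.e.\ $\lceil t/2\rceil$. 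It is the precise form of the paper's pairing claim: arrangements pair up, with one self-paired arrangement when $t$ is odd.

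Second, your worry about degenerate shifts is misplaced, and acting on it would break the count. For $t=1$ the word $(S_1,S_1)$ is a genuine prime pattern. The two copies lie in opposite parity classes, so they carry opposite colorings. Primeness only requires disjointness within a parity class, as you yourself note. No shift may be discarded: dropping $s=0$ at $t=1$ would give $0$ instead of $\lceil 1/2\rceil=1$. With these two points supplied, your argument proves the stated formula.
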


\begin{proof}
The argument of the previous theorem is still valid for every $m < \frac{n}{2}$, which leads to the first term of the formula.

For the case $m = \frac{n}{2}$, for each partition of size $t$, there are $\binom{c_t\left(\frac{n}2\right)}{2}$ ways to create combinations of $(\mathcal{S}, \mathcal{R})$ with $\mathcal{S} \neq \mathcal{R}$. For each pair, there are $t$ ways to arrange each combination into a colored 2-ball prime juggling pattern.

This gives
\[
    \sum_t t \cdot \binom{c_t\left(\frac{n}{2}\right)}{2}
\]

Furthermore, there are $c_t\left(\frac{n}2\right)$ ways to create combinations of $(\mathcal{S}, \mathcal{R})$ with $\mathcal{S} = \mathcal{R}$, and in this case there are $\lceil \frac{t}2 \rceil$ ways to arrange each combination into a colored 2-ball prime juggling pattern. The reason for is, for $t$ even, $\frac{t}{2}$ out of the $t$ arrangements are equivalent to each other. For $t$ odd, this happens $\frac{t-1}{2}$ of the times, leaving $\frac{t+1}{2}$ valid placements, as shown in \cref{fig:clr-equal-arrangement}.


This gives the term
\[
    \sum_t \left\lceil \frac{n}{2} \right\rceil \cdot c_t\left(\frac{n}{2}\right) 
\]

\begin{center}
\begin{tikzpicture}

    \ucycle{0}{0}{5}{1.7}{0}
    \unode{0}{0}{5}{1.7}{0}
    \dnode{0}{0}{5}{1.7}{0}
    \dnode{0}{0}{5}{1.7}{1}
    \dnode{0}{0}{5}{1.7}{2}
    \ndnode{0}{0}{5}{1.7}{3}
    \ndnode{0}{0}{5}{1.7}{4}
    
    \draw[->, black!30!white] (-0.75,0.9) to[out=-40, in=-140] (0.75,0.9);
    \draw[->, black!30!white] (-1.1,-0.4) to[out=30, in=150] (1.1,-0.4);

    \def\x{7}

    \ucycle{\x}{0}{6}{1.7}{0}
    \unode{\x}{0}{6}{1.7}{0}
    \dnode{\x}{0}{6}{1.7}{0}
    \dnode{\x}{0}{6}{1.7}{1}
    \dnode{\x}{0}{6}{1.7}{2}
    \ndnode{\x}{0}{6}{1.7}{3}
    \ndnode{\x}{0}{6}{1.7}{4}
    \ndnode{\x}{0}{6}{1.7}{5}

    \draw[->, black!30!white] (\x-0.65,1) to[out=-50, in=-130] (\x+0.65,1);
    \draw[->, black!30!white] (\x-1.1,0) -- (\x+1.1,0);
    \draw[->, black!30!white] (\x-0.65,-1) to[out=50, in=130] (\x+0.65,-1);



\end{tikzpicture}
\label{fig:clr-equal-arrangement}
\end{center}

\end{proof}

\subsection{Asymptotics}

We now determine the asymptotic behavior of $C'(2,n)$. We first establish bounds for the case when $n$ is even to show that the parity of $n$ does not affect the leading asymptotic term.

\begin{lemma} \label{lem:clr-n-even-sandwich}
    For any $n$ even
    \[
        \frac{1}{2} \sum_{m=1}^{n-1} \sum_{t} t \cdot c_t(m) \cdot c_t(n-m) \leq C(2, n) \leq \sum_{m=1}^{\frac{n}{2}} \sum_{t} t \cdot c_t(m) \cdot c_t(n-m)
    \]
    holds.
\end{lemma}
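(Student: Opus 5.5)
The plan is to start from the exact formula in \cref{prop:clr-count-even-length} and compare it term by term with the two sums in the statement. Here $C(2,n)$ is read as $C'(2,n)$, the colored prime count. Write $A(m) = \sum_t t\, c_t(m)\, c_t(n-m)$ and $T = \sum_{m=1}^{n-1} A(m)$. Note that $A(m) = A(n-m)$, so
\[
T = 2\sum_{m=1}^{\frac{n}{2}-1} A(m) + A\!\left(\tfrac{n}{2}\right).
\]
The proposition gives
\[
C'(2,n) = \sum_{m=1}^{\frac n2 -1} A(m) + E, \qquad E = \sum_t \left( t\binom{c_t(\frac n2)}{2} + \left\lceil \tfrac t2\right\rceil c_t\!\left(\tfrac n2\right)\right),
\]
where I use the intended $\lceil t/2\rceil$ in the last term rather than the typo $\lceil n/2\rceil$. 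So both inequalities reduce to bounding the middle-term contribution $E$ against $A(\frac n2)=\sum_t t\, c_t(\frac n2)^2$.

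For the upper bound I need $E \le A(\frac n2)$, and I will check it separately for each $t$. Writing $c=c_t(\frac n2)$, the $t$-th summand of $E$ is $t\,c(c-1)/2 + \lceil t/2\rceil c$. Since $\lceil t/2\rceil \le t$, this is at most $t\,c(c-1)/2 + t c = t\,c(c+1)/2 \le t c^2$, using that $c$ is a nonnegative integer: if $c=0$ both sides vanish, and if $c \ge 1$ then $(c+1)/2 \le c$. Summing over $t$ gives $C'(2,n) \le \sum_{m=1}^{n/2} A(m)$.

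For the lower bound I need $\frac12 T \le C'(2,n)$, which by the symmetry above is equivalent to $\frac12 A(\frac n2) \le E$. Again I compare per $t$: I need $\frac{t c^2}{2} \le \frac{t c(c-1)}{2} + \lceil t/2\rceil c$. This is equivalent to $\frac{tc}{2} \le \lceil t/2\rceil c$, which holds since $\lceil t/2\rceil \ge t/2$ and $c\ge 0$. Summing over $t$ and adding $\sum_{m<n/2} A(m) = \frac12(T - A(\frac n2))$ yields the claim.

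The main obstacle is not the algebra but making sure the input formula is the right one. The bound relies on the proposition's middle term being exactly $t\binom{c}{2} + \lceil t/2\rceil c$. It also relies on $c_t(\frac n2)$ being an integer, since it counts $\mathcal{X}_{t,n/2}$ by \cref{def:c_t(n)}; this is needed for the step $(c+1)/2\le c$. I would state both of these explicitly before the per-$t$ comparison, correcting the $\lceil n/2\rceil$ typo along the way.
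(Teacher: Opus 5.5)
Your proposal is correct and follows essentially the same route as the paper: start from the formula in \cref{prop:clr-count-even-length}, reduce both inequalities to the $m=\frac n2$ contribution, and then check $\frac12 t c^2 \le t\binom{c}{2} + \lceil t/2\rceil c \le t c^2$ for each $t$, with $c=c_t(\frac n2)$ a nonnegative integer. Two of your choices differ slightly from the paper. Your explicit use of the symmetry $A(m)=A(n-m)$ for the lower bound is cleaner than the paper's remark that the $m\neq\frac n2$ terms are ``identical in all three expressions'', and your upper-bound step via $\lceil t/2\rceil\le t$ is only a slightly cruder variant of the paper's observation that $\lceil t/2\rceil - \frac t2 \le \frac12 \le \frac t2 c$.
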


\begin{proof}
    For $n$ odd, the sum ranges from $1$ to $(n-1)/2$, meaning there is no middle term $m = n/2$. The inequality becomes an equality, so the statement holds trivially.

    For $n$ even, the summation terms for $m \neq n/2$ are identical in all three expressions. Thus, it suffices to verify the inequality for the specific term where $m = n/2$. We must show:
    \[
        \frac{1}{2} t \left(c_t\left(\frac{n}{2}\right)\right)^2 \leq t \binom{c_t\left(\frac{n}{2}\right)}{2} + \left\lceil \frac{t}{2} \right\rceil c_t\left(\frac{n}{2}\right) \leq t \left(c_t\left(\frac{n}{2}\right)\right)^2
    \]
    Expanding the binomial coefficient in the middle term:
    \begin{align*}
        t \binom{c_t\left(\frac{n}{2}\right)}{2} + \left\lceil \frac{t}{2} \right\rceil c_t\left(\frac{n}{2}\right) 
        &= t \frac{c_t\left(\frac{n}{2}\right)\left(c_t\left(\frac{n}{2}\right) - 1\right)}{2} + \left\lceil \frac{t}{2} \right\rceil c_t\left(\frac{n}{2}\right) \\
        &= \frac{1}{2}t \left(c_t\left(\frac{n}{2}\right)\right)^2 - \frac{1}{2}t c_t\left(\frac{n}{2}\right) + \left\lceil \frac{t}{2} \right\rceil c_t\left(\frac{n}{2}\right) \\
        &= \frac{1}{2}t \left(c_t\left(\frac{n}{2}\right)\right)^2 + c_t\left(\frac{n}{2}\right) \left( \left\lceil \frac{t}{2} \right\rceil - \frac{t}{2} \right)
    \end{align*}
    For the lower bound, since $c_t(n/2) \ge 0$ and $\lceil t/2 \rceil \ge t/2$, the term $c_t(n/2) (\lceil t/2 \rceil - t/2)$ is non-negative. Thus:
    \[
        \frac{1}{2}t \left(c_t\left(\frac{n}{2}\right)\right)^2 \leq \frac{1}{2}t \left(c_t\left(\frac{n}{2}\right)\right)^2 + c_t\left(\frac{n}{2}\right) \left( \left\lceil \frac{t}{2} \right\rceil - \frac{t}{2} \right)
    \]
    For the upper bound, we can assume $c_t(n/2) \geq 1$ and $t \ge 1$ (because the inequality is trivially true when $c_t(n/2)=0$). We subtract the expanded middle term from the upper bound target $t (c_t(n/2))^2$:
    \begin{align*}
        &t \left(c_t\left(\frac{n}{2}\right)\right)^2 - \left( \frac{1}{2}t \left(c_t\left(\frac{n}{2}\right)\right)^2 + c_t\left(\frac{n}{2}\right) \left( \left\lceil \frac{t}{2} \right\rceil - \frac{t}{2} \right) \right) \\
        &= \frac{1}{2}t \left(c_t\left(\frac{n}{2}\right)\right)^2 - c_t\left(\frac{n}{2}\right) \left( \left\lceil \frac{t}{2} \right\rceil - \frac{t}{2} \right) \\
        &= c_t\left(\frac{n}{2}\right) \left( \frac{t}{2} c_t\left(\frac{n}{2}\right) - \left( \left\lceil \frac{t}{2} \right\rceil - \frac{t}{2} \right) \right)
    \end{align*}
    and we aim to show this is non-negative.
    
    We know that $\lceil t/2 \rceil - t/2$ is either $0$ (if $t$ is even) or $\frac12$ (if $t$ is odd). 
    Since $t \geq 1$ and $c_t(n/2) \geq 1$, we have $\frac{t}{2} c_t(n/2) \geq \frac12$. 
    Therefore, $\frac{t}{2} c_t(n/2) \ge \lceil t/2 \rceil - t/2$, making the difference non-negative. Thus:
    \[
        t \binom{c_t\left(\frac{n}{2}\right)}{2} + \left\lceil \frac{t}{2} \right\rceil c_t\left(\frac{n}{2}\right) \leq t \left(c_t\left(\frac{n}{2}\right)\right)^2
    \]
    Summing over all $t$ completes the proof.
\end{proof}

Furthermore, the difference between the odd and even formulas asymptotically relative to $n2^n$. Consequently, the asymptotic behavior of the function is independent of parity.

We now apply the General Asymptotic Theorem to find the explicit growth rate.

\begin{theorem}\label{thm:clr-asymptotics}
The number of colored 2-ball prime juggling patterns of length $n$ satisfies
\[
C'(2,n) \sim \gamma_{C'} n 2^n
\]
where
\[
\gamma_{C'} = \frac{1}{2} \sum_{t=1}^{\infty} t q_t^2 = \frac{1}{8} \sum_{t=1}^{\infty} t \left( \prod_{i=2}^{t} \frac{i-1}{2^i - i - 1} \right)^2 \approx 0.478326.
\]
\end{theorem}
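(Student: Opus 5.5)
Our plan is to reduce to the convolution asymptotics of \cref{thm:general_asymptotics}, part 3, except that there the weight multiplies the product $c_t(m)c_t(n-m)$ rather than each factor separately. So we first prove a weighted ``diagonal'' version of part 3. Define
\[
G(n) = \sum_{m=1}^{n-1} \sum_t t\, c_t(m)\, c_t(n-m).
\]
By \cref{thm:clr-count-odd-length} (odd $n$) and \cref{lem:clr-n-even-sandwich} (even $n$), we have $C'(2,n) = \tfrac12 G(n)$ for odd $n$ by symmetry $m \leftrightarrow n-m$. For even $n$, $C'(2,n)$ lies between $\tfrac12 G(n)$ and $\tfrac12 G(n) + \tfrac12\sum_t t\, c_t(n/2)^2$. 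The extra middle term is at most $\sum_t t q_t^2 2^n = O(2^n)$ by \cref{lem:ct_bounds}, which is $o(n2^n)$. So it suffices to show $G(n) \sim \left(\sum_t t q_t^2\right) n 2^n$.

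For the upper bound we use $c_t(k) \le q_t 2^k$ termwise:
\[
G(n) \le \sum_{m=1}^{n-1}\sum_t t q_t^2 2^n = (n-1)\left(\sum_t t q_t^2\right) 2^n .
\]
This also shows that $\sum_t t q_t^2$ converges. Convergence follows directly anyway, since $q_t$ decays superexponentially: $q_t/q_{t-1} = (t-1)/(2^t-t-1)$.

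For the lower bound we mimic the truncation argument. Fix $\varepsilon>0$ and choose $K$ with $\sum_{t\le K} t q_t^2 > \sum_t t q_t^2 - \varepsilon$. Then drop all $t>K$, which is allowed by nonnegativity. For each $t \le K$, write $c_t(k) = q_t 2^k - E_t(k)$ with $0\le E_t(k)\le r_t\sqrt3^k$. Expanding the product gives the main term $(n-1) t q_t^2 2^n$. The cross terms are bounded by
\[
t q_t r_t \sum_m \left(2^m \sqrt3^{\,n-m} + 2^{n-m}\sqrt3^{\,m}\right) = O(2^n),
\]
since these are geometric sums dominated by their largest term $\approx 2^n$ times a constant. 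The $E E$ term is nonnegative, so it can be discarded for a lower bound, or bounded by $O(n\sqrt3^n)$. Since $K$ is fixed, the constants are finite. Dividing by $n2^n$ and letting $n\to\infty$ gives $\liminf G(n)/(n2^n) \ge \sum_t t q_t^2 - \varepsilon$.

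The main subtlety is bookkeeping rather than depth. First, the lower-order terms must be uniform only over the finitely many $t\le K$, which the truncation ensures. Second, we need the factor $\tfrac12$ to be correct in both parities: $\tfrac12 G(n)$ equals the odd-case sum exactly because the summand is symmetric in $m\leftrightarrow n-m$. The final step is simple arithmetic. Substituting $q_t = \tfrac12\prod_{i=2}^t \frac{i-1}{2^i-i-1}$ gives $\tfrac12 t q_t^2 = \tfrac18 t\left(\prod_{i=2}^t \frac{i-1}{2^i-i-1}\right)^2$. Summing a few terms, which converge very rapidly, confirms the numerical value $\approx 0.478326$.
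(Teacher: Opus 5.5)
Your proof is correct and takes essentially the paper's route. The paper applies part 3 of \cref{thm:general_asymptotics} to each $c_t$ separately, swaps the limit with the sum over $t$ using the bounds of \cref{lem:ct_bounds}, and reduces the even case to the odd one via \cref{lem:clr-n-even-sandwich}; your termwise upper bound plus truncation at $K$ is that same interchange, carried out explicitly rather than merely asserted. One small slip: the upper bound $G(n)\le (n-1)\bigl(\sum_t t q_t^2\bigr)2^n$ presupposes that $\sum_t t q_t^2$ converges rather than showing it, but your superexponential-decay remark supplies that convergence.
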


\begin{proof}
    Let $S(n) = \sum_{t=1}^\infty t (c_t * c_t)(n)$.
    We apply \cref{thm:general_asymptotics} (Part 3) to the single function $c_t(n)$ (equivalent to setting the weight $w(k) = \delta_{kt}$). The theorem states that the convolution of $c_t$ with itself behaves as:
    \[
        (c_t * c_t)(n) \sim q_t^2 n 2^n.
    \]
    The term corresponding to the midpoint $m=n/2$ in the convolution is $c_t(n/2)^2$, which by \cref{lem:ct_bounds} is bounded by $O(2^n)$. This is negligible compared to the total convolution sum which grows as $O(n 2^n)$. Consequently, the difference between the upper and lower bounds in \cref{lem:clr-n-even-sandwich} vanishes asymptotically relative to the main term, and $C'(2,n) \sim \frac{1}{2} S(n)$ for both even and odd $n$.
    
    Summing the asymptotic contributions over $t$:
    \[
        S(n) = \sum_{t=1}^\infty t (c_t * c_t)(n) \sim \sum_{t=1}^\infty t (q_t^2 n 2^n) = \left( \sum_{t=1}^\infty t q_t^2 \right) n 2^n.
    \]
    The exchange of the summation and the limit is justified by the uniform bounds on $c_t(n)$ provided in \cref{lem:ct_bounds}, which ensure the series converges.
    Thus,
    \[
        C'(2,n) \sim \frac{1}{2} \left( \sum_{t=1}^\infty t q_t^2 \right) n 2^n = \gamma_{C'} n 2^n.
    \]
\end{proof}

\section{Passing 1-Ball and 2-Ball} \label{sec:passing}

A passing pattern is a juggling pattern in which balls can be thrown and caught with $k$ distinct hands, so that at most $k$ balls can be caught and thrown at one time, and they can be thrown to any hand.

In general, a state in passing is a $k \times \infty$ matrix $A$, where the sum of all entries is $b$, the number of balls. If an entry $a_{i,j} = 1$, it means that a ball is scheduled to land in $j$ beats at ``hand'' number $i$. Similarly to normal juggling, ones move to the left once each beat, and can be thrown anywhere when they reach the leftmost column (meaning that a ball has landed at a hand).

This is an example of a valid transition for $b = 2$, $k = 3$:
\[
\left\langle
\begin{matrix} 0 & 0 & 1 \\
0 & 1 & 0 \end{matrix} \right\rangle \rightarrow \left\langle
\begin{matrix} 0 & 1 & 0 \\
1 & 0 & 0 \end{matrix} \right\rangle
\]

Patterns are prime if they never repeat a state. In regular juggling, if a set of spacings between the balls occurs more than once, the pattern will not be prime as they will both eventually reach the state $\langle 1, \dots \rangle$.

For example, in 2-ball juggling, having a spacing of $j$ between the balls will eventually reach the state $\langle 1, \underbrace{0, \dots, 1}_j \rangle$ in both cases. However, in passing, this difference can occur if balls are scheduled to land at different hands. For example:
\[
\left\langle
\begin{matrix}
0 & 1 & 0 & 0 \\
0 & 0 & 0 & 1 \\
0 & 0 & 0 & 0
\end{matrix}
\right\rangle
\neq
\left\langle
\begin{matrix}
0 & 0 & 0 & 0 \\
0 & 1 & 0 & 0 \\
0 & 0 & 0 & 1
\end{matrix}
\right\rangle
\]
but both states have a ball landing in 2 beat and another ball landing in 4 beats.

In general, we denote by $P(b,n,k)$ the number of $b$-ball passing patterns with $k$ hands and length $n$, and we use $P(b,n,k)$ to denote the number of such prime patterns. Furthermore, we use $\mathcal{P}$ and $\mathcal{P'}$ to refer to the set of all such patterns, not the count of them: this is, we can talk about a pattern $\rho \in \mathcal{P}$.

\subsection{Passing 1-Ball} \label{sec:passing-1-ball}

In this section we find an exact count for $P'(1, n, k)$.

\begin{theorem} \label{thm:passing-1-ball}
The number of 1-ball prime passing patterns of length $n$ and $k$ hands is
\[
P'(1, n, k) = \sum_{h=1}^{k} \binom{k}{h} \binom{n-1}{h-1} (h - 1)!
\]
\end{theorem}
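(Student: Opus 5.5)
The plan is to describe every 1-ball prime passing pattern completely by a cyclic sequence of distinct hands together with throw heights, and then count these objects directly. With one ball, a state is a $k\times\infty$ matrix with a single $1$, so we identify it with a pair $(i,j)$: hand $i$, landing in $j$ beats. The transition rule gives only two kinds of move. If $j>1$, the only move is $(i,j)\to(i,j-1)$. If $j=1$, the ball lands in hand $i$ and may be thrown anywhere, so $(i,1)\to(i',d)$ for any hand $i'$ and any $d\ge 1$.

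First we decompose an arbitrary closed walk of length $n$. Cut it at the beats where the ball lands, i.e.\ the states of the form $(\cdot,1)$; there must be at least one, since otherwise the second coordinate would strictly decrease forever. Each piece has the form $(h,d)\to(h,d-1)\to\cdots\to(h,1)$, followed by a throw to the next piece. So the walk is encoded by a cyclic sequence $(h_1,d_1),\dots,(h_m,d_m)$ with $d_s\ge1$ and $\sum_s d_s=n$. Conversely, every such cyclic sequence gives a closed walk of length $n$.

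Next we characterize primeness. If some hand occurs twice among $h_1,\dots,h_m$, then the state $(h,1)$ is visited twice, so the walk is not prime. Conversely, if the $h_s$ are pairwise distinct, the states visited in different pieces lie in different rows, and within one piece the second coordinates are distinct. Hence all $n$ states are distinct and the walk is prime. So prime patterns correspond bijectively to cyclic arrangements of $h$ \emph{distinct} hands, each carrying a positive height, with the heights summing to $n$, where $1\le h\le k$.

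Finally we count, taking patterns as closed walks up to rotation, as in the rest of the paper. Because the hands are distinct, no rotation fixes an arrangement, so we can count without any Burnside-type correction. There are $\binom{k}{h}$ ways to choose the set of hands and $(h-1)!$ cyclic orders on that set. The heights are a function from the chosen hands to positive integers summing to $n$, which gives $\binom{n-1}{h-1}$ compositions. Summing over $h$ yields the stated formula.

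The main obstacle is making the rotation bookkeeping rigorous: one must check that attaching heights to hands, rather than to positions in a linear word, is exactly what makes the cyclic-order count $(h-1)!$ combine cleanly with the composition count. This works precisely because the distinctness of the hands kills all rotational symmetry.
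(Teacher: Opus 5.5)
Your proof is correct and takes the same route as the paper's. Both choose the $h$ active hands, use that primeness forbids throwing to any hand twice, count the heights by stars and bars as $\binom{n-1}{h-1}$, and multiply by $(h-1)!$ cyclic orders. Your decomposition into landing-to-landing pieces, the two-way primeness criterion, and attaching heights to hands (so distinct hands remove all rotational symmetry) make rigorous the steps the paper only sketches.
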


\begin{proof}

For passing patterns with a single ball, $k$ possible hands, and $n$ states, we can choose $h$ of the $k$ hands to throw to within the pattern. Notice that we may not throw to any given hand more than once in a one-ball pattern, because that would repeat the state of the ball landing in that hand.

Therefore, for each of $\binom{k}{h}$ choices, for each hand in $h$, the ball may be thrown to one height above that hand. As the heights of the throws must sum to the cycle length $n$ when we have only one ball, we get the heights by a partition of $n$ into $h$ parts. There are therefore (by applying Stars and Bars) $\binom{k}{h} \binom{n-1}{h-1}$ possible heights for any $1 \leq h \leq k$. 

Given that the pattern depends on the order of the heights up to rotation, of which there are $(h-1)!$ possibilities. Summing over all possible numbers of active hands $h$ from $1$ to $k$ gives the total count.

\end{proof}

\subsection{Asymptotic Behavior of \texorpdfstring{$P'(1, n, k)$}{P'(1, n, k)}}

In this section, we analyze the behavior of the function $P'(1, n, k)$. the formula derived in \cref{sec:passing-1-ball} is a polynomial in $k$, and for a fixed $k$, it is a polynomial in $n$.

\subsubsection{Case 1: Fixed \texorpdfstring{$n$}{n}, \texorpdfstring{$k \to \infty$}{k -> infinity}}

When $n$ is fixed, the number of hands $k$ can grow indefinitely. The sum is limited by $h \le n$, so:
\[
P'(1, n, k) = \sum_{h=1}^{n} \binom{k}{h} \binom{n-1}{h-1} (h - 1)!
\]

\begin{proposition}
For a fixed integer $n \ge 1$, as $k \to \infty$, the number of prime 1-ball, $k$-hand passing patterns of length $n$ behaves as:
\[
P'(1, n, k) \sim \frac{k^n}{n}
\]
\end{proposition}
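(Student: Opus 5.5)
Since $n$ is fixed, the sum in \cref{thm:passing-1-ball} has at most $n$ nonzero terms: for $h > n$ the factor $\binom{n-1}{h-1}$ vanishes. This is why we may write
\[
P'(1,n,k)=\sum_{h=1}^{n}\binom{k}{h}\binom{n-1}{h-1}(h-1)!
\]
once $k\ge n$. This is a finite sum of polynomials in $k$, and the $h$-th term has degree exactly $h$ in $k$. So $P'(1,n,k)$ is a polynomial in $k$ of degree $n$, and its asymptotic behaviour as $k\to\infty$ is governed by the leading coefficient alone, which comes entirely from the $h=n$ term.

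The key steps are as follows. First, isolate the $h=n$ term:
\[
\binom{k}{n}\binom{n-1}{n-1}(n-1)! = \frac{k(k-1)\cdots(k-n+1)}{n!}(n-1)! = \frac{k(k-1)\cdots(k-n+1)}{n}.
\]
Second, note that $k(k-1)\cdots(k-n+1)=k^n\prod_{j=0}^{n-1}(1-j/k)$, and the product tends to $1$ as $k\to\infty$ with $n$ fixed, so this term is asymptotic to $k^n/n$.

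Third, bound the remaining terms. For each $1\le h\le n-1$ we have
\[
\binom{k}{h}\binom{n-1}{h-1}(h-1)! \le k^h\binom{n-1}{h-1}(h-1)!,
\]
and $\binom{n-1}{h-1}(h-1)!$ is a constant $C_{n,h}$ depending only on $n$ and $h$. Hence the sum of the terms with $h<n$ is at most $\left(\sum_{h=1}^{n-1}C_{n,h}\right)k^{n-1}=O(k^{n-1})$, which is $o(k^n)$. All terms are nonnegative, so
\[
\frac{k^n}{n}(1-o(1)) \le P'(1,n,k) \le \frac{k^n}{n}(1+o(1)) + O(k^{n-1}),
\]
and dividing by $k^n/n$ gives $P'(1,n,k)\sim k^n/n$.

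There is no real obstacle here. The only points needing care are justifying the truncation of the sum at $h=n$ (it is exact because the binomial coefficient vanishes, with no approximation involved) and making sure the lower-order bound uses constants independent of $k$, which holds since $n$ is fixed. Interpreted combinatorially, the dominant contribution counts the patterns in which the ball visits $n$ distinct hands, each with throw height $1$, arranged cyclically: there are $k^{\underline{n}}/n$ of them.
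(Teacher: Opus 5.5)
Your proof is correct and follows the paper's argument: you isolate the $h=n$ term $\frac{k(k-1)\cdots(k-n+1)}{n}$, whose leading behaviour is $k^n/n$, and note that the remaining terms with $h<n$ contribute only $O(k^{n-1})$. Your explicit estimates, $\binom{k}{h}\le k^h$ and $\prod_{j=0}^{n-1}(1-j/k)\to 1$, simply make the paper's degree-counting step more concrete.
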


\begin{proof}
Each term in the finite sum is a polynomial in $k$. The term $\binom{k}{h} = \frac{k(k-1)\cdots(k-h+1)}{h!}$ is a polynomial in $k$ of degree $h$. Therefore, the entire sum $P'(1, n, k)$ is a polynomial in $k$. The degree of this polynomial is determined by the largest value of $h$ in the sum, which is $h=n$.

We can analyze the term for $h=n$:
\begin{align*}
    \text{Term}_{h=n} &= \binom{k}{n} \binom{n-1}{n-1} (n - 1)! \\
    &= \binom{k}{n} \cdot 1 \cdot (n-1)! \\
    &= \frac{k(k-1)\cdots(k-n+1)}{n!} (n-1)! \\
    &= \frac{k(k-1)\cdots(k-n+1)}{n}
\end{align*}
This is a polynomial in $k$ of degree $n$. The leading term is $\frac{k^n}{n}$.

Now consider any other term for $h < n$. The degree of this term as a polynomial in $k$ is $h$. Since all other terms have a degree strictly less than $n$, the asymptotic behavior of $P'(1, n, k)$ for large $k$ is dominated by the $h=n$ term.

Now we can write the sum as:
\[
P'(1, n, k) = \frac{k(k-1)\cdots(k-n+1)}{n} + \sum_{h=1}^{n-1} \binom{k}{h} \binom{n-1}{h-1} (h - 1)!
\]
The first term is a polynomial in $k$ of degree $n$ with leading term $\frac{k^n}{n}$. The summation is a polynomial in $k$ of degree at most $n-1$. Therefore,
\begin{align*}
    P'(1, n, k) &= \left(\frac{1}{n}k^n + O(k^{n-1})\right) + O(k^{n-1}) \\
    &= \frac{1}{n}k^n + O(k^{n-1})
\end{align*}
as desired.
\end{proof}

\subsubsection{Case 2: Fixed \texorpdfstring{$k$}{k}, \texorpdfstring{$n \to \infty$}{n -> infinity}}

When $k$ is fixed, the number of available hands is constant, but the length of the pattern $n$ can grow. The sum is over a fixed number of terms, from $h=1$ to $h=k$, so we have:
\[
P'(1, n, k) = \sum_{h=1}^{k} \binom{k}{h} (h - 1)! \binom{n-1}{h-1}
\]

\begin{proposition}
For a fixed integer $k \ge 1$, as $n \to \infty$, the number of prime 1-ball, $k$-hand passing patterns of length $n$ behaves as:
\[
P'(1, n, k) \sim n^{k-1}
\]
\end{proposition}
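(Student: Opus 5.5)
Since $k$ is fixed, the sum in \cref{thm:passing-1-ball} has only $k$ terms, and each is a polynomial in $n$. I will mirror the proof of the previous proposition, with the roles of $n$ and $k$ swapped: identify the term of highest degree in $n$ and show that it dominates.

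For fixed $h$, the coefficient $\binom{k}{h}(h-1)!$ is a constant independent of $n$. The factor $\binom{n-1}{h-1} = \frac{(n-1)(n-2)\cdots(n-h+1)}{(h-1)!}$ is a polynomial in $n$ of degree $h-1$ with leading coefficient $\frac{1}{(h-1)!}$. The $h$-th term is therefore
\[
\binom{k}{h}(h-1)!\binom{n-1}{h-1} = \binom{k}{h}(n-1)(n-2)\cdots(n-h+1),
\]
a polynomial of degree $h-1$ with leading coefficient $\binom{k}{h}$.

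The maximal degree comes from $h=k$. That term equals $\binom{k}{k}(n-1)(n-2)\cdots(n-k+1) = n^{k-1} + O(n^{k-2})$. Every term with $h<k$ has degree at most $k-2$, and there are finitely many of them (at most $k-1$, with coefficients depending only on $k$). Their sum is therefore $O(n^{k-2})$. Hence $P'(1,n,k) = n^{k-1} + O(n^{k-2})$, and dividing by $n^{k-1}$ gives a ratio tending to $1$. This is the claimed $P'(1,n,k)\sim n^{k-1}$.

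The only point needing care is the degenerate case $k=1$. There the sum has the single term $h=1$, equal to $\binom{1}{1}\binom{n-1}{0}0! = 1 = n^{0}$, so the claim holds exactly. I see no real obstacle. The one thing to double-check is the leading coefficient: the $(h-1)!$ exactly cancels the denominator of the binomial coefficient, and $\binom{k}{k}=1$, so the constant is $1$ rather than something like $1/k$.
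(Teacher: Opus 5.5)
Your proposal is correct and follows the paper's proof essentially step for step: you expand each term $\binom{k}{h}(h-1)!\binom{n-1}{h-1}$ as a polynomial of degree $h-1$ in $n$, isolate the $h=k$ term $(n-1)(n-2)\cdots(n-k+1)=n^{k-1}+O(n^{k-2})$, and absorb the finitely many remaining terms into $O(n^{k-2})$, exactly as the paper does. Your explicit check of the degenerate case $k=1$ is a small addition the paper omits; otherwise the two arguments coincide.
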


\begin{proof}
Each term in the sum is a polynomial in $n$. The term $\binom{n-1}{h-1} = \frac{(n-1)(n-2)\cdots(n-h+1)}{(h-1)!}$ is a polynomial in $n$ of degree $h-1$. The degree of the entire sum is determined by the largest value of $h-1$, which occurs at $h=k$. Thus, $P'(1, n, k)$ is a polynomial in $n$ of degree $k-1$.

Let's analyze the term for $h=k$:
\begin{align*}
    \text{Term}_{h=k} &= \binom{k}{k} (k-1)! \binom{n-1}{k-1} \\
    &= 1 \cdot (k-1)! \cdot \frac{(n-1)(n-2)\cdots(n-k+1)}{(k-1)!} \\
    &= (n-1)(n-2)\cdots(n-k+1)
\end{align*}
This is a polynomial in $n$ of degree $k-1$. The leading term is $n^{k-1}$.

Now consider any other term for $h < k$. The degree of this term as a polynomial in $n$ is $h-1$. Since all other terms have a degree strictly less than $k-1$, the asymptotic behavior of $P'(1, n, k)$ for large $n$ is dominated by the $h=k$ term.

Now we can write the sum as:
\[
P'(1, n, k) = (n-1)(n-2)\cdots(n-k+1) + \sum_{h=1}^{k-1} \binom{k}{h} (h - 1)! \binom{n-1}{h-1}
\]
The first term is a polynomial in $n$ of degree $k-1$ with a leading coefficient of 1.
\begin{align*}
(n-1)(n-2)\cdots(n-k+1) &= n^{k-1} - (1+2+\dots+(k-1))n^{k-2} + O(n^{k-3}) \\
&= n^{k-1} - \frac{k(k-1)}{2}n^{k-2} + O(n^{k-3})
\end{align*}
The summation is a polynomial in $n$ of degree at most $k-2$, since the highest degree term in the sum comes from $h=k-1$, which is of degree $(k-1)-1 = k-2$.
Therefore,
\begin{align*}
    P'(1, n, k) &= \left(n^{k-1} + O(n^{k-2})\right) + O(n^{k-2}) \\
    &= n^{k-1} + O(n^{k-2})
\end{align*}
as desired.
\end{proof}

\subsection{Passing 2-Ball} \label{sec:passing-2-ball}

In this section, we find a lower bound for $P'(2, n, k)$. Notice that we can embed any prime multiplex pattern $\rho \in \mathcal{M}'(2, n, k)$ as a passing pattern. The choices of the hands doesn't affect primality, so we obtain the following:

\begin{theorem} \label{thm:passing-2-lower-bound}
    The number of passing 2-ball prime patterns using $k$ hands can be bounded by
    \[
    P'(2, n, k) \geq \sum_{\rho \in \mathcal{N}(2,n)} k^{\varphi(\rho)} + \sum_{\rho \in \mathcal{M}'(2,n)} k^{\varphi(\rho)} \cdot (k-1)    
    \]
    where $\varphi(\rho)$ gives the number of throw cards ($C_1$, $C_2$, $D_a$ or $D_b$) in a pattern.
\end{theorem}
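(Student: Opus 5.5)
The plan is to exhibit, for each underlying pattern, an explicit family of distinct prime passing patterns, and to show that the families are pairwise disjoint. For the first sum I read $\mathcal{N}(2,n)$ as the normal prime patterns $\mathcal{N}'(2,n)$. For the second sum I restrict to the strict multiplex patterns $\mathcal{M}'\setminus\mathcal{N}'(2,n)$ of \cref{def:mpx-strict}, since the normal ones are already counted. The main tool is the \emph{projection} of a $k\times\infty$ passing state $A=(\alpha_{i,j})$ to the single-row vector $\langle \sum_i \alpha_{i,1}, \sum_i \alpha_{i,2},\dots\rangle$. This projection sends valid passing transitions to valid (multiplex) transitions. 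If the projected closed walk visits $n$ distinct states, the lifted walk does too. So any lift of a prime pattern is prime, and lifts of different underlying patterns are different.

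\textbf{Normal patterns.} Fix $\rho\in\mathcal{N}'(2,n)$, written as a cyclic card sequence. To each throw card ($C_1$ or $C_2$) assign a destination hand in $\{1,\dots,k\}$, giving $k^{\varphi(\rho)}$ assignments. I then define the lifted walk beat by beat: a ball thrown at beat $s$ to height $h$ sits in row equal to its assigned hand until it lands at beat $s+h$. Because $\rho$ is periodic, every ball present at a given beat was thrown at some earlier beat of the cycle. Hence the matrix state at each beat is determined by the cyclic assignment, and the walk closes after $n$ beats. No two balls land in the same column, so entries stay in $\{0,1\}$. Different assignments give different patterns, even up to rotation. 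The reason is that a prime $\rho$ has $n$ distinct states, so it has no nontrivial rotational symmetry. A rotation mapping one lift to another would therefore have to be trivial on the projection, which forces the two assignments to be equal.

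\textbf{Strict multiplex patterns.} By \cref{lem:mpx-2-state}, such a $\rho$ (other than the single $D_c$ pattern) has the cyclic form: a normal stretch of $C$ cards, then $D_a, D_0,\dots,D_0, D_b$. In a passing state, two balls in the same column must lie in \emph{different} rows. So the $D_a$ throw, which makes the thrown ball land simultaneously with the other ball, must go to one of the $k-1$ hands not used by the companion ball. Every $C_1$ or $C_2$ card gets $k$ choices as before. The $D_b$ card throws both balls to distinct heights $i<j$, so it gets $k^2$ choices. This gives $k^{\#C}\cdot(k-1)\cdot k^2 = k^{\varphi(\rho)}(k-1)$ lifts, since $\varphi(\rho)=\#C+2$. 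The $D_c$ pattern is handled directly: fix one ball in hand $1$ and put the other in any of the $k-1$ other hands, giving at least $k-1$ lifts. Validity, closure and primality follow from the projection argument above. Distinctness modulo rotation again follows from the absence of rotational symmetry in prime projections.

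\textbf{Main obstacle.} The delicate step is the cyclic consistency of the hand assignment around the multiplex block. I must check that the hand of the companion ball at the $D_a$ beat is well-defined: it was fixed by an earlier throw in the cycle, possibly by the $D_b$ card itself. Then the ``$k-1$ choices'' really are $k-1$ choices for every choice of the other throws. I would resolve this by processing the cycle starting right after $\langle 2\rangle$, so that the $D_b$ throw is assigned first. The companion's row is then known before the $D_a$ choice is made. Summing the two disjoint families, and noting that $k\ge 1$ makes $k^{\varphi(\rho)}(k-1)\le$ the actual number of lifts, yields the stated inequality.
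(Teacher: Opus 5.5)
Your proposal is correct and follows the paper's own argument: you lift each normal prime pattern by assigning a hand to every throw ($k$ choices each), and each strict multiplex pattern with $k-1$ choices at the $D_a$ throw and $k^2$ at the $D_b$ throw. Your reading of the sums as ranging over $\mathcal{N}'(2,n)$ and the strict multiplex patterns is how the paper uses the bound afterwards. Your projection argument, the no-rotational-symmetry argument, and your separate handling of the $D_c$ pattern fill in details the paper leaves implicit.
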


\begin{proof}
For each $\rho \in \mathcal{N}(2,n)$, we can construct different passing patterns by choosing a hand for each throw that we make. Regardless of these choices, the patterns will remain prime as all states will be different. Furthermore, as there are $\varphi(\rho)$ throw cards, we can throw any ball to any of the $k$ hands in any throw. This argument gives the first sum of the lower bound.

For each $\rho \in \mathcal{M}(2,n)$, we can do the same thing for $C_1$ and $C_2$ cards. Then, at the moment of a $D_a$ throw, only $k-1$ hands can be chosen, as one is taken by the higher ball at that beat. From there, the state $\sigma_{(i,i})$ will be reached, followed by transitions up to the state $\langle 2 \rangle$ where a $D_b$ throw will be made. As both of the balls will go to different heights, there are $k^2$ possibilities for this throw. This gives the second term of the bound.
\end{proof}

But what is the behavior of the function $\varphi(\rho)$? At least, we know that $\varphi(\rho) \geq t$ where $t$ is the partition size of $\rho$. Therefore,
\[
    P'(2, n, k) \geq 
    \sum_{m < n} \sum_t (t c_t(m) + 1) \cdot k^t \cdot (k-1)
    + \sum_t c_t(n) \cdot k^{t}
\]

\subsection{Asymptotic Behavior of the Lower Bound for \texorpdfstring{$P'(1, n, k)$}{P'(2, n, k)}}

In this section, we analyze the asymptotic behavior of our lower bound  $P'(2, n, k)$, the number of 2-ball, $k$-hand prime passing patterns of length $n$, which begins with the lower bound established in \cref{thm:passing-2-lower-bound}.

We will analyze the behavior of this lower bound with fixed period $n$ as the number of hands $k$ grows, and then with a fixed number of hands $k$ as the period $n$ grows. 

\subsubsection{Case 1: Fixed \texorpdfstring{$n$}{n}, \texorpdfstring{$k \to \infty$}{k -> infinity}}

When the period $n$ is fixed and the number of hands $k$ tends to infinity, the behavior of $P'(2, n, k)$ is polynomial in $k$.

\begin{proposition}
For a fixed integer $n \ge 1$, as $k \to \infty$, the number of prime 2-ball, $k$-hand passing patterns of length $n$ is bounded below by:
\[
P'(2, n, k) \gtrsim k^{n+1}
\]
\end{proposition}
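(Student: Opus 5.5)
The plan is not to use the simplified bound through $c_t$ stated after \cref{thm:passing-2-lower-bound}. That bound only involves powers $k^t$ with $t$ the number of parts of a partition of $m\le n$ into distinct parts. Such $t$ is of order $\sqrt{2n}$, which is far too small. Instead I will go back to \cref{thm:passing-2-lower-bound} itself, where every summand is nonnegative. It then suffices to exhibit one pattern $\rho\in\mathcal{M}'(2,n)$ with $\varphi(\rho)=n$, i.e.\ a prime multiplex pattern in which \emph{every} beat is a throw card. That single summand contributes $k^n(k-1)$, and for fixed $n$ we have $k^n(k-1)\sim k^{n+1}$ as $k\to\infty$. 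Dropping all the other nonnegative summands gives $P'(2,n,k)\ge k^n(k-1)$, hence $P'(2,n,k)\gtrsim k^{n+1}$.

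To find the pattern, I classify which transitions are throws and keep a ball at position $1$ in every state. From $\sigma_{(1,j)}$ with $j>2$, the only throws that again land a ball on the next beat are the $C_1$-type throw to height $1$, giving $\sigma_{(1,j)}\to\sigma_{(1,j-1)}$, and throws leading to states $\sigma_{(j-1,h)}$ with $j-1\geq 2$; the latter do not land on the next beat. From $\sigma_{(1,2)}$ the $D_a$ card with $i=2$ gives $\sigma_{(1,2)}\to\sigma_{(1,1)}=\langle 2\rangle$. From $\langle 2\rangle$ the $D_b$ card gives $\langle 2\rangle\to\sigma_{(1,n)}$. This suggests the cycle
\[
\langle 2\rangle \to \sigma_{(1,n)} \to \sigma_{(1,n-1)} \to \cdots \to \sigma_{(1,2)} \to \langle 2\rangle .
\]

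Then I verify the required properties. The cycle has exactly $1+(n-1)=n$ states, and they are pairwise distinct, so the pattern is prime. It contains $\langle 2\rangle$, so it is strict multiplex, consistent with \cref{lem:mpx-2-state}. Every transition is a throw card: one $D_b$, then $n-2$ cards of type $C_1$, then one $D_a$. Hence $\varphi(\rho)=n$ and $\rho\in\mathcal{M}'(2,n)$, as needed.

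The main obstacle is the boundary case $n=1$. There the only multiplex pattern is $\langle 2\rangle\to\langle 2\rangle$, which is a $D_c$ card, and $D_c$ is not counted in $\varphi$. In that case I would argue directly about hand choices for this single throw, or restrict the statement to $n\ge 2$. A smaller point is to check that the proof of \cref{thm:passing-2-lower-bound} really assigns the factor $k^{\varphi(\rho)}(k-1)$ to a pattern containing both a $D_a$ and a $D_b$, so that no overcounting is hidden in the one summand I use. Since only a lower bound is claimed, I would use that summand exactly as \cref{thm:passing-2-lower-bound} states it.
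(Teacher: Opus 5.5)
Your proposal is correct and is essentially the paper's proof. The paper exhibits the same all-throw-card pattern $\langle 2\rangle\to\sigma_{(1,n)}\to\cdots\to\sigma_{(1,2)}\to\langle 2\rangle$ in $\mathcal{M}'(2,n)$ and reads off the summand $k^{n}(k-1)$ from \cref{thm:passing-2-lower-bound}; it phrases this via the maximal degree of the polynomial lower bound, whereas you simply discard the other nonnegative summands, which is cleaner. Your flag on $n=1$ is well founded, and the paper's proof silently skips that case: there $P'(2,1,k)=k+\binom{k}{2}\sim k^2/2$, so the bound holds only up to the constant $1/2$, and restricting to $n\ge 2$ is the right fix.
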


\begin{proof}

Since $n$ is fixed, the sets of prime patterns $\mathcal{N}(2,n)$ and $\mathcal{M}'(2,n)$ are finite. The expression on the right-hand side is a finite sum of polynomials in $k$. Specifically, for each pattern $\rho$, the term is proportional to $k^{\varphi(\rho)}$ or $k^{\varphi(\rho)+1}$.
Thus, $P'(2, n, k)$ is bounded below by a polynomial in $k$.
Let $d_n = \max \{ \varphi(\rho) + \delta_{\rho} \mid \rho \in \mathcal{N}'(2,n) \cup \mathcal{M}'(2,n) \}$, where $\delta_{\rho} = 1$ if $\rho$ is a strict multiplex pattern and $0$ otherwise.

Now consider the pattern $\rho^*$:
\[
\langle 2 \rangle \to \langle 1, \underbrace{0, \dots, 0}_{n-2}, 1\rangle \to \langle 1, \underbrace{0, \dots, 0}_{n-3}, 1\rangle  \to \dots \langle 1,1 \rangle \to \langle 2 \rangle
\]
This pattern is in $\mathcal{M}'(2, n)$ and all of its cards are throw cards. Hence $d_n = n$ and the term $k^d \cdot (k-1)$ will appear as part of the right sum in \cref{thm:passing-2-lower-bound}

\end{proof}

\subsubsection{Case 2: Fixed \texorpdfstring{$k$}{k}, \texorpdfstring{$n \to \infty$}{n -> infinity}}

For a fixed number of hands $k$ as the period $n$ grows, we utilize the General Asymptotic Theorem (\cref{thm:general_asymptotics}) to evaluate the sums appearing in the bound.

\begin{proposition}
For a fixed integer $k \ge 1$, as $n \to \infty$, the number of prime 2-ball, $k$-hand passing patterns of length $n$ is bounded below by:
\[
P'(2, n, k) \gtrsim \gamma_{P'}(k) 2^n
\]
where
\[
\gamma_{P'}(k) = \sum_{t=1}^\infty q_t k^t \big(1 + kt - t\big)
\]
where $q_t$ is defined as in \cref{def:q_t}
\end{proposition}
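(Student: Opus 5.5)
The plan is to start from the explicit lower bound displayed just after \cref{thm:passing-2-lower-bound},
\[
P'(2,n,k) \ge \sum_{m<n}\sum_t \bigl(t\,c_t(m)+1\bigr) k^t (k-1) + \sum_t c_t(n)\,k^t ,
\]
and to read each piece as one of the functions $F_w$ or $S_w$ of \cref{thm:general_asymptotics} for a suitable weight. Since the bound is only a lower bound, I will discard whatever is inconvenient (for instance the ``$+1$'' terms) whenever that only lowers the right-hand side.

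\textbf{Step 1: split into two weighted sums.} I will write the bound as $F_v(n) + (k-1)S_w(n)$ plus a nonnegative remainder, with weights $v(t)=k^t$ and $w(t)=t k^t$. Here
\[
F_v(n)=\sum_t k^t c_t(n), \qquad S_w(n)=\sum_{m=1}^{n-1}\sum_t t k^t c_t(m).
\]
The remainder is $\sum_{m<n}\sum_t k^t(k-1)$. The sum over $t$ there has to be understood as ranging only over $t$ with $c_t(m)\neq 0$, i.e.\ $t\le\sqrt{2m}$. It is nonnegative, so I drop it.

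\textbf{Step 2: apply the asymptotic theorem.} Part 1 of \cref{thm:general_asymptotics} gives $F_v(n)=(\gamma_v-o(1))2^n$ with $\gamma_v=\sum_t q_t k^t$. Part 2 gives $S_w(n)=(\gamma_w-o(1))2^n$ with $\gamma_w=\sum_t t q_t k^t$. Adding the two,
\[
P'(2,n,k)\ge \Bigl(\sum_t q_t k^t\bigl(1+(k-1)t\bigr)-o(1)\Bigr)2^n = (\gamma_{P'}(k)-o(1))2^n,
\]
which is exactly the claimed constant, since $1+kt-t=1+(k-1)t$.

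\textbf{Main obstacle: convergence.} The theorem requires $\gamma_v$ and $\gamma_w$ to converge for fixed $k$, because the $k^t$ weights grow exponentially. From \cref{def:q_t}, $q_t/q_{t-1}=(t-1)/(2^t-t-1)$, which behaves like $t2^{-t}$. Hence
\[
\frac{t q_t k^t}{(t-1) q_{t-1} k^{t-1}} \sim \frac{k t}{2^t} \to 0,
\]
and the ratio test shows both series converge for every fixed $k$. In fact $q_t$ decays roughly like $2^{-t^2/2}$, which dominates $k^t$.

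A secondary point is that the truncation argument in \cref{thm:general_asymptotics} uses $w\ge 0$. This holds because $k\ge1$, so $k-1\ge0$; the case $k=1$ degenerates harmlessly to the normal count $N'(2,n)$. Finally, since the proposition is stated with $\gtrsim$, the conclusion $\liminf_{n\to\infty} P'(2,n,k)/2^n\ge\gamma_{P'}(k)$ is exactly what the argument delivers.
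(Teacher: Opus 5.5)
Your proposal is correct and follows essentially the same route as the paper. Both arguments start from the weakened bound $\sum_t k^t c_t(n) + (k-1)\sum_{m<n}\sum_t t k^t c_t(m)$ plus nonnegative leftovers, then apply parts 1 and 2 of the General Asymptotic Theorem with weights $k^t$ and $tk^t$, after checking that $\sum_t t q_t k^t$ converges. Your ratio-test check of convergence and your explicit discarding of the stray ``$+1$'' terms are slightly more careful than the paper's version.
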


\begin{proof}
Using the inequality $\varphi(\rho) \geq t$, where $t$ is the number of sets in the partition associated with $\rho$, we can weaken the lower bound to a form suitable for our asymptotic tools:
\begin{align*}
    P'(2, n, k) &\geq \sum_{\rho \in \mathcal{N}(2,n)} k^{t} + (k-1) \sum_{\rho \in \mathcal{M}'(2,n)} k^{t} \\
    &= \sum_{t=1}^\infty k^t c_t(n) + (k-1) \left( \sum_{m=1}^{n-1} \sum_{t=1}^\infty t k^t c_t(m) + 1 \right)
\end{align*}
We define two weight functions: $w_1(t) = k^t$ and $w_2(t) = t k^t$.
The associated series of constants are:
\[
\gamma_{w_1} = \sum_{t=1}^\infty k^t q_t \quad \text{and} \quad \gamma_{w_2} = \sum_{t=1}^\infty t k^t q_t
\]
Since $q_t$ decays superexponentially (roughly as $2^{-t^2/2}$), both series converge for any fixed $k$.
Applying \cref{cor:general_asymptotics}:
\begin{itemize}
    \item The first term is $F_{w_1}(n) \sim \gamma_{w_1} 2^n$.
    \item The second term involves $S_{w_2}(n) = \sum_{m=1}^{n-1} F_{w_2}(m)$. By the theorem, $S_{w_2}(n) \sim \gamma_{w_2} 2^n$.
\end{itemize}
Combining these results, we obtain the asymptotic behavior of the lower bound:
\begin{align*}
    P'(2, n, k) &\gtrsim \gamma_{w_1} 2^n + (k-1) \gamma_{w_2} 2^n \\
    &= \left( \sum_{t=1}^\infty k^t q_t + (k-1) \sum_{t=1}^\infty t k^t q_t \right) 2^n \\
    &= \left( \sum_{t=1}^\infty q_t k^t \big(1 + kt -t\big) \right) 2^n
\end{align*}

\end{proof}

However, using $\varphi(\rho) \geq t$ greatly worsens the lower bound. The authors believe that some Theorem fundamentally similar to \cref{thm:general_asymptotics} should follow replacing $2^n$ with $b^n$. Hence, intuition tells us the following conjecture, which is stronger than what we were able to prove.

\begin{conjecture}

For a fixed integer $k \ge 1$, as $n \to \infty$, the number of prime 2-ball, $k$-hand passing patterns of length $n$ is bounded below by:
\[
P'(2, n, k) \gtrsim \gamma_{P'}(k) (1+k)^n
\]
for some constant $\gamma_{P'}(k)$.

\end{conjecture}

\section{Lower Bound on \texorpdfstring{$N'(b,n)$}{N'(b,n)}} \label{sec:better-b-ball-bounds}

In this section, we establish a new lower bound for $N'(b,n)$, the number of prime juggling patterns of length $n$ with $b$ balls. We generalize the lower bound found by \cite{Banaian2016}, which states that $N'(b, n) \geq \frac{1}{b} \cdot b^n$. We follow a similar approach, by finding a construction that generates a family of prime patterns. In order to count these, we associate them with ``filled'' Ferrers diagrams of partitions of $n$ into distinct parts.

\subsection{The Filled Ferrers Diagram Construction}

We begin by defining a structure that encodes the essential characteristics of our prime juggling patterns.

\begin{definition}
Let $\lambda = (p_1, p_2, \ldots, p_t)$ be a partition of $n$ into $t$ distinct parts, such that $p_1 > p_2 > \cdots > p_t \ge 1$. A \textbf{filled Ferrers diagram} associated with $\lambda$ is a filling of the cells of the Ferrers diagram of $\lambda$ with integers from the set $\{0, 1, \ldots, b\}$, subject to the following constraints:
\begin{enumerate}
    \item The rightmost cell of each row must contain the integer $b$.
    \item No other cell in the diagram may contain the integer $b$.
    \item Each column of the diagram must contain at most one integer from the set $\{b, b-1\}$.
    \item All other cells must be filled with an integer from $\{0, 1, \ldots, b-2\}$.
\end{enumerate}
\end{definition}

From a valid filled Ferrers diagram, we construct a sequence $w$ of length $n$, which we call the \textit{landing word}.

\begin{definition}[Landing Word Construction]
Given a filled Ferrers diagram for a partition $\lambda$ with $t$ rows:
\begin{enumerate}
    \item Choose a cyclic ordering of the $t$ rows.
    \item For each row, read its entries from right to left (reverse order).
    \item Concatenate these reversed rows according to the chosen cyclic ordering to form a word $w = w_1 w_2 \dots w_n \in \{0, \dots, b\}^n$.
\end{enumerate}
\end{definition}

This word $w$ determines the timing of ball landings in the pattern. Specifically, a non-zero entry $v$ at index $j$ implies that a ball lands at beat $j$ having been the $v$-th highest ball in the air at the moment it was thrown.

\subsection{From Words to Patterns}

We now define an algorithm to map a landing word $w$ to a juggling pattern $u = u_1 u_2 \dots u_n$, where each $u_j$ is a juggling card $C_k$. Recall that $C_0$ represents a beat where no ball lands, and $C_k$ for $k > 0$ represents a beat where a ball lands and is thrown to become the $k$-th ball in the air (relative order).

\begin{definition}[Pattern Generation Algorithm]
Let $w \in \{0, \dots, b\}^n$. We construct the pattern $u$ as follows:
\begin{enumerate}
    \item For every index $j$ where $w_j = 0$, assign $u_j = C_0$.
    \item For each value $v \in \{1, \dots, b\}$:
    \begin{enumerate}
        \item Identify all indices $J_v = \{j \mid w_j = v\}$.
        \item For each $j \in J_v$, we determine the beat $k$ where the ball landing at $j$ was thrown. To find $k$, start at index $j$ and move backwards (leftwards) through the word $w$ cyclically. Maintain a ``bump count'', initialized to 0.
        \item At each step moving left, examine the entry. If the entry corresponds to a beat that has not yet been assigned a throw card, or if it has been assigned a card $C_x$ where $x$ is greater than the current bump count, increment the bump count by 1.
        \item Continue moving left until the bump count reaches $v$. The current beat is the throw time. Assign the card $C_v$ to this beat.
    \end{enumerate}
\end{enumerate}
\end{definition}

This algorithm reconstructs the throw sequence required to produce the landing schedule specified by $w$. The ``bump count'' logic accounts for the fact that a ball thrown as the $v$-th highest must wait for $v-1$ other balls (those lower than it) to land and be re-thrown before it becomes the lowest ball and lands itself.

\begin{theorem} \label{thm:construction-validity}
    For any word $w$ generated from a valid filled Ferrers diagram, the Pattern Generation Algorithm produces a valid, prime $b$-ball juggling pattern of period $n$.
\end{theorem}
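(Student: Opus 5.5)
Rather than analysing the bump-count algorithm step by step, I would argue through an equivalent description of the pattern: the landing schedule together with the relative order of the balls.

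\textbf{Step 1: a forward model.} Read $w$ cyclically. At a beat with $w_j=v\neq 0$, the lowest ball lands at $j$, and at that same beat some ball is thrown. I would show that the assignment of throw cards produced by the algorithm is the unique one for which, at every beat $j$ with $w_j=v$, the ball landing at $j$ was, when it was thrown, the $v$-th ball in the relative order (counted from the bottom). The bump count is exactly the number of lower balls that still have to land before this ball becomes lowest. Going backwards, each beat passed either lands a ball that was below ours, which increments the count, or does not affect our ball.

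\textbf{Step 2: well-definedness and periodicity.} I would check that each beat receives exactly one throw card, so that the number of nonzero letters of $w$ matches the number of throw beats. Here constraint (1) is used: each row ends (when read from the right, it begins) with a $b$, so there are $t\ge1$ throws to height $b$. A throw to height $b$ is a throw above all the other $b-1$ balls, and this fixes the relative order completely, which makes the backward search terminate consistently around the cycle. I would then argue that a cyclic card sequence in which every nonzero landing receives a matching throw is a valid $b$-ball pattern of period $n$, by the known bijection between cards and states (\cite{Buhler1994, Banaian2016}).

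\textbf{Step 3: primeness, the main obstacle.} I expect this to be the hardest step. Suppose a state repeats at beats $j\neq j'$. The state at a beat is determined by the landing times of the $b$ balls, equivalently by the future landing word read from that beat together with the order structure. Each row block starts with a $b$, a throw that places a ball on top and "resets" the configuration. So the state just after the start of a row is determined by the column index within that row and by which earlier columns carried $b$ or $b-1$. I would try to show that a repeated state forces two rows to share a column position with both containing $b$ or $b-1$ (the landing of the top or second ball), or forces a row length to be repeated, contradicting distinct parts. Constraint (3), at most one $b$ or $b-1$ per column, is exactly what distinguishes rows that are positionally aligned. Distinctness of the $p_i$ is what rules out the case where a full row and the block following it coincide, mirroring the 2-ball argument behind \cref{lem:pattern-sets-bijection}.

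Concretely, for the case analysis I would first handle repeats at the beginning of a row, then reduce a general repeat to that case by advancing both beats to the next beat at which $w=b$. For $j\neq j'$ these advanced beats are different, since the distance to the next $b$ pins down the column offset.
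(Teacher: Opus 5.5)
\textbf{There is a genuine gap, and it is in Step 3, which carries all the content of the theorem.} Both mechanisms you propose for primeness fail.

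Steps 1--2 are essentially routine: any card sequence containing a $C_b$ is a valid $b$-ball pattern, and constraint 1 supplies the $C_b$'s. One correction there: your bump rule is misstated. Going backwards, the ball's relative position rises only at landing beats whose rethrow is placed \emph{above} it, not at every landing of a lower ball. A $C_1$ rethrow, for instance, leaves its position unchanged.

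\emph{The reduction.} Advancing $j$ and $j'$ to the next beat with $w=b$ is invalid. Equal states at $j$ and $j'$ do not imply equal states at $j+d$ and $j'+d$, because different throws are made in between. That is exactly how a non-prime walk revisits a state and leaves along another edge.

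\emph{The ``reset'' picture.} It conflates the landing word with the throws. An entry $b$ of $w$ at beat $j$ only records that the ball landing at $j$ was thrown earlier as a $C_b$; the card at $j$ can be anything. For $b=3$, the rows $(1,0,3)$ and $(3)$ give $w=3013$ and $u=C_1C_0C_3C_3$ (siteswap $2055$), so the first row starts on a $C_1$. Even a genuine $C_b$ throw fixes only the top ball, not the rest of the configuration.

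\emph{The determination claim.} The claim that the state at a row start is determined by the column index and the positions of the entries $b$, $b-1$ is false. For $b=4$, the one-row diagrams $(0,0,4)$ and $(0,2,4)$ have the same $b$/$(b-1)$ skeleton. They give the siteswaps $(12,0,0)$ and $(9,3,0)$, whose states at the row start are $\langle 1,0,0,1,0,0,1,0,0,1\rangle$ and $\langle 1,1,0,1,0,0,1\rangle$. The claim also runs in the wrong direction for primeness: you need a quantity read \emph{off} the state that separates beats, not a description of the state by diagram data.

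\textbf{The missing ingredient} is the one the paper's sketch is built on: the \emph{top gap}, the distance between the two highest balls.
\begin{itemize}
\item It is a function of the state.
\item It is untouched by $C_0,\dots,C_{b-2}$ and can change only at $C_b$ and $C_{b-1}$ throws. There is exactly one such throw per entry $b$ or $b-1$ of the diagram, since the card values over a period are the letters of $w$.
\item Between two consecutive such throws the highest ball is fixed. Its remaining flight time therefore strictly decreases, so the states inside a segment are distinct.
\end{itemize}
Primeness thus reduces to showing that the top gaps created by the $C_b$ and $C_{b-1}$ throws of one period are exactly the column indices of the entries $b$ and $b-1$. Constraint 3 then makes these gaps pairwise distinct, just as disjointness of the spacing sets does for $b=2$. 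For $b=2$ this identification is precisely the correspondence of Banaian et al.

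The paper's own proof also stops short of proving this identification and defers the formal argument. Still, your plan never isolates an invariant of this kind, and without one the proposed case analysis has nothing to work with.
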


\begin{proof}

It is well known that any sequence of cards is a valid pattern.
    
For primality, the ``top gap'' of a state is the distance between the highest and second-highest balls. We aim to show that every pattern generated using this algorithm differs in the top gap at least once.

The top gap is fully determined by the placement of $C_b$ and $C_{b-1}$ throws. In our construction, the $C_b$ throws correspond to the $b$'s in the diagram (one per row), and the $C_{b-1}$ throws correspond to the $(b-1)$'s.
    
Constraint 3 ensures that in the diagram, the vertical alignment of $b$ and $b-1$ entries is unique for each column. 

\textcolor{red!70!black}{[A formal proof of this result is currently in preparation and will be included in a subsequent version of this preprint.]}

\end{proof}

\subsection{Enumeration}

We now count the number of distinct prime patterns generated by this construction.

\begin{lemma} \label{lem:diagram-count}
    Let $\lambda = (p_1, \dots, p_t)$ be a partition of $n$ into distinct parts. The number of valid filled Ferrers diagrams for $\lambda$ is
    \[
    D_\lambda = \frac{(b-1)^n}{\binom{t+b-1}{b} t!} \prod_{i=1}^t \left(\frac{i+b-1}{i+b-2}\right)^{p_i}
    \]
\end{lemma}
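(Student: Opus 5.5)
The plan is to count fillings column by column. The constraints act independently on different columns, so the total is a product of per-column counts. Draw the Ferrers diagram of $\lambda$ left-justified, with row $i$ of length $p_i$. Column $j$ then has height $h_j = \#\{i : p_i \ge j\}$, and $\sum_j h_j = n$. Since the parts are distinct, column $p_i$ contains exactly one rightmost cell, namely that of row $i$, and has height exactly $i$. Every other column contains no rightmost cell. Constraints 1 and 2 fix where the $b$'s go, constraint 3 restricts each column separately, and constraint 4 fills in the rest. This gives the following local counts:
\begin{itemize}
\item For a column $p_i$ (height $i$, containing the forced $b$), no $b-1$ is allowed, so the other $i-1$ cells take values in $\{0,\dots,b-2\}$, giving $(b-1)^{i-1}$ fillings.
\item For a column of height $h$ with no $b$, either no cell is $b-1$, giving $(b-1)^h$ fillings, or exactly one of the $h$ cells is $b-1$, giving $h(b-1)^{h-1}$. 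The total is $(b-1)^{h}\cdot \frac{h+b-1}{b-1}$.
\end{itemize}

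Next I would group the columns by height. With the convention $p_{t+1}=0$, the columns of height $i$ other than column $p_i$ are those $j$ with $p_{i+1}<j<p_i$; there are $p_i-p_{i+1}-1$ of them. Write $x_i = \frac{i+b-1}{b-1}$. Pulling out $(b-1)^{h_j}$ from every column gives
\[
D_\lambda = (b-1)^n \cdot (b-1)^{-t}\prod_{i=1}^t x_i^{\,p_i-p_{i+1}-1}.
\]
I would then telescope the product by summation by parts:
\[
\prod_{i} x_i^{p_i-p_{i+1}} = x_1^{p_1}\prod_{i\ge 2}\Big(\frac{x_i}{x_{i-1}}\Big)^{p_i} = \prod_{i=1}^t\Big(\frac{i+b-1}{i+b-2}\Big)^{p_i},
\]
using $x_1 = \frac{b}{b-1}$, which is the $i=1$ case of the same ratio. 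This produces exactly the product in the statement.

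The remaining constant is
\[
(b-1)^{-t}\prod_{i=1}^t x_i^{-1} = \prod_{i=1}^t\frac{1}{i+b-1} = \frac{(b-1)!}{(t+b-1)!},
\]
which must be matched against $\frac{1}{\binom{t+b-1}{b}t!} = \frac{b!}{t\,(t+b-1)!}$. This normalization check is the step I expect to be the obstacle. My column count gives $\frac{(b-1)!}{(t+b-1)!}$, while the stated constant is $\frac{b!}{t\,(t+b-1)!}$, so the two differ by a factor of $b/t$. As a sanity check, I would test the case $b=2$ against the $c_t(n)$ formula of \cite{Banaian2016}, where the prefactor is $\frac{1}{t(t+1)}$, and also test tiny cases by hand, such as $t=1$, $n=p_1$.

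If the direct count persists, I would look for the source of the extra factor $b/t$ in one of two places. The first is the cyclic orderings of rows used later. The ordering contributes $(t-1)!$ choices, which suggests the stated constant already folds part of the rotation count into $D_\lambda$. The second is a different reading of constraint 3, for instance a lower bound that counts only a sub-family of fillings. I would settle which convention makes the statement hold before writing the final product form.
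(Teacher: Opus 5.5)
Your column-by-column count is correct, and it is the same argument the paper gives. The paper also finds $(b-1)^{i-1}$ fillings for column $p_i$ and $(b-1)^{i-1}(b+i-1)$ for each of the other $p_i-p_{i+1}-1$ columns of height $i$, and then telescopes using $p_{t+1}=0$. The mismatch you isolated is genuine, and it is the statement that is wrong, not your computation. The paper reaches its constant only through the step
\[
\prod_{i=1}^t (b+i-1)=\frac{(t+b-1)!}{(b-1)!}=t!\binom{t+b-1}{b},
\]
and the second equality fails unless $t=b$. In fact $t!\binom{t+b-1}{b}=\frac{t\,(t+b-1)!}{b!}$, whereas $\frac{(t+b-1)!}{(b-1)!}=t!\binom{t+b-1}{b-1}$. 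The ratio of the two is exactly your factor $b/t$.

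Your proposed tiny case settles this regardless of how constraint 3 is read. For $t=1$, $p_1=n=1$, constraint 1 forces the single cell to be $b$, so there is exactly one filling. The displayed formula instead gives $(b-1)\cdot\frac{b}{b-1}=b$. More generally, for $t=1$ the true count is $b^{p_1-1}$ and the formula gives $b^{p_1}$. So there is no alternative convention to find. What your argument proves, and what the lemma should state, is
\[
D_\lambda=\frac{(b-1)!}{(t+b-1)!}\,(b-1)^n\prod_{i=1}^t\left(\frac{i+b-1}{i+b-2}\right)^{p_i}=\frac{(b-1)^n}{t!\binom{t+b-1}{b-1}}\prod_{i=1}^t\left(\frac{i+b-1}{i+b-2}\right)^{p_i}.
\]

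Your hunch about the row orderings explains why the downstream formulas are nonetheless consistent. Since $\frac{1}{b\binom{t+b-1}{b}}=\frac{(b-1)!\,(t-1)!}{(t+b-1)!}$, the summand $\frac{(b-1)^n}{b\binom{t+b-1}{b}}\prod_{i=1}^t\bigl(\frac{i+b-1}{i+b-2}\bigr)^{p_i}$ in \cref{thm:b-ball-lower-bound} equals $(t-1)!\,D_\lambda$ for the corrected $D_\lambda$. That is one term per cyclic ordering of the rows. For $b=2$ it is exactly the summand of $c_t(n)$, which is the sanity check you proposed. So the expression in that theorem and the constant $\gamma_b$ are consistent with the corrected count. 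Only two things need correcting: the constant in this lemma, and the bookkeeping in the theorem's proof that identifies the summand with ``$t!$ linear orderings'' times $D_\lambda$.
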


\begin{proof}

We count the valid fillings column by column. Let $h_j$ be the height of column $j$ in the Ferrers diagram. Since the parts are distinct, the column heights decrease in steps of 1. Specifically, for the columns $j$ in the range $[p_{i+1} + 1, p_i]$ (where $p_{t+1}=0$), the height is $h_j = i$.

Consider a column of height $h$. There are two possible cases:
\begin{itemize}
    \item \textbf{Case 1: A row ends in this column.} This occurs exactly when $j = p_i$ for some $i$. By Constraint 1, the cell $(i, p_i)$ must be filled with $b$. By Constraint 3, no other cell in this column can be $b$ or $b-1$. The remaining $h-1$ cells must be filled with values from $\{0, \dots, b-2\}$. There are $(b-1)^{h-1}$ ways to fill such a column.
    \item \textbf{Case 2: No row ends in this column.} This occurs for $j \in [p_{i+1}+1, p_i - 1]$. By Constraint 1, no cell is $b$. By Constraint 3, at most one cell is $b-1$.
    \begin{itemize}
        \item \textbf{Subcase 2a:} No cell is $b-1$. All $h$ cells are from $\{0, \dots, b-2\}$. There are $(b-1)^h$ ways to fill this.
        \item \textbf{Subcase 2b:} Exactly one cell is $b-1$. There are $h$ choices for the position, and the rest are from $\{0, \dots, b-2\}$. There are $h(b-1)^{h-1}$ ways to fill this.
    \end{itemize}
    Thus, there are $(b-1)^h + h(b-1)^{h-1} = (b-1)^{h-1}(b-1+h)$ ways to fill such a column.
\end{itemize}

For the range of columns corresponding to row $i$ (length $p_i - p_{i+1}$), we have 1 column of Case 1 and $p_i - p_{i+1} - 1$ columns of Case 2. The height is $h=i$. The number of ways for this range is:
\[
(b-1)^{i-1} \cdot \left[ (b-1)^{i-1}(b+i-1) \right]^{p_i - p_{i+1} - 1} = (b-1)^{(i-1)(p_i - p_{i+1})} (b+i-1)^{p_i - p_{i+1} - 1}
\]
Taking the product over all $i=1, \dots, t$:
\[
\prod_{i=1}^t (b-1)^{(i-1)(p_i - p_{i+1})} (b+i-1)^{p_i - p_{i+1} - 1}
\]
The exponent of $(b-1)$ sums to $\sum (i-1)(p_i - p_{i+1}) = \sum p_i - p_1 = n - p_1$.

The product of the $(b+i-1)$ terms can be rearranged using the fact that $p_{t+1}=0$:
\[
\prod_{i=1}^t (b+i-1)^{p_i - p_{i+1} - 1} = \frac{\prod_{i=1}^t (b+i-1)^{p_i}}{\prod_{i=1}^t (b+i-1)^{p_{i+1}} \prod_{i=1}^t (b+i-1)}
\]
The denominator term $\prod_{i=1}^t (b+i-1) = \frac{(t+b-1)!}{(b-1)!} = t! \binom{t+b-1}{b}$.
Shifting indices in the other denominator product, we get:
\[
\frac{1}{t! \binom{t+b-1}{b}} \cdot \frac{b^{p_1} \prod_{i=2}^t (b+i-1)^{p_i}}{\prod_{i=2}^t (b+i-2)^{p_i}} = \frac{1}{t! \binom{t+b-1}{b}} b^{p_1} \prod_{i=2}^t \left(\frac{b+i-1}{b+i-2}\right)^{p_i}
\]
Combining with the $(b-1)^{n-p_1}$ term:
\[
D_\lambda = \frac{(b-1)^n}{t! \binom{t+b-1}{b}} \left(\frac{b}{b-1}\right)^{p_1} \prod_{i=2}^t \left(\frac{b+i-1}{b+i-2}\right)^{p_i} = \frac{(b-1)^n}{t! \binom{t+b-1}{b}} \prod_{i=1}^t \left(\frac{i+b-1}{i+b-2}\right)^{p_i}
\]
\end{proof}

To obtain the total number of patterns, we sum over all partitions $\lambda$. For each partition, there are $t!$ distinct linear orderings of the rows and each ordering produces a word $w$. 

\begin{theorem} \label{thm:b-ball-lower-bound}
    The number of prime $b$-ball juggling patterns of period $n$ is bounded below by:
    \[
    N'(b,n) \ge \frac{1}{b} \sum_{t \ge 1} \sum_{\substack{p_1 > \cdots > p_t \ge 1 \\ p_1 + \cdots + p_t = n}} \frac{(b-1)^n}{\binom{t+b-1}{b}} \prod_{i=1}^{t} \left(\frac{i+b-1}{i+b-2}\right)^{p_i}
    \]
\end{theorem}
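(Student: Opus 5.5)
The plan is to combine \cref{thm:construction-validity} with the count of \cref{lem:diagram-count}, and then control the overcounting introduced by the choice of row ordering and by the cyclic rotations of the word. For each partition $\lambda$ of $n$ into $t$ distinct parts, and each valid filled Ferrers diagram of $\lambda$, each of the $t!$ linear orderings of the rows produces a landing word $w$. By \cref{thm:construction-validity}, the Pattern Generation Algorithm turns $w$ into a prime $b$-ball pattern of period $n$. So we have a map
\[
\Phi:\ \{(\lambda, \text{filling}, \text{ordering})\}\longrightarrow \mathcal{N}'(b,n),
\]
and the number of inputs is $\sum_\lambda t!\,D_\lambda$. Substituting \cref{lem:diagram-count}, the $t!$ cancels, and this equals the double sum in the statement without the factor $\frac1b$. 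The theorem therefore reduces to showing that every fiber of $\Phi$ (with patterns taken up to rotation) has size at most $b$.

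To bound the fibers, I would first recover the partition and the diagram from the pattern. The pattern determines its card sequence up to rotation. The landing word $w$ is recovered from the cards, since $w_j$ records which relative height the ball landing at beat $j$ had when thrown. The rows of the diagram are recovered by cutting $w$ immediately before each occurrence of $b$. This works because constraint 1 and constraint 2 of the definition put exactly one $b$ in each row, and since rows are read right to left, each $b$ sits at the start of its reversed row. Hence, once the cyclic position is fixed, the row lengths $p_i$, the entries of each row, and the cyclic order of the rows are all determined.

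The remaining ambiguity is the one already present in the original $\frac1b b^n$ bound of \cite{Banaian2016}. Linear orderings of the rows that are cyclic shifts of one another give words that are rotations of each other, and in a $b$-ball pattern each ball's cycle through the relative-height layers can be rotated in up to $b$ inequivalent ways relative to the starting beat. I would argue that the map from (diagram, linear ordering) to patterns up to rotation is at most $b$-to-one. Concretely, I would fix a canonical starting beat, namely the first beat of the row containing $p_1$, and show that the only freedom left is which of at most $b$ equivalent starting configurations of the balls is chosen. Dividing by $b$ then gives the inequality.

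I expect this last step to be the main obstacle. One has to make precise how many distinct (ordering, rotation) data collapse to the same pattern. Bounding it by $b$, rather than by $t$ or $t b$, needs care, because the $t$ cyclic rotations of the row ordering also produce rotated words. My first attempt would be to argue that a linear ordering together with its rotation class is determined once the starting row is fixed, so that the $t!$ orderings contribute $(t-1)!$ genuinely distinct cyclic arrangements times $t$ starting points. I would then check whether those $t$ starting points are absorbed into the $\binom{t+b-1}{b}$ normalization, or whether a cruder but still valid accounting (cyclic rotation by $n$ positions of a length-$n$ word, with at most $b$ coincidences coming from the ball symmetry) suffices. If the clean argument fails, I would fall back on counting words up to cyclic rotation and bound the stabilizer size directly.
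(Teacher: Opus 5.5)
\textbf{The gap is the fiber bound.} The claim that every fiber of $\Phi$ has size at most $b$ is false. The Pattern Generation Algorithm scans $w$ cyclically. So the $t$ linear orderings that belong to one cyclic order of the rows produce words that are rotations of one another, hence rotations of a single card sequence, hence one and the same pattern. Every fiber therefore has at least $t$ elements, and in fact exactly $t$: the pattern determines its cyclic landing word, cutting before each $b$ recovers the rows, and because the rows have distinct lengths there are exactly $t$ rotations beginning with a $b$. There is no extra $b$-fold ``ball symmetry'' to divide out, since a pattern fixes its cyclic card sequence outright. So whenever $t>b$ (e.g.\ $b=2$, $\lambda=(3,2,1)$) your bound fails. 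Write $X_\lambda=\frac{(b-1)^n}{\binom{t+b-1}{b}}\prod_i\bigl(\frac{i+b-1}{i+b-2}\bigr)^{p_i}$. Pairing the true fiber size $t$ with \cref{lem:diagram-count} as printed gives $\sum_\lambda X_\lambda/t$, not $\sum_\lambda X_\lambda/b$.

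\textbf{Where the mismatch actually comes from.} The error is in the lemma, not in the fibers. Its proof uses $\prod_{i=1}^t(b+i-1)=t!\binom{t+b-1}{b}$, but this product equals $\frac{(t+b-1)!}{(b-1)!}=b\,(t-1)!\binom{t+b-1}{b}$. Hence the printed $D_\lambda$ is $\frac{b}{t}$ times the true count; for $t=1$ a single row has $b^{n-1}$ fillings, while the printed formula gives $b^n$. With the corrected value, $(t-1)!\,D_\lambda=X_\lambda/b$, which is exactly the summand. The $\frac1b$ is therefore not an overcounting correction at all.

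\textbf{How to repair the argument.} Your idea of fixing the start at the row of length $p_1$ is the right one, and it leaves nothing further to divide by. Count pairs (filled diagram, cyclic order of rows). Show they map injectively to patterns by the recovery argument above, granting that the algorithm's output really has landing word $w$. Then apply \cref{thm:construction-validity}. For $b=2$ this reproduces $\sum_t c_t(n)=N'(2,n)$, which is a good sanity check. Note that the paper's own proof also stops at identifying the summand with $t!\,D_\lambda$ and never justifies the $\frac1b$, so it cannot be used to fill this step.
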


\begin{proof}

The term inside the sum corresponds to $t! \cdot D_\lambda$, which is the number of valid landing words $w$ that can be formed by concatenating the rows of the filled Ferrers diagrams for a fixed partition $\lambda$ in any linear order. Summing over all partitions gives the total number of such words.

\textcolor{red!70!black}{[A formal proof of this result is currently in preparation and will be included in a subsequent version of this preprint.]}

\end{proof}

\subsection{Asymptotic Analysis}

We now focus on the asymptotic behavior of the lower bound found in the previous section.

\begin{theorem} \label{thm:asymptotic}
    Let $b \ge 2$. Then,
    \[
    \frac1b\sum_{t\ge 1}
    \sum_{\substack{p_1>\cdots>p_t\ge 1\\p_1+\cdots+p_t=n}}\frac{(b-1)^n}{\binom{t+b-1}{b}}\prod_{i=1}^t\bigg(\frac{i+b-1}{i+b-2}\bigg)^{p_i}=\big(\gamma_b-o(1)\big)b^n
    \]
    where
    \[
    \gamma_b=\frac1b\sum_{t\ge1}\frac{(t-1)!(b-1)^{(t+2)(t-1)/2}}{\prod_{i=2}^t\big((b-1)b^i-(b-1)^i(b+i-1)\big)}
    \]
\end{theorem}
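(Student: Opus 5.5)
The plan is to reduce the inner sum, for each fixed $t$, to a coefficient of a rational generating function. The dominant pole at $z=1/b$ should give the main term $\gamma_b b^n$. Summing over $t$ will then follow the truncation argument of \cref{thm:general_asymptotics}.

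\textbf{Step 1: rewrite the weight as a product of geometric series.} Fix $t$ and write the distinct parts through their differences: $p_i=d_i+d_{i+1}+\dots+d_t$ with all $d_j\ge 1$. Then $n=\sum_j j\,d_j$. Put $a_i=\frac{i+b-1}{i+b-2}$. The product telescopes, $a_1a_2\cdots a_j=\frac{b+j-1}{b-1}$, and hence
\[
(b-1)^n\prod_{i=1}^t a_i^{p_i}=\prod_{j=1}^t x_j^{d_j},\qquad x_j=(b+j-1)(b-1)^{j-1}.
\]
The $t$-th summand, call it $A_t(n)$, is therefore $\frac{1}{b\binom{t+b-1}{b}}[z^n]\prod_{j=1}^t \frac{x_jz^j}{1-x_jz^j}$.

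\textbf{Step 2: single-$t$ asymptotics.} We have $x_1=b$. For $j\ge2$, the binomial expansion of $b^j=((b-1)+1)^j$ gives $x_j<b^j$, since $(b-1)^j+j(b-1)^{j-1}$ is only part of it. So every other pole satisfies $|z|=x_j^{-1/j}>1/b$.

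Write the generating function as $\frac{bz}{1-bz}H_t(z)$ with $H_t(z)=\prod_{j=2}^t\frac{x_jz^j}{1-x_jz^j}$. Then $[z^n]$ equals $b^n\sum_{k<n}h_kb^{-k}$ with all $h_k\ge0$. This gives the clean upper bound $A_t(n)\le \kappa_t b^n$, where
\[
\kappa_t=\frac{H_t(1/b)}{b\binom{t+b-1}{b}},\qquad H_t(1/b)=\prod_{j=2}^t\frac{x_j}{b^j-x_j}.
\]
It also gives the lower bound $A_t(n)\ge \kappa_t b^n-O_t(\beta_t^n)$ with some $\beta_t<b$, since the tail $\sum_{k\ge n}h_kb^{-k}$ decays at the rate of the next pole.

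Next check that $\sum_t\kappa_t=\gamma_b$. Using $\prod_{j=2}^t(b+j-1)=(t+b-1)!/b!$ against $\binom{t+b-1}{b}$ gives the factor $(t-1)!$. The denominator rewrites as $(b-1)b^j-(b-1)^j(b+j-1)=(b-1)(b^j-x_j)$. The powers of $b-1$ collect to $\sum_{j=2}^t(j-1)+(t-1)=(t+2)(t-1)/2$. This matches the displayed $\gamma_b$.

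\textbf{Step 3: sum over $t$.} Convergence of $\sum_t\kappa_t$ follows because $x_j/(b^j-x_j)$ behaves like $(b+j-1)(b-1)^{j-1}/b^j$, so $\kappa_t$ decays roughly like $((b-1)/b)^{t^2/2}$ times polynomial factors.

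The upper bound $\sum_tA_t(n)\le\gamma_bb^n$ is immediate from Step 2. For the lower bound I would copy the proof of \cref{thm:general_asymptotics}, part 1. Given $\varepsilon$, truncate at $K$ with $\sum_{t\le K}\kappa_t>\gamma_b-\varepsilon/2$. Use nonnegativity to drop $t>K$. Absorb the finitely many errors $O(\beta_t^n)$, with $\beta=\max_{t\le K}\beta_t<b$, for $n$ large.

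\textbf{Main obstacle.} The main obstacle is making the lower-bound error in Step 2 explicit, because the poles of $H_t$ are roots of unity times $x_j^{-1/j}$ and could coincide. I would first try to avoid computing residues at all. The identity $b^n\sum_{k\ge n}h_kb^{-k}\le b^n H_t(\beta/b)(\beta/b)^{-n}\cdot$const, valid for any $\beta\in(\max_j x_j^{1/j}\cdot\tfrac{1}{1},b)$ with $H_t$ convergent at $z=\beta/b^2\cdot b$, gives a geometric tail bound directly. This is the same role $r_t\sqrt3^n$ plays in \cref{lem:ct_bounds}.
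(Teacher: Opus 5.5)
Your proposal is correct and is essentially the paper's argument in generating-function language. The identity $[z^n]\frac{bz}{1-bz}H_t(z)=b^n\sum_{k<n}h_kb^{-k}$ is exactly the paper's observation that, after dividing by $b^n$, the factor $R_1=1$ frees $\delta_1$ and leaves a truncated product of geometric series $S_{n,t}$ increasing to $L_t=\kappa_t$. Your truncation at $K$ plays the role of the paper's appeal to Tannery's theorem.

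Your ``main obstacle'' is not needed: for each fixed $t\le K$ the tail $\sum_{k\ge n}h_kb^{-k}$ tends to $0$ simply because $H_t(1/b)<\infty$. If you do want a rate, the correct bound is $b^n\sum_{k\ge n}h_kb^{-k}\le\beta^nH_t(1/\beta)$ for $\max_{2\le j\le t}x_j^{1/j}<\beta<b$. The version you display, with $H_t$ evaluated at $\beta/b$, is garbled, since that point lies outside the disc of convergence of $H_t$ when $t\ge2$.
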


\begin{proof}
    Let $S_n(b)$ denote the sum on the left-hand side of the equation. We aim to prove that $\lim_{n \to \infty} \frac{S_n(b)}{b^n} = \gamma_b$.
    Let $A_n = \frac{S_n(b)}{b^n}$. Substituting the expression for $S_n(b)$, we have:
    \[
    A_n = \frac{1}{b^{n+1}} \sum_{t\ge 1} \frac{(b-1)^n}{\binom{t+b-1}{b}} \sum_{\substack{p_1>\cdots>p_t\ge 1\\p_1+\cdots+p_t=n}} \prod_{i=1}^t \left(\frac{i+b-1}{i+b-2}\right)^{p_i}.
    \]
    Rearranging the terms, we group the powers of $b$ and $b-1$:
    \[
    A_n = \frac{1}{b} \sum_{t\ge 1} \frac{1}{\binom{t+b-1}{b}} \sum_{\substack{p_1>\cdots>p_t\ge 1\\p_1+\cdots+p_t=n}} \left(\frac{b-1}{b}\right)^n \prod_{i=1}^t \left(\frac{i+b-1}{i+b-2}\right)^{p_i}
    \]
    Since $\sum_{i=1}^t p_i = n$, we can distribute the factor $\left(\frac{b-1}{b}\right)^n$ into the product:
    \[
    \left(\frac{b-1}{b}\right)^n \prod_{i=1}^t \left(\frac{i+b-1}{i+b-2}\right)^{p_i} = \prod_{i=1}^t \left( \frac{b-1}{b} \cdot \frac{i+b-1}{i+b-2} \right)^{p_i}
    \]
    Now define $\rho_i = \frac{b-1}{b} \cdot \frac{i+b-1}{i+b-2}$.
    For $i=1$, we have $\rho_1 = \frac{b-1}{b} \cdot \frac{1+b-1}{1+b-2} = \frac{b-1}{b} \cdot \frac{b}{b-1} = 1$.
    For $i \ge 2$, we can write
    \[
    \rho_i = \frac{bi + b^2 - b - i - b + 1}{bi + b^2 - 2b} = \frac{bi + b^2 - 2b - (i-1)}{bi + b^2 - 2b} = 1 - \frac{i-1}{b(i+b-2)}
    \]
    Assuming $b > 1$ and $i \ge 2$, we have $0 < \rho_i < 1$.

    Now, we change variables from the partition parts $p_i$ to the differences $\delta_i$. Let $\delta_t = p_t$, and $\delta_{i-1} = p_{i-1} - p_i$ for $i = 1, 2, \dots, t-1$.
    Since $p_1 > p_2 > \dots > p_t \ge 1$, we have $\delta_i \ge 1$ for all $i=1, \dots, t$.
    We can express $p_i$ in terms of $\delta_j$:
    \[
    p_i = \sum_{j=i}^t \delta_j
    \]
    The condition $\sum_{i=1}^t p_i = n$ becomes:
    \[
    \sum_{i=1}^t \sum_{j=i}^t \delta_j = \sum_{j=1}^t j \delta_j = n
    \]
    The product term transforms as follows:
    \[
    \prod_{i=1}^t \rho_i^{p_i} = \prod_{i=1}^t \rho_i^{\sum_{j=i}^t \delta_j} = \prod_{j=1}^t \left( \prod_{i=1}^j \rho_i \right)^{\delta_j}.
    \]
    Let $R_j = \prod_{i=1}^j \rho_i$. Then the term is $\prod_{j=1}^t R_j^{\delta_j}$.
    Since $\rho_1 = 1$, we have $R_1 = 1$. Thus $R_1^{\delta_1} = 1$.
    For $j \ge 2$, $R_j = R_{j-1} \rho_j < R_{j-1}$. Since $R_1=1$, we have $R_j < 1$ for all $j \ge 2$.
    Explicitly,
    \[
    R_j = \prod_{i=1}^j \frac{b-1}{b} \frac{i+b-1}{i+b-2} = \left(\frac{b-1}{b}\right)^j \frac{j+b-1}{b-1} = \frac{(b-1)^{j-1}(j+b-1)}{b^j}
    \]
    Let $C_t = \frac{1}{b \binom{t+b-1}{b}}$. We can rewrite $A_n$ as:
    \[
    A_n = \sum_{t \ge 1} C_t \sum_{\substack{\delta_1, \dots, \delta_t \ge 1 \\ \sum_{j=1}^t j \delta_j = n}} \prod_{j=2}^t R_j^{\delta_j}.
    \]
    Let $S_{n,t}$ be the inner term for a fixed $t$:
    \[
    S_{n,t} = C_t \sum_{\substack{\delta_1, \dots, \delta_t \ge 1 \\ \delta_1 + \sum_{j=2}^t j \delta_j = n}} \prod_{j=2}^t R_j^{\delta_j}.
    \]
    The constraint determines $\delta_1$ because $\delta_1 = n - \sum_{j=2}^t j \delta_j$. The condition $\delta_1 \ge 1$ implies $\sum_{j=2}^t j \delta_j \le n-1$.
    Thus,
    \[
    S_{n,t} = C_t \sum_{\substack{\delta_2, \dots, \delta_t \ge 1 \\ \sum_{j=2}^t j \delta_j \le n-1}} \prod_{j=2}^t R_j^{\delta_j}
    \]
    Note that if $n$ is small such that no such $\delta_j$ exist, the sum is empty and $S_{n,t} = 0$.
    Hence, we define the limit term $L_t$ by removing the upper bound constraint on the sum:
    \[
    L_t = C_t \sum_{\delta_2, \dots, \delta_t \ge 1} \prod_{j=2}^t R_j^{\delta_j} = C_t \prod_{j=2}^t \left( \sum_{\delta=1}^\infty R_j^\delta \right)
    \]
    Since $0 < R_j < 1$ for $j \ge 2$, the geometric series converge:
    \[
    \sum_{\delta=1}^\infty R_j^\delta = \frac{R_j}{1-R_j}.
    \]
    So,
    \[
    L_t = C_t \prod_{j=2}^t \frac{R_j}{1-R_j}.
    \]
    Clearly, for any fixed $t$, as $n \to \infty$, the condition $\sum_{j=2}^t j \delta_j \le n-1$ is eventually satisfied for any fixed tuple $(\delta_2, \dots, \delta_t)$. Since the terms are positive, $S_{n,t}$ is a partial sum of the convergent series defining $L_t$. Thus, $\lim_{n \to \infty} S_{n,t} = L_t$.
    Moreover, $S_{n,t} \le L_t$ for all $n$.

    We now verify that $\sum_{t \ge 1} L_t$ converges.
    We compute the ratio $\frac{R_j}{1-R_j}$:
    \[
    1 - R_j = 1 - \frac{(b-1)^{j-1}(j+b-1)}{b^j} = \frac{b^j - (b-1)^{j-1}(j+b-1)}{b^j}
    \]
    Thus,
    \[
    \frac{R_j}{1-R_j} = \frac{(b-1)^j (j+b-1)}{(b-1)b^j - (b-1)^j(b+j-1)}
    \]
    Substituting this into $L_t$:
    \begin{align*}
        L_t &= C_t \prod_{j=2}^t \frac{(b-1)^j (j+b-1)}{(b-1)b^j - (b-1)^j(b+j-1)} \\
        &= C_t \frac{(b-1)^{\sum_{j=2}^t j} \prod_{j=2}^t (j+b-1)}{\prod_{j=2}^t (b-1)b^j - (b-1)^j(b+j-1)}
    \end{align*}
    We have $\sum_{j=2}^t j = \frac{t(t+1)}{2} - 1 = \frac{(t+2)(t-1)}{2}$.
    Also, $\prod_{j=2}^t (j+b-1) = \frac{(t+b-1)!}{(b+1)!} \cdot (b+1) = \frac{(t+b-1)!}{b!}$.
    We can expand $C_t = \frac{1}{b \binom{t+b-1}{b}} = \frac{b! (t-1)!}{b (t+b-1)!}$.
    Thus,
    \begin{align*}
        L_t &= \frac{b! (t-1)!}{b (t+b-1)!} \cdot \frac{(b-1)^{(t+2)(t-1)/2} (t+b-1)!}{b! \prod_{j=2}^t (b-1)b^j - (b-1)^j(b+j-1)} \\
        &= \frac{1}{b} \frac{(t-1)! (b-1)^{(t+2)(t-1)/2}}{\prod_{j=2}^t (b-1)b^j - (b-1)^j(b+j-1)}
    \end{align*}
    This is exactly the $t$-th term of the series $\gamma_b$ given in the problem.
    For large $t$, $(b-1)b^j - (b-1)^j(b+j-1) \approx (b-1)b^j$, so $\prod (b-1)b^j - (b-1)^j(b+j-1) \approx (b-1)^{t-1} b^{t^2/2}$. The numerator grows like $(b-1)^{t^2/2}$. The ratio behaves like $(b-1/b)^{t^2/2}$, which decays rapidly and thus $\sum L_t$ converges.

    By Tannery's Theorem, since $S_{n,t} \to L_t$ as $n \to \infty$, $|S_{n,t}| \le L_t$, and $\sum L_t < \infty$, we have:
    \[
    \lim_{n \to \infty} A_n = \lim_{n \to \infty} \sum_{t \ge 1} S_{n,t} = \sum_{t \ge 1} \lim_{n \to \infty} S_{n,t} = \sum_{t \ge 1} L_t.
    \]
    Since $\sum_{t \ge 1} L_t = \gamma_b$, we conclude that
    \[
    \lim_{n \to \infty} \frac{S_n(b)}{b^n} = \gamma_b
    \]
    And the inequalities imply $S_n(b) = (\gamma_b - o(1)) b^n$.
\end{proof}

This proves that the new formula provides a significant improvement over the previous bound of $\frac{1}{b} b^n$, as shown in \cref{tab:bound_comparison}.

\begin{table}[H]
    \centering
    \[
    \begin{array}{c|l|l}
         & 1/b & \gamma_b  \\ \hline
        b = 3 & \texttt{0.3333\ldots} & \texttt{~~2.7043\ldots} \\
        b = 4 & \texttt{0.2500\ldots} & \texttt{~~6.9306\ldots} \\
        b = 5 & \texttt{0.2000\ldots} & \texttt{~20.4346\ldots} \\
        b = 6 & \texttt{0.1666\ldots} & \texttt{~65.9828\ldots} \\
        b = 7 & \texttt{0.1428\ldots} & \texttt{226.7906\ldots} \\
    \end{array}
    \]
    \caption{Comparing old and new bound}
    \label{tab:bound_comparison}
\end{table}

\section{Infinite State Graph} \label{sec:infinite-state-graph}

Thus far, we have been considering juggling patterns with a finite number of balls.  We say that the states in every normal juggling pattern with $b$ balls lies in the infinite state graph $G_b$.  Since any state graph is by definition infinite, we will take the convention of dropping the ``infinite'' and calling it a state graph.

\begin{example} \label{ex:inf-sg-G2-juggling-ex}
    Consider the following states, which are contained in the state graph $G_2$:  $\langle 1, 0, 1 \rangle$, $\langle 0, 1, 0, 0, 0, 1 \rangle$, $\langle 1, 0, 0, 0, 1 \rangle$, and $\langle 0, 1, 0, 1 \rangle$.  Together, they form the valid juggling pattern
    \begin{equation*}
        \langle 1, 0, 1 \rangle \rightarrow \langle 0, 1, 0, 0, 0, 1 \rangle \rightarrow \langle 1, 0, 0, 0, 1 \rangle \rightarrow \langle 0, 1, 0, 1 \rangle \rightarrow \langle 1, 0, 1 \rangle
    \end{equation*}
\end{example}

Suppose we want to look a similar pattern but with 3 balls instead.  We can create this pattern by adding a 1 to the beginning of each state.  This gives us the valid juggling pattern
\begin{equation*}
    \langle 1, 1, 0, 1 \rangle \rightarrow \langle 1, 0, 1, 0, 0, 0, 1 \rangle \rightarrow \langle 1, 1, 0, 0, 0, 1 \rangle \rightarrow \langle 1, 0, 1, 0, 1 \rangle \rightarrow \langle 1, 1, 0, 1 \rangle
\end{equation*}
Again suppose we want to look a similar pattern but with 4 balls.  Following the same procedure as above, we have the valid juggling pattern
\begin{equation*}
    \langle 1, 1, 1, 0, 1 \rangle \rightarrow \langle 1, 1, 0, 1, 0, 0, 0, 1 \rangle \rightarrow \langle 1, 1, 1, 0, 0, 0, 1 \rangle \rightarrow \langle 1, 1, 0, 1, 0, 1 \rangle \rightarrow \langle 1, 1, 1, 0, 1 \rangle
\end{equation*}

Since we can repeat this process for $b$  balls, we see that the state graphs are embedded in each other. In other words, as we add balls, we gain structure.  That is, $G_1 \subseteq G_2  \subseteq G_3 \subseteq \cdots \subseteq G_b$.  Not only can we repeat this process for finitely many balls, we can also repeat the process for infinitely many balls.  In such a case, we have the infinite state graph $G_\infty$.  Moreover, we have that $G_1  \subseteq G_2 \subseteq G_3 \subseteq \cdots \subseteq G_\infty$.

When dealing with states in $G_\infty$, we will adopt a variation on how we notate states.  Instead of using angled bracket notation, we will write states as an infinite binary string.  
\begin{example} \label{ex:inf-sg-Ginf-juggling-ex}
    Consider the juggling pattern in \cref{ex:inf-sg-G2-juggling-ex}.  The analogous juggling pattern in $G_\infty$ is as follows:
    \begin{equation*}
        \dots 110100 \ldots \rightarrow \dots 1101000100 \ldots \rightarrow \dots 11000100 \ldots \rightarrow \dots 11010100 \ldots \rightarrow \dots 110100 \dots
    \end{equation*}
\end{example}

However, this notation contains superfluous information, as every state will have a prefix of an infinite number of 1's and a suffix of an infinite number of 0's.  Thus, we will truncate the states, removing both the prefix and suffix.  Since every state also has a 0 immediately following the prefix, we will take the convention of removing that 0 as well.  Thus, we abbreviate the states to contain only the relevant information on the state.  In \cref{ex:inf-sg-Ginf-juggling-ex}, in place of the expanded states, we can now write the pattern with the abbreviated states as such:
\begin{equation*}
    1 \rightarrow 10001 \rightarrow 001 \rightarrow 101 \rightarrow 1
\end{equation*}

Another reason we choose to write states in this abbreviated form is we no longer think of state transitions explicitly as the time until a ball lands.  While this intuition works with a finite number of balls, it breaks down with an infinite number of balls.  Instead, we introduce the following two rules for transitions:
\begin{enumerate}
    \item \textit{Replace any 0 with a 1:}
    This is equivalent to the transition:
    \[
    \dots 1110 s 0 r 000\dots \to \dots 1110 s1r 000\dots
    \]
    where $s$ and $r$ are some finite strings.
    \item \textit{Delete up to and including the first 0 (in the abbreviated state):}
    This is equivalent to the transition:
    \[
    \dots 1110\underbrace{1\dots1}_k0s 000\dots \to \dots 1111\underbrace{1\dots1}_k0s 000\dots
    \]
    where $k$ is some non-negative integer, and $s$ is some finite string.
\end{enumerate}
We leave it to the reader to see that these rules apply in the example above.  We also provide some additional examples, as follows:
\begin{align*}
    001001 &\rightarrow 00100101 \\
    11000101 &\rightarrow 11100101 \\
    001001 &\rightarrow 01001 \\
    11000101 &\rightarrow 00101
\end{align*}

Finally, just as we rewrote a juggling pattern in $G_b$ to one in $G_\infty$, we can take a pattern in $G_\infty$ and write it in $G_b$ for some $b$.  To do so, we find the maximum number $b'$ of 1's for each state in the pattern.  Then, for any $b \geq b'$, we can rewrite the pattern in $G_b$.  For example, from \cref{ex:inf-sg-Ginf-juggling-ex} above, the first time the pattern $1 \rightarrow 10001 \rightarrow 001 \rightarrow 101 \rightarrow 1$ occurs is in $G_2$ as $101 \rightarrow 010001 \rightarrow 10001 \rightarrow 0101 \rightarrow 101$. Notice that  to obtain the pattern in $G_2$, we added to the beginning the difference in 1's followed immediately by a padding 0.

\subsection{2-Ball Base State} \label{sec:base-state2}

We count the number of prime 2-ball juggling patterns containing the state $\langle1,1\rangle$.

\begin{theorem} \label{thm:base-state2-count-11}
    \[
    B(n) = \sum_t \sum_{\substack{p_1 > \ldots > p_t \geq 1 \\ p_1 + \ldots + p_t = n - t}}
    \frac1{t+1} \prod_{i=1}^t \left(\frac{i + 1}{i}\right)^{p_i}
    + \sum_t \sum_{\substack{p_1 + \ldots > p_t \geq 1 \\ p_1 + \ldots + p_t = n - t - 1}}
    \frac1{t+1} \prod_{i=1}^t \left(\frac{i + 1}{i}\right)^{p_i}
    \]
\end{theorem}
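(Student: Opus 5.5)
The plan is to reduce the count to the set model of \cref{lem:pattern-sets-bijection} and then use two shift bijections. By that lemma, a prime $2$-ball pattern of length $n$ with $t$ cards $C_2$ is the same thing as a cyclic sequence $(S_1,\dots,S_t)$ of pairwise disjoint finite sets of positive integers (the spacings) with $\sum_i \max(S_i)=n$. The first step is to show that the pattern visits $\langle 1,1\rangle$ exactly when the spacing $1$ occurs, that is, when $1\in S_i$ for some $i$. Spacing $j$ occurring means the walk passes through $\langle 1,0,\dots,0,1\rangle$ with the two ones $j$ apart; for $j=1$ this state is $\langle 1,1\rangle$. Disjointness then forces $1$ to lie in exactly one set. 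So $B(n)$ counts cyclic sequences in $\bigcup_t\mathcal{X}_{t,n}$ in which $1$ appears, and the set containing $1$ is canonically distinguished.

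Because one set is distinguished, we can rotate it to the front. Cyclic sequences containing a distinguished set are then in bijection with linear sequences whose first set contains $1$. We split into two cases.

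\emph{Case 1: $1\in S_1$ and $S_1\neq\{1\}$.} Delete the element $1$ and subtract $1$ from every element of every set. This gives $t$ nonempty, pairwise disjoint sets of positive integers whose maxima sum to $n-t$. The map is reversible: add $1$ to every element and put $1$ back into the first set. The images are exactly the linear orderings of elements of $\mathcal{X}_{t,n-t}$ with a chosen starting set, and there are $t\cdot c_t(n-t)$ of these. The Proposition gives the weight $\frac{1}{t(t+1)}\prod_i(\frac{i+1}{i})^{p_i}$, and the extra factor $t$ turns $\frac{1}{t(t+1)}$ into $\frac1{t+1}$. This produces the first sum.

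\emph{Case 2: $S_1=\{1\}$.} Remove $S_1$. The remaining $t'=t-1$ sets avoid $1$ and have maxima summing to $n-1$. Shift them down by $1$ to get an element of $\mathcal{X}_{t',\,n-1-t'}$. To reverse, we must reinsert $\{1\}$ into the cyclic arrangement of the $t'$ sets, and there are $t'$ gaps for it. The count is therefore $t'\,c_{t'}(n-1-t')$, which again gives the factor $\frac1{t'+1}$; this is the second sum with index $t'$ renamed $t$. The degenerate pattern consisting of $\{1\}$ alone only arises when $n=1$, so I would note it separately rather than let it interfere.

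I expect the main obstacle to be the first step: verifying rigorously, from the construction behind \cref{lem:pattern-sets-bijection}, that "visits $\langle1,1\rangle$" is equivalent to "spacing $1$ lies in some $S_i$". It also has to be checked that the shifted sets still meet every condition of the lemma, including the requirement that every element of a set $S_i$ is realized as a spacing in that segment. I would try to settle this by recalling how Banaian et al.\ read off $S_i$ as the set of spacings visited between consecutive $C_2$ cards. The remaining work is bookkeeping, namely that rotation, marking and the shift are mutually inverse.
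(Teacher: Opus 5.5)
Your proposal is correct and follows essentially the paper's route: split according to whether the spacing $1$ (equivalently, the state $\langle 1,1\rangle$) appears as the singleton set $\{1\}$ or as a non-maximal element of a larger set, and shift every spacing down by one to land in $\mathcal{X}_{t,n-t}$ or $\mathcal{X}_{t',n-1-t'}$, with the factor $t$ coming from the choice of distinguished set. Your explicit bijections on linear sequences are cleaner than the paper's manipulation of partition weights, and your matching of cases to sums (non-singleton $\leftrightarrow$ $n-t$, singleton $\leftrightarrow$ $n-t-1$) is the right one, whereas the paper's text labels the two cases the other way round. Your caveat about $n=1$ is also warranted: the displayed formula gives $B(1)=1$ only if the empty partition at $t=0$ is admitted in the second sum.
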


\begin{proof}
    Let us count the number of prime cycles of length $n$.
    
    In the typical fashion, we partition $n$ into the maximal spacings within the period.
    
    Since we must include $\langle 1; 1 \rangle$ as a state (we refer to it as the ``base state''), there must be a state that includes a spacing of 1. This can either be the maximal spacing of its set or a non-maximal spacing.
    
    In the first case, suppose that it is the maximal spacing. Then, the partition of $n$ must have minimal element 2, so, with the same method of counting the lesser set elements as previously, we count:
    \[\sum_t \sum_{\substack{p_1 + \ldots > p_t \geq 2 \\ p_1 + \ldots + p_t = n}} \frac1{t + 1} \prod_{i = 1}^t \left( \frac{i + 1}i \right)^{p_i} \]
    The bounds on the sum can be adjusted by taking for granted that at least one element has already been added to each summand of the partition, and then partitioning the remaining elements of $n$. This is equivalent to partitioning $n - t$, as there are $t$ sets.
    \[\sum_t \sum_{\substack{p_1 + \ldots > p_t \geq 1 \\ p_1 + \ldots + p' = n - t}} \frac{1}{t + 1} \prod_{i = 1}^t \left( \frac{i + 1}{i} \right)^{p_i} \]
    
    In the second case, in which 1 is a lesser spacing, we still may not include any sets with a maximum spacing 1. The length is partitioned without sets of maximum spacing 1, and without the throw that results in the base state, which is added in at the end. This yields the following count:
    \[\sum_t \sum_{\substack{p_1 + \ldots > p_t \geq 1 \\ p_1 + \ldots + p_t = n - t - 1}} \frac1{t + 1} \prod_{i = 1}^t \left( \frac{i + 1}i \right)^{p_i} \]
    
The sum of these sums covers all the cases in which the base state could occur within a prime 2-ball pattern.
\end{proof}

From this theorem, we can express the count $B(n)$ more compactly using the function $c_t(n)$. Note that the inner sums in \cref{thm:base-state2-count-11} differ from $c_t(m)$ only by a factor of $t$.

\begin{corollary}\label{cor:base-state-formula}
The number of 2-ball prime patterns containing the base state $\langle 1, 1 \rangle$ is given by
\[
B(n) = \sum_{t=1}^\infty t \cdot c_t(n-t) + \sum_{t=1}^\infty t \cdot c_t(n-t-1)
\]
\end{corollary}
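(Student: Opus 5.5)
The plan is to derive the corollary directly from \cref{thm:base-state2-count-11} by matching each inner sum with the definition of $c_t$ in \cref{def:c_t}. That definition reads
\[
c_t(m) = \sum_{\substack{p_1>\cdots>p_t \ge 1 \\ p_1+\cdots+p_t=m}} \frac{1}{t(t+1)}\prod_{i=1}^t\Big(\frac{i+1}{i}\Big)^{p_i}.
\]
So for each fixed $t$, the inner sum $\sum_{p}\frac{1}{t+1}\prod_i\big(\frac{i+1}{i}\big)^{p_i}$ over partitions of $m$ into $t$ distinct parts equals exactly $t\cdot c_t(m)$. The only change is that the weight $\frac{1}{t+1}$ equals $t\cdot\frac{1}{t(t+1)}$, which can be pulled out of the sum because it does not depend on the $p_i$.

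First, I apply this identity to the first double sum of \cref{thm:base-state2-count-11} with $m=n-t$. This gives $\sum_t t\,c_t(n-t)$.

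Second, I apply it to the second double sum with $m=n-t-1$. The summation condition is printed there as ``$p_1+\ldots>p_t$'', and I read it as the intended $p_1>\cdots>p_t\ge 1$, consistent with the first sum and with the proof of the theorem. Under that reading the second sum becomes $\sum_t t\,c_t(n-t-1)$.

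Finally, I address the range of $t$ and the edge cases. Whenever $m\le 0$, or $m$ is too small to be a sum of $t$ distinct positive parts (that is, $m<t(t+1)/2$), the indexing set is empty. I adopt the convention $c_t(m)=0$ for such $m$, including $m\le 0$, where \cref{def:c_t} is not formally stated. Both sides then vanish termwise, so extending $t$ from the implicit range in the theorem to $t=1,\dots,\infty$ changes nothing, and each series is in fact a finite sum.

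None of these steps is genuinely hard. The only point needing care is this boundary convention, since \cref{def:c_t} is stated only for $n\ge 1$; I will state the convention explicitly so that the displayed identity holds for every $n$.
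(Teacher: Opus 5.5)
Your proposal is correct and is exactly the paper's argument. The paper obtains the corollary from \cref{thm:base-state2-count-11} by the single observation that $\frac{1}{t+1}=t\cdot\frac{1}{t(t+1)}$, so the inner sums are $t\,c_t(n-t)$ and $t\,c_t(n-t-1)$, and your empty-sum convention for arguments below $t(t+1)/2$ (including $m\le 0$) is the right bookkeeping.

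One caution on your claim that the identity ``holds for every $n$'': that is true for the equality between the two expressions, but as a count of patterns the formula, already in the theorem, fails at $n=1$. The one-beat pattern $\langle 1,1\rangle\to\langle 1,1\rangle$ (throw $2$, spacing set $S_1=\{1\}$, counted by $c_1(1)=1$) contains the base state, while the right-hand side is $0$, so the count statement is only valid for $n\ge 2$.
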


\subsection{Asymptotics}

We now determine the asymptotic behavior of $B(n)$ by applying the General Asymptotic Theorem (\cref{thm:general_asymptotics}).

We observe that for large $n$, the shifted term $c_t(n-k)$ behaves asymptotically as a scaled version of $c_t(n)$. Specifically, since $c_t(n) \sim q_t 2^n$, we have $c_t(n-k) \sim q_t 2^{n-k} = 2^{-k} (q_t 2^n) \sim 2^{-k} c_t(n)$.

This allows us to interpret the sums in \cref{cor:base-state-formula} as weighted sums of the form $F_w(n) = \sum w(t) c_t(n)$ with modified weights.
\begin{itemize}
    \item For the first term, $\sum t \cdot c_t(n-t)$, the weight is $w_1(t) = t \cdot 2^{-t}$.
    \item For the second term, $\sum t \cdot c_t(n-t-1)$, the weight is $w_2(t) = t \cdot 2^{-(t+1)}$.
\end{itemize}

We now compute the asymptotic constants $\gamma_{w_1}$ and $\gamma_{w_2}$ as defined in \cref{thm:general_asymptotics}.
\begin{align*}
\gamma_{w_1} &= \sum_{t=1}^\infty w_1(t) q_t = \sum_{t=1}^\infty t 2^{-t} q_t \\
\gamma_{w_2} &= \sum_{t=1}^\infty w_2(t) q_t = \sum_{t=1}^\infty t 2^{-(t+1)} q_t = \frac{1}{2} \sum_{t=1}^\infty t 2^{-t} q_t = \frac{1}{2} \gamma_{w_1}
\end{align*}
The total asymptotic constant is $\gamma_{B} = \gamma_{w_1} + \gamma_{w_2} = \frac{3}{2} \gamma_{w_1}$.

\begin{theorem}
The number of 2-ball prime cycles containing the base state $\langle 1, 1 \rangle$ satisfies
\[
B(n) \sim \gamma_{B} 2^n
\]
where
\[
\gamma_{B} = \frac{3}{2} \sum_{t=1}^\infty \frac{t q_t}{2^t} = \frac{3}{4} \sum_{t=1}^\infty t \left( \frac{1}{2^t} \prod_{i=2}^t \frac{i-1}{2^i - i - 1} \right).
\]
\end{theorem}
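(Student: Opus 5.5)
We will prove the asymptotic directly from \cref{cor:base-state-formula} together with the two-sided bounds of \cref{lem:ct_bounds}. We do not apply \cref{thm:general_asymptotics} as a black box. The reason is that the shifted terms $c_t(n-t)$ and $c_t(n-t-1)$ are not literally of the form $\sum_t w(t)c_t(n)$, so the heuristic ``$c_t(n-k)\sim 2^{-k}c_t(n)$'' has to be made uniform in $t$. We will instead repeat the limsup/liminf truncation argument of that theorem with shifted arguments. Throughout we use the convention $c_t(m)=0$ for $m\le 0$; the upper bound $c_t(m)\le q_t2^m$ then remains trivially true for such $m$.

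\textbf{Upper bound.} Since $c_t(m)\le q_t 2^m$ for every $t$ and $m$, we get
\[
B(n)\le \sum_{t\ge1} t\,q_t 2^{n-t}+\sum_{t\ge1} t\,q_t2^{n-t-1}=\tfrac32\Big(\sum_{t\ge1}\tfrac{t q_t}{2^t}\Big)2^n=\gamma_B 2^n .
\]
The series converges because $q_t$ decays superexponentially, since $q_t/q_{t-1}=(t-1)/(2^t-t-1)\to0$. Hence $\limsup_n B(n)/2^n\le\gamma_B$.

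\textbf{Lower bound.} Fix $\varepsilon>0$ and choose $K$ with $\tfrac32\sum_{t\le K} t q_t2^{-t}>\gamma_B-\varepsilon/2$. All terms are nonnegative, so we may drop every $t>K$. For $n>K+1$ every argument $n-t$ and $n-t-1$ with $t\le K$ is at least $1$, so the lower bound of \cref{lem:ct_bounds} applies and gives
\[
B(n)\ \ge\ \sum_{t=1}^K t\big(q_t2^{n-t}-r_t\sqrt3^{\,n-t}\big)+\sum_{t=1}^K t\big(q_t2^{n-t-1}-r_t\sqrt3^{\,n-t-1}\big)
\ \ge\ \big(\gamma_B-\tfrac{\varepsilon}{2}\big)2^n-\Big(2\sum_{t\le K} t r_t\Big)\sqrt3^{\,n}.
\]
The coefficient of $\sqrt3^{\,n}$ is a finite constant depending only on $K$. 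Since $(\sqrt3/2)^n\to0$, for $n$ large we get $B(n)\ge(\gamma_B-\varepsilon)2^n$. Therefore $\liminf_n B(n)/2^n\ge\gamma_B$, and combining with the upper bound gives $B(n)\sim\gamma_B2^n$.

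\textbf{Closed form of the constant.} Substituting $q_t=\tfrac12\prod_{i=2}^t\frac{i-1}{2^i-i-1}$ from \cref{def:q_t} into $\gamma_B=\frac32\sum_t tq_t2^{-t}$ gives the stated expression with prefactor $\frac34$.

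\textbf{Main obstacle.} The only delicate point is the interchange of limit and sum over $t$, i.e.\ uniformity in $t$ of the shift heuristic. The plan handles it by using the exact inequality $c_t(m)\le q_t2^m$ for all $t$ on the upper side, and a finite truncation on the lower side. Beyond that we rely on \cref{cor:base-state-formula} as given; if that formula were off by a shift or a factor, only the constant $\gamma_B$ would change, not the method.
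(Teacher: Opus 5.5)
Your proof is correct. It differs from the paper's in how the limit is justified, and the difference is one of rigor rather than of ideas. The paper's proof is a one-line appeal to \cref{thm:general_asymptotics}. It uses the heuristic $c_t(n-k)\sim 2^{-k}c_t(n)$ to reinterpret $\sum_t t\,c_t(n-t)$ and $\sum_t t\,c_t(n-t-1)$ as $F_{w}(n)$ with weights $w_1(t)=t2^{-t}$ and $w_2(t)=t2^{-(t+1)}$. That is not literally an instance of the theorem, which concerns $\sum_t w(t)c_t(n)$ at a single common argument $n$. Because the shift grows with $t$, passing the limit through the sum needs exactly the uniformity in $t$ that the heuristic glosses over. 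You instead rerun the theorem's limsup/liminf truncation argument on the shifted sums themselves. This works because, for $n-t\ge 1$, the shifted terms obey a two-sided bound of the same shape, $q_t2^{-t}\cdot 2^n-r_t\sqrt{3}^{\,n}\le c_t(n-t)\le q_t2^{-t}\cdot 2^n$, and such bounds are all that the theorem's proof ever uses. The paper's version buys brevity and reuse of the general machinery. Yours buys an honest justification of the interchange of limit and sum, explicit handling of arguments $n-t\le 0$, and the exact inequality $B(n)\le\gamma_B 2^n$ for every $n$. In effect, you carry out in this instance the generalization of \cref{thm:general_asymptotics} to any nonnegative family $f_t(n)$ satisfying bounds of this form, which is what the paper's shortcut would need in order to be rigorous.
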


\begin{proof}
    Applying \cref{thm:general_asymptotics} to the weighted sums described above, we obtain:
    \begin{align*}
        B(n) &\sim \gamma_{w_1} 2^n + \gamma_{w_2} 2^n \\
        &= \left( \sum_{t=1}^\infty \frac{t q_t}{2^t} + \frac{1}{2} \sum_{t=1}^\infty \frac{t q_t}{2^t} \right) 2^n \\
        &= \frac{3}{2} \left( \sum_{t=1}^\infty \frac{t q_t}{2^t} \right) 2^n
    \end{align*}
\end{proof}

\subsection{Fixed State} \label{sec:fixed-state}

Define a flip-reverse function $\FR(\alpha)$ that takes in a state $\alpha \in G_\infty$ and returns another state $\alpha' \in G_\infty$.  To perform the function, we look at $\alpha$ as an infinite state.  Then, we flip all digits so that 0's become 1's, and vice versa.  Finally, we reverse the order of the digits in the state.  An example is as follows: 
\begin{example}
    Consider the state $\alpha = 0011$.
    \begin{align*}
        \alpha = 0011 & \Rightarrow \ldots 110001100 \ldots && \text{ (expand state)} \\
        & \Rightarrow \ldots 001110011 \ldots && \text{ (flip all digits)} \\
        & \Rightarrow \ldots 110011100 \ldots && \text{ (reverse)} \\
        & \Rightarrow 0111 = \alpha' && \text{ (abbreviate state)} 
    \end{align*}
\end{example}

Consider two states $\alpha$ and $\beta$.  We say that $\FR(\alpha) = \beta$ if after performing the flip-reverse function on $\alpha$, we have the state $\beta$.  Also, notice that if we perform the flip-reverse function on $\FR(\alpha)$, then we have $\alpha$; that is, $\FR(\FR(\alpha)) = \alpha$.  Therefore, the flip-reverse function is an involution.

\begin{lemma} \label{lem:fr-states-bijection}
    The flip-reverse function $\FR(\alpha)$ is bijective between states of the infinite state graph $G_\infty$.
\end{lemma}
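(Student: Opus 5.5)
The plan is to show first that $\FR$ is well defined as a map from states of $G_\infty$ to states of $G_\infty$. Bijectivity will then follow at once from the involution property $\FR(\FR(\alpha)) = \alpha$ noted just before the statement, since a map that is its own inverse is automatically injective and surjective.

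\textbf{Setting up the expanded form.} A state of $G_\infty$ is, by the abbreviation convention, a finite binary string $s$. Its expanded form is the bi-infinite word $\ldots 111\,0\,s\,000\ldots$. In other words, it is a bi-infinite binary word that is eventually all $1$'s to the left and eventually all $0$'s to the right. I will always treat such words up to shift.

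\textbf{Step 1: $\FR$ preserves the class of expanded states.} Flipping every digit turns the word into one that is eventually $0$ on the left and eventually $1$ on the right. Reversing it then gives a word that is again eventually $1$ on the left and eventually $0$ on the right. So the result is an expanded state.

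\textbf{Step 2: the abbreviation is unique.} Such a word has a unique position: the last $0$ before which every symbol is $1$, i.e. the first $0$ of the word. Deleting the infinite prefix of $1$'s together with that $0$, and the infinite suffix of $0$'s, produces a unique finite string. Hence $\FR(\alpha)$ is a well-defined abbreviated state. Note that the suffix of $0$'s is taken maximal, so abbreviated states carry no trailing zeros. For consistency I must check that this is the convention the paper uses. The example $0011 \mapsto 0111$ agrees with it.

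\textbf{Step 3: the involution and bijectivity.} Flip and reverse commute, and each is an involution on bi-infinite words. Therefore applying $\FR$ twice returns the original expanded word. Because the abbreviation is determined canonically by the expanded word, this gives $\FR(\FR(\alpha)) = \alpha$ on abbreviated states as well. It follows that $\FR$ is a bijection on the vertex set of $G_\infty$, with inverse $\FR$ itself.

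\textbf{Expected obstacle.} The only delicate point is Step 2: making sure that the abbreviate/expand correspondence is itself a bijection between finite strings and shift-classes of such bi-infinite words. The issue is ambiguity in the leading $1$'s and trailing $0$'s of $s$. The plan handles this by defining the abbreviation canonically via the first $0$ and the maximal zero suffix. I also need to confirm that the empty string corresponds to the ground state $\ldots 111000\ldots$, which $\FR$ fixes.
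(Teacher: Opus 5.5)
Your proposal is correct and takes the same route as the paper, whose entire proof is that $\FR$ is an involution and therefore a bijection. Your Steps 1--2 check that flip-then-reverse sends a word $\ldots 1110s000\ldots$ to another word of that form, and that the abbreviation is canonical, so $\FR$ is a well-defined self-map of the vertex set; the paper leaves this implicit, and it is exactly what makes the involution argument rigorous.
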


\begin{proof}
    The flip-reverse function is an involution.  By definition, it is also a bijection.
    %
\end{proof}

\begin{lemma} \label{lem:fr-transitions-automorphism}
    If $\alpha \rightarrow \beta $ is a valid transition in $G_\infty$, then $\FR(\beta) \rightarrow \FR(\alpha)$ is a valid transition in $G_\infty$.
\end{lemma}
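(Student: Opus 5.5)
I will argue directly from the two transition rules for $G_\infty$, working with the full bi-infinite representation of states. Write a state $\alpha$ as a bi-infinite binary sequence $(a_x)_{x\in\mathbb{Z}}$ that is all $1$'s far to the left and all $0$'s far to the right. The abbreviation is only a choice of origin, so states are sequences up to a fixed normalization of the position of the ``present'' beat. The first step is to restate both rules as a single condition analogous to the finite one ($\alpha_{i+1}\le\beta_i$). A transition $\alpha\to\beta$ is a one-beat shift followed by at most one change $0\mapsto 1$. Rule 2 is the case where that $0$ is the very first $0$, i.e.\ no throw beyond the mandatory landing. Rule 1 is the case where the first $0$ is absorbed into the all-ones prefix and a $1$ is placed elsewhere. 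Concretely, I would show: $\alpha\to\beta$ is valid iff the shifted sequence $\alpha^{\text{sh}}$ satisfies $\alpha^{\text{sh}}\le\beta$ coordinatewise and differs from $\beta$ in exactly one position. This works because the number of $1$'s is conserved relative to the shift: the ``infinitely many balls'' bookkeeping makes the count of positions where they differ exactly one.

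Next I would compute how $\FR$ interacts with this description. Flipping reverses the coordinatewise order: $x\le y$ iff $\bar x\ge\bar y$. Reversing the index direction turns a shift to the left into a shift to the right. Hence, if $\alpha^{\text{sh}}\le\beta$ with exactly one differing coordinate, then after flip-reverse we get $\FR(\beta)\le \FR(\alpha^{\text{sh}})$, still with exactly one differing coordinate. Moreover $\FR(\alpha^{\text{sh}})$ is $\FR(\alpha)$ shifted in the opposite direction. Undoing that shift on both sides gives $\FR(\beta)^{\text{sh}}\le\FR(\alpha)$ with exactly one difference. By the first step, this says precisely that $\FR(\beta)\to\FR(\alpha)$ is valid.

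The main obstacle is the normalization of the origin. In the abbreviated notation the leading $0$ after the prefix is deleted, so a state is really an equivalence class of sequences under translation. I have to check that ``shift by one, then compare'' is well defined on these classes, and that $\FR$ respects them. I plan to avoid this by working with translation classes throughout. Under that convention, $\alpha\to\beta$ means there exist representatives with $\alpha'\le\beta'$ differing in exactly one coordinate, and the shift is absorbed into the choice of representatives. Both the characterization and the $\FR$ computation then become symmetric statements about pairs of sequences, and the lemma follows with no index bookkeeping. As a sanity check I would verify the equivalence against the paper's examples, such as $001001\to01001$ and $11000101\to11100101$.
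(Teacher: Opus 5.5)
Your proof is correct and is essentially the paper's argument. The paper likewise observes that, in the expanded state, each transition (under either rule) replaces exactly one $0$ by a $1$. After flipping and reversing, $\FR(\alpha)$ is therefore obtained from $\FR(\beta)$ by replacing exactly one $0$ by a $1$, i.e.\ $\FR(\beta)\to\FR(\alpha)$. Your translation-class formulation ($\alpha'\le\beta'$ for some representatives, differing in exactly one coordinate) is a more careful, explicit version of that same characterization.

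Two small corrections to the heuristics. First, the one-beat-shift bookkeeping is unnecessary and can be dropped. Second, your description of the rules is slightly off: it is Rule~2, not Rule~1, that turns the padding $0$ into a $1$ and merges it into the prefix, while Rule~1 changes a later $0$ and leaves the padding $0$ in place.
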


First, we provide some intuition for \cref{lem:fr-transitions-automorphism}.  Consider a transition $\alpha \rightarrow \beta$ in $G_\infty$.  Recall the two rules for transitions:  replace a 0 with a 1, or delete up to and including the first 0 in the abbreviated state.  However, if we consider expanded states, the second rule is exactly the first, but in this instance, we replace the padding 0.  We can see this more clearly in the following example:
\begin{align*}
    11000101 \hspace{27.8pt} & \rightarrow \hspace{51pt} 00101 \\
    \Updownarrow \hspace{50pt} & \hspace{75pt} \Updownarrow \\
    \ldots 11{\textcolor{red}{0}}1100010100 \ldots &\rightarrow \ldots 11{\textcolor{red}{1}}1100010100 \ldots
\end{align*}

Then, given that transitions are formed replacing any one 0 with a 1, then it must be the case that $\beta$ contains one ``more'' 1 than $\alpha$.  It also must be the case that $\FR(\beta)$ contains one ``more'' 0 than $\FR(\alpha)$.  Therefore, the automorphic transition to $\alpha \rightarrow \beta$ is $\FR(\beta) \rightarrow \FR(\alpha)$.

\begin{proof}[Proof of \cref{lem:fr-transitions-automorphism}]
    Consider a transition $\alpha \rightarrow \beta$ in $G_\infty$.  By the transition rules, it must be the case that exactly one 0 in $\alpha$ was replaced with a 1 in $\beta$.  Now, consider $\FR(\alpha)$ and $\FR(\beta)$.  The only difference between the two states is exactly one digit.  Furthermore, that digit must be a 1 in $\FR(\alpha)$ that was replaced with a 0 in $\FR(\beta)$.  Therefore, $\FR(\beta) \rightarrow \FR(\alpha)$ is a valid transition.
    %
\end{proof}

We now give an example of \cref{lem:fr-transitions-automorphism}.  Consider the state $\alpha = 01$, which transitions to the state $\beta = 1$.  Using the flip-reverse function, we have the states $\alpha ' = \FR(\alpha) = 11$ and $\beta ' = \FR(b) = 1$.  It is clear that $\beta ' \rightarrow \alpha '$ is a valid transition.

\begin{theorem} \label{thm:fr-walks-bijection}
    Consider two states $\alpha$ and $\beta = \FR(\alpha)$ in the infinite state graph $G_\infty$.  Let $A$ be the set containing all $k$-length walks that pass through $\alpha$, and let $B$ be the set containing all $k$-length walks that pass through $\beta$.  Then, there exists a bijection between the walks in set $A$ and the walks in set $B$.
\end{theorem}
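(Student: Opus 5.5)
We would construct the bijection explicitly by applying $\FR$ to every state of a walk and reversing the order in which the states are visited. Concretely, for a walk
\[
W = (\gamma_0 \to \gamma_1 \to \cdots \to \gamma_k)
\]
in $G_\infty$, define
\[
\Phi(W) = (\FR(\gamma_k) \to \FR(\gamma_{k-1}) \to \cdots \to \FR(\gamma_0)).
\]
If the walks in question are closed (juggling patterns, with $\gamma_k = \gamma_0$), $\Phi(W)$ is again closed of the same length. If they are open walks, the same definition still makes sense.

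The first step is to check that $\Phi(W)$ is a walk of length $k$. Each edge $\gamma_{i} \to \gamma_{i+1}$ of $W$ yields the edge $\FR(\gamma_{i+1}) \to \FR(\gamma_i)$ by \cref{lem:fr-transitions-automorphism}. Hence every consecutive pair in $\Phi(W)$ is a valid transition, and the number of edges is unchanged.

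The second step is to check that $\Phi$ maps $A$ into $B$. If $\alpha = \gamma_j$ for some $j$, then $\FR(\alpha) = \beta$ appears in $\Phi(W)$ at position $k-j$.

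The third step is bijectivity. Since $\FR$ is an involution (\cref{lem:fr-states-bijection}), applying $\Phi$ twice reverses the order twice and applies $\FR$ twice, so $\Phi \circ \Phi$ is the identity on all $k$-walks. Because $\FR(\beta) = \alpha$, the same map sends $B$ into $A$. Thus $\Phi|_A$ and $\Phi|_B$ are mutually inverse, which gives the bijection.

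The main obstacle is making \cref{lem:fr-transitions-automorphism} airtight, since the proof above rests entirely on it. The lemma as proved argues via the digit that changes, and we would recheck it in the expanded infinite-string picture. In that picture a transition replaces exactly one $0$ by a $1$; the ``delete up to the first 0'' rule is also of this form once the padding $0$ is counted, and since states are equivalence classes of bi-infinite strings under shift, no shift is involved. Flipping the digits turns this into replacing exactly one $1$ by a $0$, and reversing the string preserves this property. Read backwards, such a change is again a replacement of one $0$ by a $1$, so $\FR(\beta) \to \FR(\alpha)$ is a legal transition.

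Two side points also need care: that $\FR$ of an abbreviated state is again a valid state (eventually $1$ on the left and $0$ on the right, which is true because reversal swaps the two tails and flipping exchanges their values), and that the statement counts walks as sequences (or as cycles up to rotation), both of which $\Phi$ respects. If walks are taken up to rotation, we would note that $\Phi$ commutes with rotation, with the direction of rotation reversed, so it descends to equivalence classes.
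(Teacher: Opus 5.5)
Your proposal is correct and follows essentially the same route as the paper: apply $\FR$ to every state of the walk and reverse the direction of the edges, using \cref{lem:fr-transitions-automorphism} to see that the result is again a $k$-length walk, now through $\beta$. Your observation that this map $\Phi$ is an involution, so that $\Phi|_A$ and $\Phi|_B$ are mutually inverse, finishes the argument more cleanly than the paper's step ``injection, plus surjection by repeating the process for $B$,'' which tacitly relies on that same fact.
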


\begin{proof}
    Consider a walk $P_\alpha$, where $P_\alpha \in A$.  By \cref{lem:fr-states-bijection}, for each state $\alpha _i$ in $P_\alpha$, there exists a corresponding state $\beta _i = \FR(\alpha_i)$, which is unique.  By \cref{lem:fr-transitions-automorphism}, for each transition $\alpha _i \rightarrow \alpha _{i + 1}$ in $P_\alpha$, there exists a corresponding transition $\beta _{i + 1} \rightarrow \beta _i$.  Therefore, it must be the case that for each $P_\alpha \in A$, there exists a corresponding walk $P_\beta \in B$ that is unique.  Therefore, there exists an injection between sets $A$ and $B$.  
    
    Since this process can be repeated for any walk in $B$, there exists a surjection between the sets.  Since there exist both an injection and a surjection between the two sets, there exists a bijection between set $A$ and set $B$.
    
\end{proof}

%

\section*{Acknowledgments}
This research was conducted at the mathematics 2024 REU program held at Iowa State University, which was supported by the National Science Foundation under Grant No. DMS-1950583.

\printbibliography[title={References}]

\end{document}

%% file: paper-pics.tex
\def\introstategraph{
\begin{tikzpicture}
\node at (0,0) {$\langle0,1,0,1\rangle$};
\node at (3,0) {$\langle1,0,1,0\rangle$};
\node at (6,0) {$\langle1,1,0,0\rangle$};
\node at (10,0) {$\langle0,0,1,1\rangle$};
\node at (8,1.5) {$\langle1,0,0,1\rangle$};
\node at (8,-1.5) {$\langle0,1,1,0\rangle$};
\draw[->, thick] (1,0.125)--(2,0.125);
\draw[->, thick] (2,-0.125)--(1,-0.125);
\draw[->, thick] (4,0.125)--(5,0.125);
\draw[->, thick] (5,-0.125)--(4,-0.125);
\draw[->,thick,rounded corners] (7,-0.125)--(7.25,-0.125)--(7.25,0.125)--(7,0.125);
\draw[->,thick] (8,1.125)--(8,-1.125);
\draw[->,thick] (6.5,0.375)--(7.5,1.125);
\draw[->,thick] (7.5,-1.125)--(6.5,-0.375);
\draw[->,thick] (8.5,1.125)--(9.5,0.375);
\draw[->,thick] (9.5,-0.375)--(8.5,-1.125);
\draw[->,thick] (7,1.5)--(3.5,0.375);
\draw[->,thick] (3.5,-0.375)--(7,-1.5);
\node at (1.5,0.35) {\small $0$};
\node at (1.5,-0.35) {\small $4$};
\node at (4.5,0.35) {\small $1$};
\node at (4.5,-0.35) {\small $3$};
\node at (7.45,0) {\small $2$};
\node at (8.2,0) {\small $2$};
\node at (6.85,0.95) {\small $4$};
\node at (6.85,-0.95) {\small $0$};
\node at (9.15,0.95) {\small $4$};
\node at (9.15,-0.95) {\small $0$};
\node at (5.2,1.15) {\small $1$};
\node at (5.2,-1.15) {\small $3$};
\end{tikzpicture}}

\def\cardA#1#2{
\begin{scope}[shift={(#1,#2)}]
    \begin{scope}
        \clip (0,0) rectangle (1,1.33);
        \draw[ultra thick, color=red!50!black] (0,1)--(1,1);
        \draw[ultra thick, color=red!50!black] (0,.66)--(1,.66);
    \end{scope}
    \draw[thick,rounded corners] (0,0) rectangle (1,1.33);
\end{scope}
}

\def\cardB#1#2{
\begin{scope}[shift={(#1,#2)}]
    \begin{scope}
        \clip (0,0) rectangle (1,1.33);
        \draw[ultra thick, color=red!50!black] (0,1)--(1,1);
        \draw[ultra thick, color=red!50!black] (0,.66) to[out=0,in=90] (0.5,0) to[out=90,in=180] (1,.66);
    \end{scope}
    \draw[thick,rounded corners] (0,0) rectangle (1,1.33);
\end{scope}
}

\def\cardC#1#2{
\begin{scope}[shift={(#1,#2)}]
    \begin{scope}
        \clip (0,0) rectangle (1,1.33);
        \draw[ultra thick, color=red!50!black] (0,1) to[out=0,in=180] (1,0.66);
        \draw[line width=4pt, color=white] (0,.66) to[out=0,in=90] (0.5,0) to[out=90,in=180] (1,1);
        \draw[ultra thick, color=red!50!black] (0,.66) to[out=0,in=90] (0.5,0) to[out=90,in=180] (1,1);
    \end{scope}
    \draw[thick,rounded corners] (0,0) rectangle (1,1.33);
\end{scope}
}
\def\cardD#1#2{
\begin{scope}[shift={(#1,#2)}]
    \begin{scope}
        \clip (0,0) rectangle (1,1.33);
        \draw[ultra thick, color=red!50!black] (0,1) to[out=0,in=180] (1,0.83);
        \draw[ultra thick, color=red!50!black] (0,.66) to[out=0,in=90] (0.5,0);
        \draw[ultra thick, color=red!50!black] (0.5,0) to[out=90,in=180] (1,0.83);
    \end{scope}
    \draw[thick,rounded corners] (0,0) rectangle (1,1.33);
\end{scope}
}

\def\cardE#1#2{
\begin{scope}[shift={(#1,#2)}]
    \begin{scope}
        \clip (0,0) rectangle (1,1.33);
        \draw[line width=3pt, color=red!50!black] (0,.83)--(1,0.83);
    \end{scope}
    \draw[thick,rounded corners] (0,0) rectangle (1,1.33);
\end{scope}
}

\def\cardF#1#2{
\begin{scope}[shift={(#1,#2)}]
    \begin{scope}
        \clip (0,0) rectangle (1,1.33);
        \draw[line width=3pt, color=red!50!black] (0,.83) to[out=0,in=90] (0.5,0);
        \draw[ultra thick, color=red!50!black] (0.5,0) to[out=90,in=180] (1,.66);
        \draw[ultra thick, color=red!50!black] (0.5,0) to[out=90,in=180] (1,1);
    \end{scope}
    \draw[thick,rounded corners] (0,0) rectangle (1,1.33);
\end{scope}
}

\def\cardG#1#2{
\begin{scope}[shift={(#1,#2)}]
    \begin{scope}
        \clip (0,0) rectangle (1,1.33);
        \draw[line width=3pt, color=red!50!black] (0,.83) to[out=0,in=90] (0.5,0);
        \draw[line width=3pt, color=red!50!black] (1,.83) to[out=180,in=90] (0.5,0);
    \end{scope}
    \draw[thick,rounded corners] (0,0) rectangle (1,1.33);
\end{scope}
}

\def\ucycle#1#2#3#4#5{
\begin{scope}[shift={(#1,#2)}]
    \def\n{#3}
    \def\r{#4}
    \pgfmathtruncatemacro{\shift}{#5}

    \foreach \i in {1,...,\n} {
        \pgfmathtruncatemacro{\position}{mod(\i + \shift - 1, \n) + 1}
        \node (S\i) at ({90 - 360 / \n * (\position - 1)}:\r) {$S_{\i}$};
    }
\end{scope}
}

\def\unode#1#2#3#4#5{
\begin{scope}[shift={(#1,#2)}]
    \def\n{#3}
    \def\r{#4}
    \pgfmathtruncatemacro{\shift}{mod(#5, \n) + 1}

    \draw[violet, thick] ({90 - 360 / \n * (\shift - 1)}:\r) circle (0.37);
\end{scope}
}

\def\dnode#1#2#3#4#5{ 
\begin{scope}[shift={(#1,#2)}]
    \def\n{#3}
    \def\r{#4}
    \pgfmathtruncatemacro{\shift}{mod(#5, \n) + 1}
    \pgfmathsetmacro{\halfangle}{180 / \n}

    \draw[green, thick] ({90 - 360 / \n * \shift + \halfangle}:\r) circle (0.37);
\end{scope}
}

\def\ndnode#1#2#3#4#5{ 
\begin{scope}[shift={(#1,#2)}]
    \def\n{#3}
    \def\r{#4}
    \pgfmathtruncatemacro{\shift}{mod(#5, \n) + 1}
    \pgfmathsetmacro{\halfangle}{180 / \n}
    \pgfmathsetmacro{\x}{90 - 360 / \n * \shift + \halfangle}
    \pgfmathsetmacro{\xpos}{\r * cos(\x)}
    \pgfmathsetmacro{\ypos}{\r * sin(\x)}

    \def\crosssize{0.25}

    \draw[red, thick] (\xpos-\crosssize, \ypos-\crosssize) -- (\xpos+\crosssize, \ypos+\crosssize);
    \draw[red, thick] (\xpos-\crosssize, \ypos+\crosssize) -- (\xpos+\crosssize, \ypos-\crosssize);
\end{scope}
}

\def\multiplexexample{
    \begin{scope}
        \pgfmathsetmacro{\width}{8};
        \pgfmathsetmacro{\height}{5};
        
        \foreach \x in {0,...,\width}
            \draw[color=black!40!white, dotted, thick] (\x,0)--(\x,\height);
        \foreach \y in {0,...,\height}
            \draw[color=black!40!white, dotted, thick] (-0.5,\y)--(\width+0.5,\y);
        
        \draw[thick,color=black!100!white] (3,2)--(3,5)  (5,0)--(5,2)  (7,0)--(7,5)  (3,5)--(8,0)  (3,2)--(5,0)  (5,2)--(7,0)  (7,5)--(8,4)  (8,0)--(8,4);
        
        \draw[line width = 0.8pt, color=black!100!white,double] (-0.5,3.5)--(3,0)  (3,0)--(3,2)  (8,4)--(8.5,3.5);
        
        \foreach \pair in {(0,3),(1,2),(2,1),(8,4)}
            \draw[fill=black!20!white,thick] \pair circle (0.1) \pair circle (0.175) ;
        
        \foreach \pair in {(3,5),(3,2),(4,4),(4,1),(5,3),(5,2),(6,2),(6,1),(7,5),(7,1)}
            \draw[fill=black!20!white,thick] \pair circle (0.125);
        
    \end{scope}
}

\def\coloredexample{
    \begin{scope}
        \pgfmathsetmacro{\width}{9};
        \pgfmathsetmacro{\height}{5};
        
        \foreach \x in {0,...,\width}
            \draw[color=black!40!white, dotted, thick] (\x-1,0)--(\x-1,\height);
        \foreach \y in {0,...,\height}
            \draw[color=black!40!white, dotted, thick] (-0.5-1,\y)--(\width+0.5-1,\y);
        
        \draw[line width = 1.4 pt,color=gray!20!orange] (-1.5,1.5)--(0,0)--(0,4)--(4,0)--(4,1)--(5,0)--(5,2)--(7,0)--(7,3)--(8.5,1.5);
        
        \draw[line width = 1.4 pt,color=blue!60!white] (-1.5,3.5)--(2,0)--(2,1)--(3,0)--(3,5)--(8,0)--(8,4)--(8.5,3.5);
        
        \foreach \pair in {(0,4),(1,3),(2,2),(3,1),(4,1),(5,2),(6,1),(7,3),(8,2),(-1,1)}
            \draw[fill=gray!20!orange,thick] \pair circle (0.125);
        
        \foreach \pair in {(0,2),(1,1),(2,1),(3,5),(4,4),(5,3),(6,2),(7,1),(8,4),(-1,3)}
            \draw[fill=blue!60!white,thick] \pair circle (0.125);
    \end{scope}
}